\DeclareMathAlphabet\oldmathcal{OMS}        {cmsy}{b}{n}
\SetMathAlphabet    \oldmathcal{normal}{OMS}{cmsy}{m}{n}
\DeclareMathAlphabet\oldmathbcal{OMS}       {cmsy}{b}{n}
\newtheorem{theorem}{Theorem}[section]
\newtheorem{lemma}[theorem]{Lemma}
\newtheorem{corollary}[theorem]{Corollary}
\newtheorem{proposition}[theorem]{Proposition}
\newtheorem{prop_def}[theorem]{Definition/Proposition}
\newenvironment{example}{\medskip \refstepcounter{theorem}
	\noindent  {\bf Example \thetheorem}.\rm}{\,}
\newtheorem*{ack}{Acknowledgements}
\theoremstyle{remark}
\theoremstyle{definition}
\newtheorem{definition}[theorem]{Definition}
\newtheorem{remark}[theorem]{Remark}
\numberwithin{equation}{section}
\def\mS{\mathcal{S}} 
\def\bfV{\mbox{vol}} \def\bfS{\mbox{{\bf S}}}  
\def\del{\partial}
\def\kt{\mathfrak{t}}
\def\vol{\varpi}
\def\ra{\rightarrow }
\newcommand{\R}{\mathbb{R}}
\newcommand{\Z}{\mathbb{Z}}
\def\mL{\mathcal{L}}
\def\bT{\mathbb T}
\def\fract#1#2{\raise4pt\hbox{$ #1 \atop #2 $}}
\def\M{ W}
\newcommand{\C}{\mathbb{C}}
\def\Pot{ G }
\def\pot{ u }
\newcommand{\Q}{\mathbb{Q}}
\newcommand{\dd}{\partial \bar{\partial}}
\newcommand{\p}{\partial}
\newcommand{\tx}{\tilde{x}}
\newcommand{\bl}{\hat{\ell}}
\newcommand{\tell}{\tilde{\ell}}
\newcommand{\bv}{\hat{v}}
\newcommand{\bD}{\hat{D}}
\newcommand{\betaCONE}{\mathcal{B}}
\begin{document}
	
\title[Toric Sasaki-Einstein metrics]{Toric Sasaki-Einstein metrics with conical singularities}
\author[M. de Borbon]{Martin de Borbon}
\address{LMJL, Universit\'e de Nantes }
\email{martin.deborbon@univ-nantes.fr}
\author[E. Legendre]{Eveline Legendre}
\address{Universit\'e de Toulouse}
\email{eveline.legendre@math.univ-tolouse.fr}

\begin{abstract}
		We show that any toric K\"ahler cone with smooth compact cross-section admits a family of Calabi-Yau cone metrics with conical singularities along its toric divisors. The family is parametrized by the Reeb cone and the angles are given explicitly in terms of the Reeb vector field. The result is optimal, in the sense that any toric Calabi-Yau cone metric with conical singularities along the toric divisor (and smooth elsewhere) belongs to this family. We also provide examples and interpret our results in terms of Sasaki-Einstein metrics.     
\end{abstract}

\maketitle


\section{Introduction}

K\"ahler-Einstein metrics with conical singularities along divisors are canonical differential geometric structures on pairs of algebraic varieties endowed with real coefficient divisors. More precisely, the natural algebro-geometric framework in which their theory develops, is on the setting of klt pairs. From the analytic side, there is a general existence theory of weak K\"ahler-Einstein metrics on klt pairs, see \cite{Guedj}. In particular, one is interested in describing the tangent cones of these singular K\"ahler-Einstein metrics. The theory of normalized volumes of valuations (\cite{Li}) associates, by purely algebraic methods, affine cones to klt pair singularities. It is expected that these algebraic cones agree with the tangent cones of the appropriate singular K\"ahler-Einstein metrics. This expectation has been verified in a few cases; in the absolute setting -which means no cone singularities along divisors- see \cite{Hein-Sun}. However, such results in the logarithmic/cone singularities along divisors case are rare.

In this paper, extending famous results of Martelli--Sparks--Yau~\cite{MSY_toric} and Futaki--Ono--Wang~\cite{FOW}, we endow affine toric pairs  with Calabi-Yau cone metrics with conical singularities along the invariant divisors, which serve as natural candidates for tangent cones in the toric context. In particular, the metrics that we produce, are plausible tangent cones of K\"ahler-Einstein metrics on toric log Fano pairs at isolated singular points of the ambient variety, as provided by \cite{Berman}.

\subsection{Main results}
Let \(X\) be a toric K\"ahler cone of complex dimension \(n+1\) with smooth compact cross section. In what follows, $\bT = \R^{n+1}/ 2\pi \Z^{n+1}$ denotes a compact torus of dimension $n+1$ acting efficiently and holomorphically on \(X\). As we recall in section~\ref{ss:toricFRAMEWORK}, the associated moment cone is 
\begin{equation*}
C =  \{p \in \R^{n+1} \setminus \{0\} \hspace{2mm} \mbox{s.t } \ell_a(p) \geq 0 \hspace{2mm} \mbox{for } a=1, \ldots, d \} ,
\end{equation*}
where \(\ell_a\) are the linear functions defining the facets \(F_a = \{ \ell_a =0 \} \subset C\). We take inward normals to the facets, so \(\ell_a\) is given by taking the Euclidean inner product against a primitive integer vector \(v_a \in \Z^{n+1}\),
\begin{equation*}
\ell_a = \langle \cdot, v_a \rangle .
\end{equation*} 
The Reeb cone is the interior of the dual cone \(C^*_0\), where
\begin{align*}
C^* &= \{q \in \R^{n+1} \hspace{2mm} \mbox{s.t } \langle p, q \rangle \geq 0 \hspace{2mm} \mbox{for all } p \in C \} \\
&= \{\sum_a \lambda_a v_a \hspace{2mm} \lambda_a \geq 0 \} .
\end{align*}

We have an injective linear map \(L: \R^{n+1} \to \R^d\) given by 
\begin{equation}\label{e:mapL}
p \to (\ell_1(p), \ldots, \ell_d(p)). 
\end{equation}
The \emph{angles' cone} is the image of \(L\) intersected with the positive octant, that is 
\begin{equation*}
\betaCONE = \{ \beta = (\beta_1, \ldots, \beta_d) \in \R^d_{>0}\} \cap \mbox{Image}(L) .
\end{equation*}
Equivalently, \((\beta_1, \ldots, \beta_d)\) belongs to \(\betaCONE\) if and only if there is a point \(p\) in the interior of \(C\) such that
\begin{equation}\label{anglecondition}
\beta_a = \ell_a(p) \hspace{2mm} \mbox{for } a=1, \ldots, d .
\end{equation} 
The map \eqref{e:mapL} embeds the interior of the moment cone as \(C_0 \cong \betaCONE \subset \R^d\).

As a general rule, we use \(X\) to denote the cone \emph{without} its apex. In particular, \(X\) as well as its invariant divisors \(D_a \subset X\) are smooth manifolds. We fix \((X, \omega)\) as a symplectic compact cone manifold  and consider compatible K\"ahler cone structures on it, see eg \cite{Abreu_sas, MSY_toric}. As a general fact, the corresponding compatible complex structures that we consider are all biholomorphic.
We provide a notion of a toric K\"ahler cone metric on \(X\) with cone angles \(2\pi\beta_a\) along \(D_a\) (Definition \ref{def:CONEMETRIC}) by defining a suitable class of symplectic potentials (Definition \ref{def:SYMPLPOT}). With these concepts, our main results are the following.

\begin{theorem}\label{MAINTHM}
	There is a \((n+1)\)-family of $\bT$--invariant compatible Calabi-Yau cone metrics on \((X, \omega)\) with cone singularities along its toric divisors. The family of metrics can be realized in the following two equivalent ways.
	\begin{itemize}
		\item \emph{Fixing the Reeb vector field.} For every \(\xi\) in the interior of \(C^*\) there is a unique \(\beta \in \betaCONE\) such that \(X\) has a $\bT$--invariant Calabi-Yau cone metric with cone angles \(2\pi\beta_a\) along \(D_a\) and Reeb vector equal to \(\xi\).
		
		\item \emph{Fixing the cone angles.} For every \(\beta\) in \(\betaCONE\) there is a unique \(\xi\) in the interior of \(C^*\) such that \(X\) has a $\bT$--invariant Calabi-Yau cone metric with cone angles \(2\pi\beta_a\) along \(D_a\) and Reeb vector equal to \(\xi\).
	\end{itemize}
	In either case, the Calabi-Yau cone metric with prescribed Reeb vector or cone angles is unique up to isometry.
\end{theorem}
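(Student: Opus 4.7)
My plan is to reduce the theorem to a real Monge--Amp\`ere equation on the moment cone via symplectic potentials, extract from the Calabi--Yau condition a linear constraint that produces the bijection between Reeb vector fields and cone angles, and then solve the nonlinear PDE by reducing it to a toric K\"ahler--Einstein problem with conical singularities on the Sasakian cross section.

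First I would encode compatible toric K\"ahler cone structures on $(X,\omega)$ with cone angles $2\pi\beta_a$ along $D_a$ by their symplectic potentials on $C_0$, in the sense of Definitions~\ref{def:SYMPLPOT}--\ref{def:CONEMETRIC}. These are convex functions $u$ on $C_0$ whose boundary asymptotics along each facet $F_a$ are modelled by the singular term $(2\beta_a)^{-1}\ell_a\log \ell_a$, with the Reeb vector $\xi \in C^*_0$ captured by a $1$--homogeneity condition built from $\xi$. The holomorphic volume form $\Omega$ of $X$ has an explicit expression in action--angle coordinates, and the Calabi--Yau equation $\Omega \wedge \overline{\Omega} = c\,\omega^{n+1}/(n+1)!$ translates into a log--Monge--Amp\`ere equation on $u$ of the form
\begin{equation*}
\log\det(u_{ij})(p) = -2\langle p,\gamma\rangle + \mathrm{const},
\end{equation*}
for some $\gamma \in \R^{n+1}$ to be determined.

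Second, I would extract the $\xi$--$\beta$ constraint from this equation. Matching the facet asymptotics of $u$ to the right-hand side forces $\gamma \in C_0$ and $\beta_a = \ell_a(\gamma)$, so $\beta = L(\gamma) \in \betaCONE$. Compatibility of the log--MA equation with the $\xi$--homogeneity of $u$ then pins down $\gamma$ uniquely in terms of $\xi$ (and vice versa), giving the bijection $C^*_0 \leftrightarrow \betaCONE$ asserted in the two formulations of the theorem via the map $L$ of \eqref{e:mapL}. The two parametrizations --- by $\xi$ or by $\beta$ --- are therefore two presentations of a single $(n+1)$--parameter family.

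Third, for existence I would take as initial guess a Guillemin--type potential $u_{\beta,\xi}$ adapted to the prescribed facet asymptotics and Reeb homogeneity, and look for the solution in the form $u = u_{\beta,\xi} + \varphi$ with $\varphi$ a smooth $\bT$--invariant function that is constant along the $\xi$--flow. Descending to the link $\{r=1\}$, the log--MA equation for $\varphi$ becomes a transverse Monge--Amp\`ere equation on the Sasakian cross section, which is precisely the symplectic potential formulation of a toric log K\"ahler--Einstein metric with cone angles $2\pi\beta_a$ on the transverse Fano orbifold determined by $(\omega,\xi)$. Existence then follows by combining the conical toric K\"ahler--Einstein theory --- Ding-functional/variational techniques \`a la Wang--Zhu and Berman--Berndtsson extended to prescribed cone angles --- with the FOW lifting procedure from the transverse setting back to the cone. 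Uniqueness up to isometry is a consequence of strict convexity of the Ding functional along geodesics in the space of $\bT$--invariant conical symplectic potentials, together with the bijection established in the second step.

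The main obstacle will be the analytic existence step in the presence of conical singularities along the toric divisors: one must verify that the transverse conical KE equation with the exact $\beta \in \betaCONE$ dictated by the Calabi--Yau condition is solvable, and that the variational setup survives the translation between the symplectic potential picture and the standard K\"ahler potential picture on the cross section. In the toric setting the rational polyhedral geometry of $C$ and the strong convexity properties of the Ding functional on symplectic potentials should make the required a priori estimates tractable, following the conceptual path of the smooth case of MSY and FOW.
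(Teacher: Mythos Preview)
Your overall architecture---symplectic potentials, reduction of the Calabi--Yau condition to a log Monge--Amp\`ere equation, and then a transverse conical K\"ahler--Einstein problem solved by Wang--Zhu type methods---matches the paper's. But there is a real gap in your second step. You assert that ``compatibility of the log--MA equation with the $\xi$--homogeneity of $u$ pins down $\gamma$ uniquely in terms of $\xi$ (and vice versa).'' It does not. Differentiating the Monge--Amp\`ere equation along the Euler field yields only the single scalar relation $\langle \gamma,\xi\rangle = -(n+1)$ (compare the computation in Proposition~\ref{prop:RICPOT}); for fixed $\xi$ this leaves an $n$--dimensional affine family of admissible $\gamma$'s, and for fixed $\gamma$ (equivalently fixed $\beta=L(\gamma)\in\betaCONE$) it constrains $\xi$ only to the hyperplane section $\Xi_\beta\subset C^*_0$. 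The bijection $C^*_0\leftrightarrow\betaCONE$ is not linear and is certainly not the map $L$: it is a degree $-1$ homogeneous analytic map that cannot be read off from the equation's structure alone.

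What actually selects the unique $\beta$ for each $\xi$, and the unique $\xi$ for each $\beta$, is precisely the Futaki obstruction to the transverse problem you defer to step three. For fixed $\xi$, the conical KE equation on $P_\xi$ is solvable only when the labelled polytope $(P_\xi,\beta^{-1}\ell)$ is monotone with vanishing log Futaki invariant, and this forces $\beta_a$ to equal $\ell_a$ evaluated at the barycenter of $P_\xi$ (suitably normalized). Conversely, for fixed $\beta\in\betaCONE$ one must prove that among all $\xi\in\Xi_\beta$ the log Futaki invariant vanishes at exactly one point; the paper does this by showing that the volume functional $\xi\mapsto\int_{P_\xi}d\tilde{x}$ is strictly convex and proper on $\Xi_\beta$ and that its unique critical point is characterized by the barycenter condition $(\star)_\beta$. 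This Martelli--Sparks--Yau volume-minimization step is entirely absent from your outline, and without it the existence argument in your third paragraph is attempting to solve a transverse KE equation whose Futaki obstruction has not been shown to vanish.
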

More precisely,
the map $\xi\mapsto \beta \in \betaCONE$ of the first part of Theorem~\ref{MAINTHM} is explicitly given by $$\beta_a = \frac{1}{\int_{P_\xi} d\tx} \int_{P_\xi} \ell_a(\tx)d\tx ,$$ where $d\tx =d\tx_1\wedge \dots \wedge d\tx_n$ and $(\tx_1,\dots,\tx_n)$ are affine coordinates on the transversal polytope $P_\xi :=\{ x\in C\, | \, \langle \xi, x \rangle = 1/2\}$.

Theorem \ref{MAINTHM} admits a Sasakian reformulation that goes as follows.
Write \(S\) for the link of the cone, so \(X = C(S)\) is diffeomorphic to
\begin{equation*}
X \cong \R_{>0} \times S
\end{equation*}
and \(S\) is a Sasaki manifold of dimension \(2n+1\). Let \(\Sigma_a\) the codimension two submanifolds of \(S\) cut out by \(D_a\),  so 
\begin{equation*}
D_a = C(\Sigma_a) , \hspace{2mm} \mbox{with } \Sigma_a \subset S .
\end{equation*}
We can restate Theorem \ref{MAINTHM} in terms of the existence of toric Sasaki-Einstein metrics on \(S\) with cone angles \(2\pi\beta_a\) along \(\Sigma_a\). In particular, Theorem \ref{MAINTHM} asserts that \emph{any} toric compact Sasaki manifold admits a family of Sasaki-Einstein metrics with conical singularities in real codimension two.

Theorem \ref{MAINTHM} provides a complete family, in the sense that if there is a Calabi-Yau cone metric on \(X\) with cone angles \(2\pi\beta_a\) along \(D_a\), then the cone angles must necessarily satisfy Equation \eqref{anglecondition} and the metric must be isometric to one given by Theorem \ref{MAINTHM}.
The next result translates our condition $\beta\in \betaCONE$ given by Equation \eqref{anglecondition} in cohomological terms. 

\begin{theorem} \label{THM2} Let $(X,\omega)$ be a toric K\"ahler cone with smooth compact cross section.    
	The following statements are equivalent:
	\begin{enumerate}
		\item The cone angle condition given by Equation \eqref{anglecondition} holds, i.e. $\beta\in \betaCONE$.
		\item There is a compatible toric Calabi-Yau cone metric on \((X, \omega)\) with cone angles \(2\pi\beta_a\) along \(D_a\).
		\item For any compatible toric K\"ahler cone metric on $(X, \omega)$ with cone angles \(2\pi\beta_a\) along \(D_a\), there exists a smooth function $h$ on \(X\) such that the associated Ricci form satisfies $\rho^\omega= i\dd h$ on $X\setminus \cup_a D_a$.  
		\item \((\bar{X}, \sum_a(1-\beta_a)\bar{D}_a)\) is a klt log pair, where \(\bar{X} = X \cup \{o\}\), \(\bar{D}_a = D_a \cup \{o\}\) and \(\{o\}\) is the apex of the cone.
		\item \(c_1(H) = \sum_a (1-\beta_a)[\Sigma_a]\) and  \(c_1^B - \sum_a (1-\beta_a)[\Sigma_a]_B >0\), where \(H \subset TS\) is the contact distribution, \([\Sigma_a] \in H^2(S, \R)\) are the Poinar\'e duals of the smooth toric submanifolds \(\Sigma_a \subset S\) and \(c_1^B\) is the basic first Chern class.
	\end{enumerate}
\end{theorem}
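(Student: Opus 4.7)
The strategy is to close the cycle $(1) \Rightarrow (2) \Rightarrow (3) \Rightarrow (1)$, and then to establish $(1) \Leftrightarrow (4)$ and $(1) \Leftrightarrow (5)$ separately by translating the angle-cone condition $\beta \in \betaCONE$ into algebraic and Sasakian language, respectively.

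The implication $(1) \Rightarrow (2)$ is immediate from Theorem~\ref{MAINTHM}. For $(2) \Rightarrow (3)$ I would compare an arbitrary compatible toric K\"ahler cone metric $\omega$ with cone angles $2\pi\beta_a$ to the Calabi--Yau cone metric $\omega_{CY}$ supplied by (2). Their symplectic potentials share the same singular part at each facet by Definition~\ref{def:SYMPLPOT}, so the difference is smooth and $\bT$-invariant, and the ratio $\omega^{n+1}/\omega_{CY}^{n+1}$ extends as a smooth positive function across the divisors. On $X \setminus \cup_a D_a$ one then reads $\rho^\omega = -i\partial\bar\partial\log\bigl(\omega^{n+1}/\omega_{CY}^{n+1}\bigr)$ off $\rho^{\omega_{CY}}=0$, producing a smooth Ricci potential $h$.

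The main obstacle is $(3) \Rightarrow (1)$. I would reformulate the existence of $h$ as the existence of a $\bT_\C$-semi-invariant meromorphic $(n+1)$-form $\Omega$ on $X$ such that $i^{(n+1)^2}\Omega\wedge\bar\Omega/(n+1)!$ matches $e^h\omega^{n+1}$ up to a positive constant. By the local formula $\rho^\omega = i\partial\bar\partial\log\bigl((\Omega\wedge\bar\Omega)/\omega^{n+1}\bigr)$, condition (3) amounts to saying that $\log\bigl((\Omega\wedge\bar\Omega)/\omega^{n+1}\bigr) - h$ is $\bT$-invariant and pluriharmonic on the regular part; such a function is automatically linear in moment coordinates, and this linear term can be absorbed into $\Omega$ by twisting with a $\bT_\C$-character. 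On the toric cone, $\bT_\C$-semi-invariant meromorphic top forms are classified by vectors $p \in \R^{n+1}$: the associated form $p \cdot \Omega_0$, where $\Omega_0$ is the canonical invariant form in toric coordinates, has order $\ell_a(p) - 1$ along $D_a$. Matching this against the cone angle asymptotic $\beta_a - 1$ of $\omega^{n+1}$ forces $\ell_a(p) = \beta_a$ for every $a$, and requiring $\Omega$ to be regular and nowhere vanishing on $X$ forces $p \in \mathrm{int}(C)$; this is exactly $\beta \in \betaCONE$.

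For $(1) \Leftrightarrow (4)$ I would invoke the standard toric criterion for klt pairs. Since $K_{\bar X} = -\sum_a \bar D_a$ on the affine toric variety $\bar X$, the log anticanonical $\Q$-divisor of $(\bar X, \sum_a (1-\beta_a)\bar D_a)$ is $\sum_a \beta_a \bar D_a$; its $\Q$-Cartier property is equivalent to the existence of a linear functional $p \in \R^{n+1}$ with $\ell_a(p) = \beta_a$, and klt at the apex (combined with $\beta_a > 0$, which ensures that the coefficients $1-\beta_a$ are strictly less than $1$) forces $p \in \mathrm{int}(C)$, both of which jointly define $\betaCONE$. For $(1) \Leftrightarrow (5)$ I would pass to the Sasakian link via $D_a = C(\Sigma_a)$: following Futaki--Ono--Wang, the Poincar\'e duals $[\Sigma_a] \in H^2(S, \R)$, the contact first Chern class $c_1(H)$ and the basic first Chern class $c_1^B$ admit explicit toric representatives in terms of the normals $v_a$ and the Reeb vector $\xi$. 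The cohomological identity $c_1(H) = \sum_a (1-\beta_a)[\Sigma_a]$ translates into the linear system $\ell_a(p) = \beta_a$, while the positivity $c_1^B - \sum_a (1-\beta_a)[\Sigma_a]_B > 0$ records that the resulting $p$ lies in the interior of the moment cone, closing the equivalences.
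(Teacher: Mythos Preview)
Your overall architecture matches the paper's: the cycle $(1)\Rightarrow(2)\Rightarrow(3)\Rightarrow(1)$ together with the separate equivalences $(1)\Leftrightarrow(4)$ and $(1)\Leftrightarrow(5)$ is exactly how the proof is organized. But the step $(2)\Rightarrow(3)$ as you wrote it does not work. In this paper's setup a ``compatible toric K\"ahler cone metric on $(X,\omega)$'' means a choice of compatible complex structure $J_G$ (equivalently a symplectic potential $G$); the symplectic form $\omega$ is \emph{fixed}. Hence your two metrics share the same $\omega$ and the ratio $\omega^{n+1}/\omega_{CY}^{n+1}$ is identically $1$. Worse, the two Ricci forms are computed with respect to \emph{different} complex structures $J_G$ and $J_{G_{CY}}$, so the identity $\rho_1-\rho_2=-i\partial\bar\partial\log(\omega_1^{n+1}/\omega_2^{n+1})$, which needs a common $J$, is not available. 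The paper avoids this by proving $(1)\Leftrightarrow(3)$ directly (Proposition~\ref{prop:RICPOT}), analyzing the boundary behaviour of $\det G_{ij}$ against $\exp(2\gamma_i\,\partial G/\partial x_i)$; then $(2)\Rightarrow(3)$ factors through $(2)\Rightarrow(1)$ (Lemma~\ref{l:monotone}). If you insist on a direct comparison, you must first pull back through the equivariant biholomorphism $\Phi_{G,G_{CY}}$ to a common complex structure and then argue smoothness of the volume ratio across the divisors---none of which is in your sketch.

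A couple of smaller slips. In $(3)\Rightarrow(1)$, the $\bT$-invariant pluriharmonic function is affine in the \emph{complex} (log) coordinates $y=DG(x)$, not in the moment coordinates $x$; this is precisely what produces the vector $\gamma$ in the paper's argument. Your reformulation via a semi-invariant meromorphic top form is the viewpoint the paper discusses \emph{after} Proposition~\ref{prop:RICPOT}, but since $p$ is typically not a lattice point, ``order along $D_a$'' must be read as the real asymptotic exponent rather than an algebraic vanishing order. Finally, in $(1)\Leftrightarrow(4)$ and $(1)\Leftrightarrow(5)$ you describe the klt inequality and the basic positivity as encoding $p\in\mathrm{int}(C)$; in fact $\beta_a>0$ together with $\ell_a(p)=\beta_a$ already forces $p$ into the interior, and the paper shows (Propositions~\ref{kltProp} and~\ref{c1basicPROP}) that the klt property and the basic positivity are then \emph{automatic} in the toric case, not independent constraints.
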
 

Recall that  \(\bar{X} = X \cup \{o\} \) has the structure of an affine toric algebraic variety, homeomorphic to
\begin{equation*}
\R_{\geq 0} \times S / (\{0\} \times S) .
\end{equation*}
Its apex \(\{o\}\) is characterized as being the only fixed point of the torus action and the variety \(\bar{X}\) is singular at \(\{o\}\), unless \(\bar{X} = \C^{n+1}\). The definition of a klt log pair will be recalled in Section \ref{sect:ALG}; it involves two conditions, which can be paired with their corresponding Sasakian analogues as follows
\begin{align*}
K_{\bar{X}} + \sum_a (1-\beta_a)\bar{D}_a  \hspace{1mm}\mbox{ is \(\R\)-Cartier} &\leftrightsquigarrow c_1(H) = \sum_a (1-\beta_a)[\Sigma_a] \\
\mbox{the log discrepancies are \(>-1\)} &\leftrightsquigarrow c_1^B - \sum_a (1-\beta_a)[\Sigma_a]_B >0 .
\end{align*}

From the algebraic point of view, the angle constraints given by Equation \eqref{anglecondition} is equivalent to \(K_{\bar{X}} + \sum_a (1-\beta_a)\bar{D}_a\) being \(\R\)-Cartier. On the other hand, from the Sasakian perspective, Equation \eqref{anglecondition} is equivalent to the vanishing of the `logarithmic' first Chern class \(c_1(H) - \sum_a (1-\beta_a)[\Sigma_a]\). In the toric case we consider, it is a general fact that if \(K_{\bar{X}} + \sum_a (1-\beta_a)\bar{D}_a\) is \(\R\)-Cartier then the log pair \((\bar{X}, \sum_a (1-\beta_a)\bar{D}_a)\) is automatically klt. Similarly, if \(c_1(H) = \sum_a (1-\beta_a)[\Sigma_a]\), then the basic logarithmic first Chern class \(c_1^B - \sum_a (1-\beta_a)[\Sigma_a]_B\) is automatically positive. The klt property of the pair/positivity of the logarithmic basic first Chern class are natural necessary assumptions in the search of conically singular Calabi-Yau metrics, as provided by Theorem \ref{MAINTHM}.

\subsection{Transversal polytopes, barycenters, Reeb vector fields and cone angles}

The proof of Theorem~\ref{MAINTHM} is a combination of the principal result of \cite{RKE_legendre} and an adaptation of the variational characterization of the vanishing of the transversal Futaki invariant \cite{MSY_toric}. In the recent years, thanks to the work of many people, e.g. \cite{Abreu_sas, H2FII, don:Large, guillMET, MSY_toric}, a precise dictionary between toric K\"ahler geometry and convex affine geometry over some convex polyhedral set ({\it rational polytopes}) has been established. This allows us to translate any torus invariant geometric PDE problem on a toric polarized complex manifold into a boundary value problem on a convex polytope $P$ in an affine space $H$. The boundary conditions depend on a {\it labelling} $\tell = (\tell_1,\dots,\tell_d)$, that is a minimal set of fixed affine linear functions such that $P=\{ x \in H \,|\, \tell_a(x)\geq 0, \,\ a=1,\dots, d\}.$

An asset of this point of view is that given a Reeb vector, i.e. a toric polarized symplectic cone $(X, \omega,\xi)$ as above, we get a natural labelled polytope 
\begin{equation}\begin{split}
P_\xi :=&\{ x\in \R^{n+1}\,|\, \langle \xi, x\rangle =1/2 \mbox{ and } \beta_a^{-1}\ell_a(x) \geq 0, \,\ a=1,\dots, d\}\\
&\qquad \subset \{ x\in \R^{n+1}\,|\, \langle \xi, x\rangle =1/2\} =: H_\xi ;
\end{split} 
\end{equation} which is not necessarily rational but on which the PDE analogue to the K\"ahler--Einstein problem makes sense and its resolution, given by Wang--Zhu~\cite{WangZhu} in the smooth, non conical, compact toric case, still holds. Along these lines, a straightforward application of \cite[Theorem 1.6 and \S 6]{RKE_legendre} is:\\

{\it There exists a toric Calabi-Yau cone metric with Reeb vector field $\xi$ and conical singularities of angles $2\pi\beta_a$ along the toric divisors $D_a$ if and only if the transversal polytope $P_\xi\subset C$ satisfies the following combinatorial condition:} 
\begin{itemize}
	\item[$(\star)_\beta$] {\it The barycenter (with respect to any affine measure) of $P_\xi$, say $p\in P_\xi$, satisfies} 
	$$\beta_1^{-1}\ell_1(p)= \cdots =\beta_d^{-1}\ell_d(p) = (n+1)^{-1}.$$
\end{itemize}   

It turns out that the condition given by \eqref{anglecondition} is exactly the right one to run the variational principle established by Martelli--Sparks--Yau~\cite{MSY_toric} in our generalized setting. Indeed, given $\beta\in \betaCONE$, it is the image of a unique point $(n+1)p_\beta \in C_0$ via the map \eqref{e:mapL} and we can define the set $\Xi_\beta$ of Reeb vector fields whose transversal polytope $P_\xi$ contains $p_\beta$. That is $$\Xi_\beta := \{ \xi \in C^*_0\,|\, \langle \xi, p_\beta\rangle =1/2\},$$  which is a cross section of $C^*_0$ and a convex open polytope in the affine hyperplane defined by $\langle \cdot , p_\beta\rangle =1/2$. Then, extending the main result of Martelli--Sparks--Yau~\cite{MSY_toric}, we prove
\begin{lemma}                                                   
	The volume functional $$\bfV : \Xi_\beta \ra \R $$ is strictly convex and its unique minimum $\xi_\beta\in \Xi_\beta$ is the only Reeb vector field in $C^*_0$ satisfying the condition $(\star)_\beta$. In particular, $\xi_\beta$ is the only Reeb vector field in $C^*_0$ admitting a compatible toric Calabi-Yau cone metric with cone angles $2\pi\beta_a$ along the toric divisors $D_a$.    
\end{lemma}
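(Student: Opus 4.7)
The plan is to adapt the Martelli--Sparks--Yau variational argument~\cite{MSY_toric} to our constraint hyperplane $\Xi_\beta = C^*_0 \cap \{\langle \cdot, p_\beta\rangle = 1/2\}$, which plays the role of the Calabi--Yau slice of the smooth case. Strict convexity will be obtained from a standard exponential integral trick; existence of the minimizer will follow from a simple properness argument on $\Xi_\beta$; the critical point equation will be rewritten as a barycenter condition and compared with $(\star)_\beta$.

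\emph{Strict convexity.} Following \cite{MSY_toric}, the volume functional $\bfV$ coincides, up to a positive multiplicative constant fixed by the normalization of the affine measure $d\tilde x$, with the cone body volume
\[
W(\xi) \;:=\; \mathrm{vol}_{n+1}\!\big(\{x \in C : \langle \xi, x\rangle \leq 1\}\big).
\]
The identity $F(\xi) := \int_C e^{-\langle \xi, x\rangle}\, dx = (n+1)!\, W(\xi)$ follows by slicing along $\{\langle \xi, \cdot\rangle = t\}$ and using $\int_0^\infty t^n e^{-t}\,dt = n!$, and the Hessian
\[
\mathrm{Hess}\,F(\eta, \eta) \;=\; \int_C \langle \eta, x\rangle^2 e^{-\langle \xi, x\rangle}\,dx
\]
is strictly positive for $\eta \neq 0$ since $C$ has non-empty interior. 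Hence $W$, and therefore $\bfV$, is strictly convex on $C^*_0$, and in particular on the open affine slice $\Xi_\beta$.

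\emph{Existence and Euler--Lagrange equation.} As $\xi_k \in \Xi_\beta$ approaches $\partial\Xi_\beta$, one has $\langle \xi_k, v_a\rangle \to 0$ for some facet normal $v_a$ of $C$, and then $P_{\xi_k}$ becomes unbounded along the corresponding extremal ray of $C$, so $\bfV(\xi_k) \to \infty$. Combined with strict convexity this yields a unique minimizer $\xi_\beta \in \Xi_\beta$. Differentiating $F$ and pulling back to $P_\xi$ via the parametrization $x = 2ty$, $y \in P_\xi$, $t > 0$, gives
\[
d\bfV|_\xi(\eta) \;=\; -C(\xi)\,\langle \eta,\,\overline{x}(\xi)\rangle
\]
with $C(\xi) > 0$ and $\overline{x}(\xi) \in P_\xi$ the barycenter of the transversal polytope. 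By Lagrange multipliers, $\xi \in \Xi_\beta$ is critical iff $\overline{x}(\xi)$ is proportional to $p_\beta$; pairing with $\xi$ and using that both $\overline{x}(\xi)$ and $p_\beta$ lie in $\{\langle \xi, \cdot\rangle = 1/2\}$ forces $\overline{x}(\xi) = p_\beta$. Since by the very definition of $p_\beta$ one has $\ell_a((n+1)p_\beta) = \beta_a$ and $L$ is injective, this is equivalent to $\beta_a^{-1}\ell_a(\overline{x}(\xi)) = (n+1)^{-1}$ for every $a$, that is, precisely $(\star)_\beta$. Conversely, any $\xi \in C^*_0$ satisfying $(\star)_\beta$ has $\overline{x}(\xi) = p_\beta$, whence $\langle \xi, p_\beta\rangle = 1/2$ and $\xi \in \Xi_\beta$; uniqueness then forces $\xi = \xi_\beta$. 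The ``in particular'' assertion follows directly from the Calabi--Yau cone existence criterion recalled just before the lemma, namely \cite[Theorem 1.6 and \S 6]{RKE_legendre}.

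\emph{Main obstacle.} The delicate technical step is the derivative formula above: both the hyperplane $\{\langle \xi, \cdot\rangle = 1/2\}$ and the affine measure on $P_\xi$ vary with $\xi$, so a direct computation on $P_\xi$ would produce moving-boundary contributions together with a varying Jacobian. I would sidestep these by differentiating instead the $\xi$-independent integrand $F(\xi) = \int_C e^{-\langle \xi, x\rangle}\,dx$ and translating the result back to $P_\xi$ through the $(t,y)$-parametrization only at the very end, where the slicing Jacobian cleanly absorbs the moving terms and one recovers an integral of the linear form $\langle \eta, \cdot\rangle$ against the barycentric measure on $P_\xi$.
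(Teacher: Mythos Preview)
Your proof is correct and follows essentially the same variational route as the paper's Section~3.2: strict convexity of $\bfV$, properness at the boundary of $\Xi_\beta$, and identification of the Euler--Lagrange condition with $\overline{x}(\xi)=p_\beta$, which is exactly $(\star)_\beta$. The paper computes the first and second variations directly on $P_\xi$ (obtaining $(d/dt)\bfV=-(n+1)\int_{P_\xi}\nu\,d\tx$ and $(d^2/dt^2)\bfV=(n+1)(n+2)\int_{P_\xi}\nu^2\,d\tx$) rather than via the characteristic function $F(\xi)=\int_C e^{-\langle\xi,x\rangle}\,dx$, but this is cosmetic; the paper also establishes the further equivalence with vanishing of the transversal log Futaki invariant (barycenter of $(P_\xi,d\tx)$ equals barycenter of $(\partial P_\xi,\sigma_{\xi,\beta})$), which is not required for the lemma as stated in the Introduction.

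One terminological slip in your properness step: you write ``$\langle\xi_k,v_a\rangle\to 0$ for some facet normal $v_a$ of $C$'', but the $v_a$ are the \emph{ray generators} of $C^*$, not its facet normals. What tends to zero as $\xi_k\to\partial C^*_0$ is $\langle\xi_k,w\rangle$ for some extremal ray $w$ of $C$ (equivalently, a facet normal of $C^*$). Your next clause (``$P_{\xi_k}$ becomes unbounded along the corresponding extremal ray of $C$'') is the correct picture, so the argument stands once the roles are swapped.
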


The family of metrics given by Theorem \ref{MAINTHM} can be parametrized by either the Reeb vector field \(\xi\) or by the cone angles \(\beta\) along the toric divisors, thus providing a bijection between \(C^*_0\) and \(\betaCONE \cong C_0\). This illustrates a natural bijection between the interior of a cone and the interior of its dual cone highlighted by Gigena in \cite[Lemma 3.1]{Gigena}. The maps \(\xi \to \beta\) and \(\beta \to \xi\) are inverses of each other, are \((-1)\)-homogeneous, and define an analytic bijection between the  \(C^*_0\) and \(\betaCONE\). In particular, it follows from the algebraic character of the volume function, that the components of \(\xi(\beta)\) are algebraic numbers over \(\Q(\beta)\) and vice-versa.

\begin{ack}
	The first named author was financially supported by the ANR Project CCEM, \textbf{ANR-17-CE40-0034}. The second named author was supported by the CNRS COOPINTEER \textbf{IEA-295351}.
\end{ack}

\section{Toric conical K\"ahler metrics in action-angle coordinates} 

\subsection{Basic example}\label{s:basicEX}

We start by looking at \(\C^{n+1}\), we begin first with \(\C\).
Let \(\beta>0\) and write \(\C_{\beta} = (\C, g_{\beta})\) for the complex numbers endowed with the singular metric \(g_{\beta} = \beta^2 |z|^{2\beta-2}|dz|^2\). Writing \(z = r^{1/\beta}e^{i\theta}\), we have
\[g_{\beta} = dr^2 + \beta^2 r^2 d\theta^2  \]
which we recognize as the cone over a circle of radius \(2\pi \beta\). Its Reeb vector field is 
\begin{equation*}
I(r \p_r) = \beta^{-1} \p_{\theta} 
\end{equation*}
and the symplectic form is \(\omega_{\beta} = \beta r dr \wedge d\theta \). The vector field \(\p_{\theta}\) generates a circle action by holomorphic isometries, with Hamiltonian
\begin{equation*}
x = \frac{\beta r^2}{2} .
\end{equation*}
In action angle coordinates \((x, \theta)\), we have \(\omega_{\beta} = dx \wedge d\theta\) and
\begin{align*}
g_{\beta} &= \frac{1}{2\beta x} dx^2 + 2\beta x d\theta^2 \\
&= G''dx^2 + (G'')^{-1} d\theta^2 ,
\end{align*}
with symplectic potential
\begin{equation*}
G = \frac{1}{2\beta} x \log x.
\end{equation*}

Consider next the product of two cones \(dr_1^2 + \beta_1^2 r_1^2 d\theta_1^2\) and \(dr_2^2 + \beta_2^2r_2^2d\theta_2^2\). We introduce variables \((r, \psi) \in (0, \infty) \times (0, \pi/2) \) defined by \(r_1 = r \cos(\psi)\) and \(r_2 = r \sin(\psi)\). It is  then easy to check that  \(\C_{\beta_1} \times \C_{\beta_2} \) is a cone, in the sense that \(g_{\C_{\beta_1} \times \C_{\beta_2}} = dr^2 + r^2 g_{S^3_{(\beta)}}\), with link
\begin{equation*}
g_{S^3_{(\beta)}}  = d\psi^2 + \beta_1^2\cos^2(\psi)d\theta_1^2 + \beta_2^2 \sin^2(\psi) d\theta_2^2 .
\end{equation*}
We see that \(g_{S^3_{(\beta)}}\) defines a Sasaki metric on the three-sphere, with constant sectional curvature \(1\) and cone angles \(2\pi\beta_1\) and \(2\pi\beta_2\) along the Hopf circles defined by intersecting \(\{z_1=0\}\) and \(\{z_2=0\}\) with the unit three-sphere.

Same way, the metric product \(\mathbf{C}_{\beta_1} \times \ldots \times \mathbf{C}_{\beta_{n+1}}\) defines a Calabi-Yau (indeed flat) cone metric on \(\C^{n+1} \) with cone angles \(2\pi\beta_a\) along \(\{z_a=0\}\). Its link is the \((2n+1)\)-sphere, endowed with a Sasaki-Einstein metric of constant sectional curvature \(1\) and cone angles \(2\pi\beta_a\) along the \((2n-1)\)-spheres cut out by \(\{z_a=0\}\cap S^{2n+1}\).
Its Reeb vector field is  
\begin{equation*}
\frac{1}{\beta_1} \frac{\p}{\p \theta_1} + \ldots + \frac{1}{\beta_{n+1}}  \frac{\p}{\p \theta_{n+1}} .
\end{equation*} 
The diagonal action of \(\bT\) on \(\C_{\beta_1} \times \ldots \times \C_{\beta_{n+1}}\) is by Hamiltonian isometries. 
We have action angle coordinates \((x, \theta)\) with components \(x_i = \beta_i r_i^2 /2\). The moment cone is \( \cap_i \{x_i>0\} \subset \R^{n+1} \) and the symplectic potential is
\begin{equation*}
G = \frac{1}{2} \sum_a \beta_a^{-1} x_a \log x_a .
\end{equation*}
The Reeb vector/cone angles correspondence of Theorem \ref{MAINTHM} in this case is simply 
\begin{equation*}
\xi (\beta) = (\beta_1^{-1}, \ldots, \beta_{n+1}^{-1}) .
\end{equation*}

\subsection{Symplectic potentials} \label{ss:toricFRAMEWORK} \label{sect:SYMPLPOT} The goal of this section is to describe K\"ahler toric metrics with conical singularities in terms of their symplectic potentials. To this end we need to review a bit action-angle and complex coordinates on toric K\"ahler manifolds.  

\begin{remark}
	Since most of the content of the paper is differential-geometric, we use \(X\) to denote the cone without its apex.
\end{remark}

\subsubsection{Symplectic potentials of smooth K\"ahler toric metrics} 

Let $(X^{n+1}, J, \omega)$ be a smooth K\"ahler cone over a compact base with radial vector field, classically denoted $r \p_r \in \Gamma(TX)$. Recall that it means that $r\p_r$ induces a free, holomorphic and proper action of $\R^+$ and $\omega$ is homogeneous of order $2$ (i.e $\mL_{r\p_r} \omega=2\omega$) and that $X/\R^+ =:S$ is compact. The {\it Reeb vector field} is, by definition, $\xi := Jr\p_r$. 

In what follows we assume moreover that $(X, J, \omega)$ is \emph{toric} meaning that a $(n+1)$--dimensional compact torus $\bT$ acts effectively on $X$ in a Hamiltonian fashion and holomorphically. In that case, there is a unique momentum map $\mu : X\ra \kt^*$, homogeneous of order $2$ with respect to the $\R^+$--action where $\kt=\mbox{Lie}(\bT)$ and \(\kt^*\) is its dual. According to \cite{Lerman}, the \emph{moment cone} $C = \mu(X)$ is then a strictly convex rational polyhedral cone (without its apex) that can be written as $$C=\{ x\in \kt^* \setminus\{0\} \,|\, \langle x, \ell_a \rangle \geq 0, \, a=1,\dots, d\}$$ where $d$ is the number of facets of $C$ and $2\pi\ell_1,\dots, 2\pi\ell_d \in \Lambda \subset \kt$ are primitives in the lattice of circle of subgroups of $\bT = \kt / \Lambda$.
 
\begin{remark}
	In most of the paper, we identify \(\kt \cong \R^{n+1}\) and \(\Lambda \cong 2\pi \Z^{n+1}\).  We also use the Euclidean inner product to get \(\kt \cong \kt^* \cong \R^{n+1}\).
\end{remark}

The fact that the cross section of the cone \(C(S)\) is a smooth manifold, amounts to \(C\) being \emph{good} in the sense of Lerman \cite{Lerman}. We let \(\kt \cong \kt^* \cong \R^{n+1}\), then  we have primitive inward normals  \(v_a \in \Z^{n+1}\) such that
\(\ell_a (\cdot) = \langle v_a, \cdot \rangle\), where \(\langle \cdot, \cdot \rangle\) is the Euclidean inner product. 
The good condition on \(C\) means that, for any face \(F = \cap_{A=1}^N \{\ell_{a_A} =0\}\), the following holds
\[\{\sum_{A=1}^{N} \nu_A v_{a_A}, \,\ \nu_A \in \R \} \cap \Z^{n+1} = \{\sum_{A=1}^{N}\nu_A v_{a_A}, \,\ \nu_A \in \Z\} .\] 
Conversely, any good strictly convex rational polyhedral cone is the moment cone of a smooth toric K\"ahler cone.

Let \(C_0\) be the interior of \(C\), so \begin{equation}\label{eq:AAcoord} X_0 = \mu^{-1}(C_0) \cong C_0 \times \bT\end{equation}
is the set of points where the action is free \cite{Lerman}. The coordinates on $X_0$ given by the r.h.s. of \eqref{eq:AAcoord} are called the {\it action-angle coordinates}, see \cite{CDG, guillMET, MSY_toric}. Locally, it gives coordinates $(x_0,\dots,x_n,\theta_0,\dots, \theta_n)\in \R^{2n+2}\cong \kt^*\times \kt$ on $X_0$ where the class $[\theta]$ -with \(\theta_i \sim \theta_i + 2\pi\)- parametrizes \(\bT\). Guillemin~\cite{guillMET} proved that any $\bT$--invariant K\"ahler structure $(\omega,J,g)$ in action--angle coordinates \((x, [\theta])\) (thus on $X_0$), is of the form
$$\omega = \sum_{i=0}^n dx_i \wedge d\theta_i, $$  
with $g=g_G$ and $J=J_G$ given by  
\begin{equation} \label{eq:METSP}
	g_G := G_{ij} dx_i \otimes dx_j + G^{ij}d\theta_i \otimes d\theta_j,
\end{equation}
\begin{equation} \label{eq:CXSP}
	J_G \p_{x_i} = G_{ij} \p_{\theta_j}, \hspace{2mm} J_G \p_{\theta_i} = -G^{ij} \p_{x_j} .
\end{equation}
 Here, \(G_{ij} = \p^2 G / \p x_i \p x_j\) are the entries of the Hessian of a smooth strictly convex function \(G : C_0\ra \R\) and \(G^{ij}\) are the entries of the inverse matrix. It is common to call $G$ a {\it symplectic potential} of $g_G$. Observe that we need action-angle coordinates (therefore momentum map and a fixed symplectic form) to interpret $G$ or $g_G$ as a metric on $X_0$. The resulting metric does not depend on the choice of these coordinates see \cite{H2FII}.

The condition for a smooth strictly convex function \(G : C_0\ra \R\) to define a smooth K\"ahler cone metrics are well-known as we recall now.

 \begin{proposition}\cite{{abreuOrbifold},H2FII, guillMET, MSY_toric} \label{prop:SYMPPOT}
 The tensor \eqref{eq:METSP} extends as a smooth compatible K\"ahler cone metric on $(X,\omega)$  if and only if   
	\begin{itemize}
		\item[(i)] the restriction of $\Pot$ to the interior of $C$ -and to the interior of any of its faces of positive dimension- is smooth, strictly convex and such that 
		\begin{equation}\label{eq:Guilcond} \Pot -\frac{1}{2}\sum_{a=1}^d \ell_a \log \ell_a \in C^{\infty}(C); 	 
		\end{equation}
		\item[(ii)] the Hessian of $\Pot$ is homogeneous of order $-1$ with respect to the natural $\R^+$--action on $\kt^*$ (i.e radial vector field $\sum_{i=0}^{n} x_i\frac{\del }{\del x_i}$). 
	\end{itemize}
	In that case, the Reeb vector field $\xi = (\xi_0, \ldots, \xi_n) \in C^*_0 \subset \kt\cong  \R^{n+1}$ of $g_G$ is given in coordinates as $$\xi_j=2\Pot_{ij} x_i.$$
\end{proposition}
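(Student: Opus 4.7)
The plan is to split the biconditional into two directions and treat the smoothness condition (i), the homogeneity condition (ii), and the Reeb formula as three essentially independent pieces, each corresponding to one geometric property of the K\"ahler cone $(X,J,\omega,r\p_r)$.

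\textbf{Forward direction.} Starting from a smooth $\bT$-invariant compatible K\"ahler cone metric $(g,J,\omega)$ on $X$, I would first restrict to the dense open subset $X_0 = \mu^{-1}(C_0)$ where $\bT$ acts freely. There action--angle coordinates $(x,\theta)$ exist, and the Guillemin reduction procedure (cf.\ \cite{guillMET, H2FII}) yields a strictly convex $G \in C^\infty(C_0)$ realizing $g$ via \eqref{eq:METSP} and $J$ via \eqref{eq:CXSP}. The boundary behavior \eqref{eq:Guilcond} is then obtained from local Delzant-type models: near any point of a toric divisor $D_a$, an equivariant chart trivializes the transverse direction as $\C$ with its flat metric, whose symplectic potential is $\tfrac{1}{2}x\log x$. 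The good-cone condition guarantees that contributions from different facets patch consistently, forcing $G - \tfrac{1}{2}\sum_a \ell_a\log\ell_a$ to extend smoothly across all faces of $C$.

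For the cone condition, I would combine $\mL_{r\p_r}\omega = 2\omega$ with torus-invariance of $r\p_r$: in action--angle coordinates this forces $r\p_r = 2\sum_i x_i\p_{x_i}$. Writing the Reeb vector $\xi = Jr\p_r \in \kt$ as $\xi = \sum_j\xi_j\p_{\theta_j}$ with constant coefficients and applying $J$ via \eqref{eq:CXSP} gives
\[ r\p_r = -J\xi = \sum_{j,k}\xi_j G^{jk}\p_{x_k}, \]
so $\sum_j\xi_j G^{jk} = 2x_k$. Inverting the Hessian produces $\xi_j = 2G_{jk}x_k$, which is the announced Reeb formula. Differentiating this constant-valued identity with respect to $x_l$ yields $\sum_k x_k G_{jkl} = -G_{jl}$, i.e.\ Euler's identity expressing that each entry $G_{ij}$ is homogeneous of degree $-1$, proving (ii).

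\textbf{Converse.} Given $G$ satisfying (i) and (ii), Guillemin's theorem shows that \eqref{eq:METSP}--\eqref{eq:CXSP} define a smooth $\bT$-invariant compatible K\"ahler metric on $(X,\omega)$. Euler's identity applied to (ii) then shows that the quantity $\xi_j := 2G_{jk}x_k$ is independent of $x$, so $\xi := \sum_j\xi_j\p_{\theta_j}$ is a well-defined element of $\kt$. A direct computation using $G^{ij}G_{jk} = \delta^i_k$ gives $-J\xi = 2\sum_k x_k\p_{x_k}$; taking $r\p_r$ to be this vector field, one checks $\mL_{r\p_r}\omega = 2\omega$ and $Jr\p_r = \xi$, so $(X,\omega,J,r\p_r)$ acquires the structure of a K\"ahler cone with Reeb vector $\xi$. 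Strict convexity of $G$ combined with $\xi \in C^*_0$ ensures completeness of the radial flow and thus the genuine cone structure.

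\textbf{Main obstacle.} The delicate step is the boundary regularity in (i): simultaneously matching the singular behavior of $G$ along all facets and their higher-codimension intersections requires carefully chosen equivariant Delzant charts on each stratum of $C$, and the compatibility of the inward normals $v_a$ on shared faces is exactly what the good-cone condition enforces. The homogeneity statement and the Reeb formula, by contrast, are purely linear-algebraic consequences of the cone condition once the Guillemin description on $X_0$ is available.
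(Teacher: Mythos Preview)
The paper does not give its own proof of this proposition: it is stated with attributions to \cite{abreuOrbifold,H2FII,guillMET,MSY_toric} and used as background, with no argument supplied. Consequently there is nothing in the paper to compare your proposal against; you have written a sketch of the standard proof from those references, whereas the authors simply quote the result.

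Your outline is broadly correct and follows the usual route (Guillemin boundary behaviour for (i), Euler homogeneity for (ii), and the linear-algebra identity for the Reeb formula). One small point of looseness: the step ``$\mL_{r\p_r}\omega=2\omega$ together with torus-invariance forces $r\p_r=2\sum_i x_i\p_{x_i}$'' does not by itself rule out $\p_{\theta}$-components of $r\p_r$; torus-invariance only makes the coefficients $\theta$-independent. You recover the correct conclusion anyway via $r\p_r=-J\xi$ with $\xi\in\kt$, so the argument closes, but you might want to reverse the order of those two sentences so that the pure-$\p_x$ form of $r\p_r$ is derived rather than asserted. Also, in the converse direction you invoke $\xi\in C^*_0$ without justification; this follows from $\langle\xi,x\rangle=2G_{ij}x_ix_j>0$ by strict convexity, which is worth making explicit.
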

 
 \begin{definition}\label{def:SYMPLPOTsmooth}
  	We denote $$\mS_\xi(C,\ell):=\{G : C_0\ra \R \,|\, G \mbox{ satisfies } (i), (ii) \mbox{ and } \xi_j=2\Pot_{ij} x_i \mbox{ as in Proposition } \ref{prop:SYMPPOT} \}$$  the space of $\xi$--\emph{symplectic potentials} on $C$ with respect to $\ell_1,\dots, \ell_d$. 
 \end{definition}

A nice fact (see again \cite{guillMET, CDG, MSY_toric}) is that  \( (x, \theta) \mapsto (y =DG (x), \theta) \) are complex coordinates on the tangent space, meaning that $z=y+i\theta$ are local holomorphic coordinates on $X_0$. If we let   
$$z_k = \log w_k = y_k + i \theta_k , $$ 
then $(w_0,\dots w_n)$ are global coordinates on $X_0\cong (\C^*)^{n+1}$. The Legendre transform 
\(F\) of \(G\), defined by
\[F(y) + G(x) = \langle x, y \rangle,  \hspace{3mm} \forall (x,y)\in C_0\times \kt  ,\] 
is a {\it K\"ahler potential} of $\omega$. Indeed
\[\omega =\sum_{k=0}^n dx_k\wedge d\theta_k = \sum_{k=0}^n d (\frac{\partial F}{\partial y_k})\wedge d\theta_k = \sum_{j, k=0}^n \frac{\partial^2 F}{\partial y_j\partial y_k}dy_j \wedge d\theta_k  =2 i \dd F  ;\]
as follows from the identity $x= DF (y)$.

\begin{remark}
Given another symplectic potential $\tilde{G} \in  \mS_\xi(C,\ell)$, the map $$\Phi_{G\tilde{G}}:=(D\tilde{G})^{-1} \circ (D{G}) : C_0\ra C_0$$ extends as a smooth $\bT$--equivariant diffeomorphism of $X$ such that $ \Phi_{G\tilde{G}}^* J_G = J_{\tilde{G}}$ and  
$\Phi_{G\tilde{G}}^* \omega=  \omega + 2i\dd f$ where $f$ is a smooth homogeneous (degree \(0\)) $\bT$--invariant function on $X$, see~\cite{H2FII, don:Large, MSY_toric}. This is one way to get that any two $\bT$--invariant smooth complex structures on a toric symplectic cone manifold $(X,\omega)$ are biholomorphic.
\end{remark}

\begin{prop_def} The {\it Reeb cone} $C^*_0$ is the interior of the dual moment cone, that is $C^*_0:=\{\xi\in \kt \,|\, \langle \xi, x \rangle >0,\, \forall x\in C\}$. Any smooth toric K\"ahler cone metric on $(X,\omega)$  has a Reeb vector field lying in $C^*_0$ and any $\xi\in C^*_0$ is the Reeb vector field of a smooth toric K\"ahler cone metric on $(X,\omega)$.   
\end{prop_def}

\subsubsection{Toric K\"ahler metric with conical singularities along a divisor}

For any $a\in\{1,\dots, d\}$ we denote the divisor $D_a:=\mu^{-1}(\ell_a^{-1}(0)\cap C )$ in $X$.  
Recall that, as proved in~\cite[\S 6.3]{RKE_legendre}, for any fixed $a\in\{1,\dots, d\}$, the tensor $g_G$ given by Equation \eqref{eq:METSP} extends as a metric on $X_0\cup D_a$ with cone angle $2\pi\beta_a$ along the divisor $D_a$ provided that 
\begin{equation} \label{eq:CONESING}
	G- \frac{1}{2} \beta_a^{-1}\ell_a \log \ell_a \; \in C^\infty\left(C_0 \cup \left(\ell_a^{-1}(0)\setminus \cup_{b \neq a} \ell_b^{-1}(0)\right) \right).	
\end{equation}
Precisely, assuming that $$G=\frac{1}{2} \beta_a^{-1}\ell_a \log \ell_a +f$$ is convex on \(C_0 \cup \left(\ell_a^{-1}(0)\setminus \cup_{b \neq a} \ell_b^{-1}(0)\right)\) with $f$ smooth on $C_0 \cup \left(\ell_a^{-1}(0)\setminus \cup_{b \neq a} \ell_b^{-1}(0)\right)$, then $$g_G = \frac{1}{2\beta_a x_a} dx_a^2 + 2\beta_a x_a d\theta_a^2  + C^{\infty} = \beta_a^2 |z_0|^{2(\beta_a-1)}dz_0\wedge d\bar{z}_0 +C^\infty . $$ 
Here, up to a linear change of coordinates, we put $x_a:=\ell_a(x)$, \(\theta_a =\langle \ell_a,\theta\rangle\) and $z_0= (2x_a/\beta_a)^{1/2\beta_a}e^{i\theta_a}$. The \(C^{\infty}\) term on the r.h.s. is understood as smooth in terms of \((|z_0|^{\beta_a-1}z_0, z_1, \ldots, z_n)\). The same comments apply at the intersection of two or more divisors \(D_{a_1} \cap \ldots \cap D_{a_k}\) for \(1 \leq k \leq n\). 

In this paper, we  say that the toric metric \(g_G\) has cone angle \(2\pi\beta_a\) along \(D_a\) if its symplectic potential satisfies the smoothness condition in Equation \eqref{eq:CONESING}. In practice, when proving existence theorems, one has to relax the smoothness condition (see \cite{RKE_legendre}, \cite{SongWang}) and consider \(C^{k, \alpha}\) metrics. At the end, regularity results show that these weaker notions of conical singularities satisfy the smoothness condition provided they solve the K\"ahler-Einstein equations on the complement of the conical divisors. 

\begin{remark}
	Note that for making sense of $g_G$ as a  metric on $X_0$ we are using the action-angle coordinates of a fixed symplectic form $\omega$ which admits a smooth extension to $X$. 
	There are two differential structures on \(X\), meaning that there are two atlas; one in which our fixed symplectic form extends smoothly and the other in which the complex structure does. The last differential structure depends on the cone angles \(\beta=(\beta_1, \ldots, \beta_d)\). These two differential structures are equivalent, by diffeormorphisms modelled on \(re^{i\theta} \to r^{1/\beta}e^{i\theta}\) in transverse directions to the conical divisors. In the paper, when we say smooth, we mean smooth with respect to the atlas of our fixed symplectic manifold.
\end{remark}

\begin{definition}\label{def:CONEMETRIC} Let $(X,\omega)$ be a toric symplectic cone whose moment cone $(C,\ell)$ is strictly convex. A compatible toric K\"ahler metric $g$ with cone angles $2\pi \beta_a$ along the divisors $D_a$ is a toric K\"ahler cone metric on $X_0 \cong C_0\times \bT^{n+1}$ such that for each $a\in\{1,\dots, d\}$ $$g - \left( \frac{1}{2\beta_a \ell_a(x)} d\ell_a^2 + 2\beta_a \ell_a(x) d\theta_a^2\right) ,$$
where \(\theta_a=\langle \ell_a,\theta\rangle\), extends smoothly (in the symplectic sense) on $X_0\cup \left(D_a \setminus \cup_{b \neq a}D_b\right)$ and restricts in a positive definite tensor on $TD_a$. Similarly, for \(1 \leq k \leq n\) and \(I=\{a_1, \ldots, a_k\} \subset \{1, \ldots, d\} \) with \(D_{a_1} \cap \ldots \cap D_{a_k}\) non-empty, we require
$$g - \sum_{j=1}^{k} \left( \frac{1}{2\beta_{a_j} \ell_{a_j}(x)} d\ell_{a_j}^2 + 2\beta_{a_j} \ell_{a_j}(x) d\theta_{a_j}^2\right)$$
to extend smoothly over $X_0\cup \left( \cup_{a \in I} D_a \setminus \cup_{b \neq I}D_b\right)$.	
\end{definition}

By the observation above and putting together the work of many people (in the compact K\"ahler case \cite{abreu, {abreuOrbifold}, H2FII, don:scalar,don:extMcond,don:Large, guillMET, RKE_legendre}, made clear in the K\"ahler cone case by \cite{MSY_toric}, see also~\cite{Abreu_sas,nonUNIQcscS}) we get that

 \begin{proposition}\label{prop:Conical=symp}\label{p:corresPOT} Let  \(\beta_1, \ldots, \beta_d \in \R_{>0}\). For $G\in C^0(C)\cap C^\infty(C_0)$, the tensor $g_G$ of \eqref{eq:METSP} is a toric K\"ahler cone metric with conical singularities of angles $2\pi\beta_a$ along the divisors $D_a$ if and only if: 
 	\begin{itemize}
		\item[(i)] the restriction of $\Pot$ to the interior of $C$, and to the interior of any of its faces of positive dimension, is smooth and strictly convex and such that 
		\begin{equation}\label{eq:CDTbetasmooth}\Pot -\frac{1}{2}\sum_{a=0}^d \beta_a^{-1} \ell_a \log \ell_a \, \in C^\infty(C) ;
		\end{equation} 
		\item[(ii)] the Hessian of $\Pot$ is homogeneous of order $-1$ with respect to the natural $\R^+$--action on $\kt^*$ (i.e radial vector field $\sum_{i=0}^{n} x_i\frac{\del }{\del x_i}$).
	\end{itemize} In that case the Reeb vector field $\xi = (\xi_0, \ldots, \xi_n) \in C^*_0 \subset \kt\cong  \R^{n+1}$ of $g_G$ is given in coordinates as $$\xi_j=2\sum_{i=0}^n\Pot_{ij} x_i. $$  
\end{proposition}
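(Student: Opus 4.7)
The plan is to combine three ingredients: the smooth case of Proposition \ref{prop:SYMPPOT}, the local computation underlying equation \eqref{eq:CONESING} which in turn reduces to the basic example of Section \ref{s:basicEX}, and the definition of a conical metric (Definition \ref{def:CONEMETRIC}). I would handle the two implications separately, but each hinges on the same local model: near a relative-interior point of a face of codimension $k$ cut out by $\ell_{a_1}=\ldots=\ell_{a_k}=0$, the good cone condition on $C$ lets me choose coordinates in which these $k$ linear forms become the first $k$ coordinates $x_{a_1},\ldots,x_{a_k}$, so the analysis factors through the flat conical product $\C_{\beta_{a_1}}\times\ldots\times\C_{\beta_{a_k}}$ of the basic example.

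For the ``if'' direction, assume $\Pot$ satisfies (i) and (ii). On the open orbit $X_0$, condition (i) guarantees that $g_\Pot$ defined by \eqref{eq:METSP} is a smooth toric K\"ahler metric there, and condition (ii), through the identity $\mathcal{L}_{2\sum x_i\p_{x_i}}\omega = 2\omega$, makes $r\p_r = 2\sum x_i\p_{x_i}$ a radial cone field; applying $J_\Pot$ via \eqref{eq:CXSP} gives $\xi_j = 2\sum_i \Pot_{ij}x_i$, which is constant on $X_0$ precisely because the $\Pot_{ij}$ are $(-1)$-homogeneous. To match Definition \ref{def:CONEMETRIC} near a single facet I write $\Pot = \frac{1}{2\beta_a}\ell_a\log\ell_a + f$ with $f$ smooth across $\{\ell_a=0\}\setminus\bigcup_{b\neq a}\{\ell_b=0\}$, and read off the Hessian in coordinates with $\ell_a=x_a$; the $(x_a,\theta_a)$-block reproduces the model conical expression $\frac{1}{2\beta_a x_a}dx_a^2+2\beta_a x_a d\theta_a^2$ of Section \ref{s:basicEX} up to smooth terms, while the transverse block remains smooth. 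Higher-codimension strata are handled identically in the aligned coordinates chosen above, producing the product conical model plus smooth remainder required by Definition \ref{def:CONEMETRIC}.

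For the converse, let $g$ be a compatible toric K\"ahler cone metric with cone angles $2\pi\beta_a$ along $D_a$. On $X_0$ the metric is smooth toric K\"ahler, so Guillemin's theorem yields a symplectic potential $\Pot$; the cone condition $\mathcal{L}_{r\p_r}\omega = 2\omega$ combined with $\xi = J(r\p_r)$ forces $(-1)$-homogeneity of the Hessian, giving (ii). Comparing the expression of $g_\Pot$ in action-angle coordinates with the local model required by Definition \ref{def:CONEMETRIC} forces $\Pot - \frac{1}{2\beta_a}\ell_a\log\ell_a$ to extend smoothly across the open stratum of each facet; iterating on deeper strata yields (i), and strict convexity on each face's interior comes from positive-definiteness of the restriction of $g$ to the corresponding submanifold $D_{a_1}\cap\ldots\cap D_{a_k}$, which is itself toric K\"ahler. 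I expect the main obstacle to be this higher-codimension stratum analysis, since the Hessian of $\frac{1}{2}\sum_a\beta_a^{-1}\ell_a\log\ell_a$ is singular along several facets at once and its inverse entries must conspire to leave only the expected product-conical model; the good cone condition is what enables the simultaneous block-diagonalization that reduces everything to the iterated one-facet picture, and the verification is essentially the content of \cite{RKE_legendre}.
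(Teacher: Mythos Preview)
Your proposal is correct and matches the paper's approach. The paper does not give a detailed proof of this proposition; instead it states the result as a direct consequence of ``the observation above and putting together the work of many people'' --- namely the smooth case (Proposition \ref{prop:SYMPPOT}), the local computation \eqref{eq:CONESING} from \cite{RKE_legendre}, and Definition \ref{def:CONEMETRIC} --- which is precisely the three-ingredient strategy you outline, and your sketch of the two implications is an accurate expansion of what that citation-based proof entails.
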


	This prompts us to define the following set of functions 
 \begin{definition}\label{def:SYMPLPOT}
  	We denote $$\mS_\xi(C,\ell,\beta):=\{G \in C^0(C)\cap C^\infty(C_0) \,|\, G \mbox{ satisfies } (i), (ii)\mbox{ of Proposition } \ref{prop:Conical=symp} \mbox{ and } \xi_j=2\Pot_{ij} x_i \}$$  the space of $\xi$--\emph{symplectic potentials} on $C$ with respect to $\ell_1,\dots, \ell_d$ and cone angles \(2\pi\beta_1, \ldots, 2\pi\beta_d\).
 \end{definition}

\begin{remark}\label{r:NaturalLABELLING}
	In Definition \ref{def:SYMPLPOT} we have distinguished the natural integer labelling \(\ell_1, \ldots, \ell_d\) -determined by the geometric (complex, algebraic or symplectic) structure of \(X\)- from the cone angles \(\beta_1, \ldots, \beta_d\). Alternatively, one can mix these, by letting  \(\tilde{\ell}_a = \beta_a^{-1} \ell_a \) for \(a=1, \ldots, d\). This way \(\tilde{\ell}_1, \ldots, \tilde{\ell}_d\) gives another labelling of \(C\), and we will also write \(\mS_{\xi}(C, \tilde{\ell}) = \mS_{\xi}(C, \ell, \beta)\). 
\end{remark}

We define the $\beta$--Guillemin potential as $$G^{can} :=  \frac{1}{2}  \sum_a \beta_a^{-1} \ell_a\log\ell_a $$ and we observe that $G^{can} \in \mS_{\xi^{can}}(C,\ell,\beta)$ for \[\xi^{can} = \sum_a \beta_a^{-1} \ell_a . \] 
For later use, we record the following
\begin{equation} \label{eq:GUILLp1}
\frac{\p}{\p x_i}  G^{can}= \frac{1}{2} \sum_a \beta_a^{-1} (1+\log \ell_a)v^a_i , 
\end{equation}
\begin{equation} \label{eq:GUILLp2}
\frac{\p^2}{\p x_i \p x_j} G^{can} = \frac{1}{2} \sum_a \beta_a^{-1} v^a_i v^a_j \ell_a^{-1} .
\end{equation}

\begin{remark} In the case where $\beta_1 =\cdots= \beta_d=1$, it was shown in \cite{guillMET}, see also \cite{CDG}, that $G^{can}$ is the symplectic potential of the toric K\"ahler metric obtained as the K\"ahler reduction of the flat metric on $\C^d$ through the  Delzant construction. It seems natural to think that for general angles $\beta \in \R^d_{>0}$, the $\beta$--Guillemin potential is the symplectic potential of the toric K\"ahler metric obtained as the K\"ahler reduction of the flat metric on $\C_\beta^d$ of \S \ref{s:basicEX} this way.    
\end{remark}

Let us set
\[\ell_{\infty} = \sum_a \beta_a^{-1} \ell_a \]
and, for \(\xi \in C^*_0\), 
\[G_{\xi} = \frac{1}{2} \ell_{\xi} \log \ell_{\xi} - \frac{1}{2} \ell_{\infty} \log \ell_{\infty} . \]
We introduce a canonical symplectic potential with cone angles \(2\pi\beta_a\) along \(D_a\) and prescribed Reeb vector, 
\begin{equation} \label{Geq}
G^{can}_{\xi} := G^{can} + G_{\xi} \in \mathcal{S}_{\xi}(C, \ell, \beta) .
\end{equation} The conditions $(i)$ and $(ii)$ defining $\mathcal{S}_{\xi}(C, \ell, \beta)$ translate as 
\begin{corollary}\label{p:MSYprop} $\mS_\xi(C,\ell, \beta)$ coincides with the subspace of strictly convex functions on the interior of $C$ (and interior of faces of $C$) that can be written as $$G = \frac{1}{2}\sum_{a=1}^d \beta_a^{-1} \ell_a \log \ell_a - \frac{1}{2}\ell_\infty \log \ell_\infty + \frac{1}{2}\xi\log \xi  + f $$ where $\ell_\infty =\sum_{a=1}^{d} \beta_a^{-1} \ell_a$ and $f\in C^{\infty}(C)$ is homogeneous of order $1$ with respect to the radial vector field $\sum_{i=0}^n x_i\frac{\del}{\del x_i}$.   
\end{corollary}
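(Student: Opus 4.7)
The plan is to establish the two inclusions by taking the canonical potential $G^{can}_\xi$ defined in Equation~\eqref{Geq} as the ``model'' element of $\mS_\xi(C,\ell,\beta)$ and showing every other element differs from it by a smooth, degree-one homogeneous function on $C$. Thus the argument has three steps: $(\mathrm{a})$ verify $G^{can}_\xi \in \mS_\xi(C,\ell,\beta)$; $(\mathrm{b})$ for arbitrary $G\in\mS_\xi(C,\ell,\beta)$ show $f:=G-G^{can}_\xi$ lies in $C^\infty(C)$ and is homogeneous of order~$1$; $(\mathrm{c})$ observe the converse is automatic as soon as $G^{can}_\xi+f$ is strictly convex.

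For step~(a), the singular behavior of $G^{can}_\xi$ comes entirely from $G^{can}$, since $\ell_\xi=\langle\xi,\cdot\rangle$ and $\ell_\infty=\sum_a\beta_a^{-1}\ell_a$ are both strictly positive on $C$ (because $\xi\in C^*_0$ and $C$ does not contain its apex), so $G_\xi=\tfrac12(\ell_\xi\log\ell_\xi-\ell_\infty\log\ell_\infty)$ is smooth on $C$. Hence condition~(i) of Proposition~\ref{prop:Conical=symp} holds. Condition~(ii) follows because each summand $\tfrac12\ell\log\ell$ (for $\ell$ linear, positive) has Hessian $\tfrac12 \ell^{-1}\,d\ell\otimes d\ell$, which is homogeneous of order $-1$. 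The Reeb identity is the core computation: using \eqref{eq:GUILLp2} one checks
\[
2x_i(G^{can})_{ij} \;=\; \sum_a \beta_a^{-1} v^a_j \;=\; \xi^{can}_j,
\]
and the analogous identity $2x_i\bigl(\tfrac12\ell\log\ell\bigr)_{ij}=v_j$ applied to $\ell=\ell_\xi$ and $\ell=\ell_\infty$ yields
\[
2x_i(G_\xi)_{ij} \;=\; \xi_j - \xi^{can}_j,
\]
so the two contributions combine exactly to the prescribed Reeb vector field $\xi$.

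For step~(b), condition~(i) for $G$ and for $G^{can}_\xi$ together with the smoothness of $G_\xi$ on $C$ give
\[
f \;=\; \Bigl(G-\tfrac12\!\sum_a \beta_a^{-1}\ell_a\log\ell_a\Bigr) - \Bigl(G^{can}-\tfrac12\!\sum_a \beta_a^{-1}\ell_a\log\ell_a\Bigr) - G_\xi \;\in\; C^\infty(C).
\]
Both $G$ and $G^{can}_\xi$ share the same Reeb vector $\xi$, so $2x_i f_{ij}=0$. Differentiating $x_i f_i$ then yields $\partial_j(x_i f_i - f)=x_i f_{ij}=0$, so $x_i f_i - f$ is a constant; absorbing this constant into $f$ (harmless since the space of symplectic potentials is only affected by affine functions through its Hessian) produces the Euler relation $x_i f_i = f$, which is exactly degree-one homogeneity.

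Step~(c) is a direct verification: if $f\in C^\infty(C)$ is homogeneous of order $1$ and $G^{can}_\xi+f$ is strictly convex on $C_0$ and on interiors of faces, then condition~(i) follows from smoothness of $f$ and $G_\xi$, condition~(ii) follows because degree-one homogeneity of $f$ implies its Hessian has degree $-1$, and the Reeb identity is preserved since $x_i f_{ij}=0$ by Euler. The main obstacle I expect is the bookkeeping in step~(a)—checking that the Reeb contributions of $G^{can}$ and $G_\xi$ conspire to produce exactly $\xi$ rather than some other vector in $C^*_0$—and making sure, in step~(b), that the additive-constant ambiguity arising from the Reeb equation is correctly absorbed to yield genuine, not merely ``up to constant,'' degree-one homogeneity of $f$.
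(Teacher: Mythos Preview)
Your proof is correct and essentially matches the paper's approach; the paper does not actually spell out a proof of this corollary, merely asserting that ``the conditions $(i)$ and $(ii)$ defining $\mathcal{S}_\xi(C,\ell,\beta)$ translate as'' the stated description, so your three-step argument is precisely the natural verification the paper leaves to the reader. Your computation of the Reeb contributions in step~(a) and the Euler-relation argument in steps~(b) and~(c) are exactly what is needed, and your remark about absorbing the additive constant is the standard way to handle the fact that symplectic potentials are only relevant through their Hessians.
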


Symplectic potentials in \(\mS_{\xi}(C, \ell, \beta)\) correspond to symplectic potentials on compact cross section labelled polytopes \(P_{\xi}\). This will be crucial in our application. We recall the following notion.

\begin{definition} \label{d:lapPOLsympPOT} Given a convex compact polytope $P$ in an affine space $H$ given by $P:=\{x\in H\,|\, \tilde{\ell}_1(x)\geq 0,\dots,\tilde{\ell}_d(x)\geq 0 \}$, a \emph{symplectic potential} on $P$ with respect to $\tilde{\ell}_1,\dots, \tilde{\ell}_d$ is a continuous function $\pot\in C^0(P)$ such that its restriction to the interior of $P$, or any of its faces, is smooth and convex and such that $$\pot -\frac{1}{2}\sum_{a=1}^d \tilde{\ell}_a \log \tilde{\ell}_a$$ is smooth on $P$. We denote $\mS(P,\tilde{\ell})$ the space of \emph{symplectic potentials} on $P$ with respect to $\tilde{\ell}_1,\dots, \tilde{\ell}_d$.  
\end{definition}   
 Note that, as linear functions on $\kt^*$,  $\beta_1^{-1}\ell_1,\dots, \beta_d^{-1}\ell_d \in\kt$ define affine functions on the affine hyperplane $\{x\in \kt^* \,|\, \langle x,\xi \rangle =1/2 \}$ which plays the role of $H$ in the Definition~\ref{d:lapPOLsympPOT}.
\begin{corollary} \label{c:RESTpotential}
A $\xi$--symplectic potential $\Pot\in\mS_{\xi}(C, \ell, \beta) $ on $C$ is uniquely determined by its restriction to $P_\xi:=C\cap \{x\in \kt^* \,|\, \langle x,\xi \rangle =1/2 \}$, which turns out to be a symplectic potential on $P_\xi$ with respect to the labelling induced by $\beta_1^{-1}\ell_1,\dots, \beta_d^{-1}\ell_d$. 
\end{corollary}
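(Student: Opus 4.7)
The plan is to prove two statements: (a) the restriction $u := G|_{P_\xi}$ satisfies the conditions of Definition \ref{d:lapPOLsympPOT} with respect to the labelling $\tilde\ell_a := \beta_a^{-1}\ell_a$, and (b) the map $G \mapsto u$ is injective.

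For (b), I would use the Reeb condition $\xi_j = 2\sum_i G_{ij} x_i$ of Proposition \ref{prop:Conical=symp} to derive the Euler-type identity $\sum_i x_i G_i - G = \tfrac{1}{2}\langle\xi,x\rangle$ on the connected set $C_0$. Indeed, differentiation in $x_j$ gives
\[
\partial_j\!\Bigl(\sum_i x_i G_i - G\Bigr) \;=\; \sum_i x_i G_{ij} \;=\; \tfrac{1}{2}\xi_j,
\]
so the identity holds up to an additive constant $c$. To check $c=0$, I would decompose $G = G^{can}_\xi + f$ per Corollary \ref{p:MSYprop} with $f$ smooth and $1$-homogeneous: Euler's theorem kills the contribution of $f$, while on $G^{can}_\xi$ the elementary computation $(\sum_i x_i \partial_i - \mathrm{id})(\ell_b \log \ell_b) = \ell_b$ yields $\tfrac{1}{2}\ell_\infty + \tfrac{1}{2}\langle\xi,x\rangle - \tfrac{1}{2}\ell_\infty = \tfrac{1}{2}\langle\xi,x\rangle$, as desired. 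Restricting the identity to a ray $x = tp$ with $p \in P_\xi$ produces the first-order ODE $t g'(t) - g(t) = t/4$ for $g(t) := G(tp)$, whose solution is
\[
G(tp) \;=\; \tfrac{t \log t}{4} + t\, G(p), \qquad t>0,\;\; p \in P_\xi,
\]
so $G$ on $C_0$ is uniquely recovered from $u$.

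For (a), strict convexity of $u$ on the interior of $P_\xi$ and of each of its faces follows at once from the strict convexity of $G$ on the interior of the corresponding face of $C$ by restriction to the affine hull, and smoothness on the interior of each face of $P_\xi$ is inherited from the analogous smoothness of $G$ on the corresponding face of $C$ (condition $(i)$ of Proposition \ref{prop:Conical=symp}). For the regularity condition $u - \tfrac{1}{2}\sum_a \tilde\ell_a \log \tilde\ell_a \in C^\infty(P_\xi)$, the key observation is the elementary identity
\[
\tilde\ell_a \log \tilde\ell_a \;=\; \beta_a^{-1}\ell_a \log \ell_a - \bigl(\beta_a^{-1}\log\beta_a\bigr)\,\ell_a,
\]
whose second term is affine and hence smooth; together with condition $(i)$ of Proposition \ref{prop:Conical=symp} and the fact that the remaining pieces $-\tfrac{1}{2}\ell_\infty \log \ell_\infty$ and $\tfrac{1}{2}\langle\xi,x\rangle\log\langle\xi,x\rangle$ in $G^{can}_\xi$ restrict to smooth functions on $P_\xi$ (since $\ell_\infty>0$ and $\langle\xi,x\rangle=1/2$ there), this gives the claim.

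The main potential obstacle is checking that the additive constant $c$ in the Euler-type identity vanishes; this is where the explicit form of $G^{can}_\xi$ provided by Equation \eqref{Geq} and Corollary \ref{p:MSYprop} is essential, and where an abstract argument would get stuck. Once $c=0$ is in hand, both the reconstruction formula along rays and the verification of the symplectic potential axioms on $P_\xi$ amount to routine bookkeeping.
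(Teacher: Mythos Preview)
Your proof is correct. The paper gives no explicit argument for this corollary, treating it as an immediate consequence of Corollary~\ref{p:MSYprop}: since $G - G^{can}_\xi$ is smooth and $1$-homogeneous, it is determined by its restriction to the cross-section $P_\xi$, and the boundary behaviour of $u$ follows by restricting condition~(i). Your approach is essentially the same, but you reach the reconstruction formula through the Euler-type identity and the radial ODE rather than by invoking $1$-homogeneity directly; this is a slightly longer route (and the step pinning down $c=0$ still relies on Corollary~\ref{p:MSYprop}), but it has the merit of producing the explicit formula $G(tp)=\tfrac{t\log t}{4}+t\,G(p)$, which the paper leaves implicit.
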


\begin{remark} For any $\beta\in \R_{>0}^d$ and potential $G\in \mS_\xi(C, \ell, \beta)$, $(g_G,\omega)$ is a $\kt$--invariant  K\"ahler cone structure on $(C_0\times \kt, \omega)$  with Reeb vector field $\xi\in \kt$ over the non compact Sasakian manifold obtained as the link $\{r=1\}\simeq \mathring{P_\xi}\times \kt$ where $r^2= g(\xi,\xi)$. Of course, any local computation in the K\"ahler cone/Sasaki context is still valid here and thus, for example $$\omega = \frac{i}{2}\dd r^2$$ which implies that $r^2/4$ is, up to addition by an affine-linear function of the complex coordinates $y=d_x\Pot$, the Legendre transform of $\Pot$. 
In this situation, there is no preferred lattice and no notion of quasi-regular Reeb vector field but we always have a (non-compact) K\"ahler reduction  $(\mathring{P_\xi}\times (\kt/\R \xi), \check{\omega}, \check{J})$ so that $$\mathring{P_\xi}\times (\kt/\R \xi) = x^{-1}(1/2)/ \R \xi = \{r=1\}/ \R \xi$$ and the pull back of $\check{\omega}$ on $\{r=1\}$ is the restriction of $\frac{i}{2} \dd r^2$ which coincides with $i \dd \log r^2$ on $\{r=1\}$.  
\end{remark}

\subsection{Scalar and Ricci curvature of symplectic potentials and log Futaki invariant}\label{ss:ScalFUT}

Each Reeb vector field $\xi \in C_0^*$, determines a hyperplane  \(H_{\xi} = \{2 \langle \xi, x \rangle =1 \}\) and a corresponding polytope $P_\xi$ which is labelled by \(\beta_1^{-1} \ell_1, \dots,  \beta_d^{-1} \ell_d\). In this section we recall the {\it Futaki invariant of the labelled polytope} $(P_\xi,\beta_1^{-1} \ell_1, \dots,  \beta_d^{-1} \ell_d)$ which provides an obstruction to the existence of scalar-flat metric on $(X,\omega,\xi)$ with conical angle $2\pi\beta_a$ along $D_a$, \cite{FOW, nonUNIQcscS, RKE_legendre}. Equivalently, this is an obstruction to the existence of scalar-flat potential in $\mS_\xi(C,\beta_1^{-1} \ell_1, \dots,  \beta_d^{-1} \ell_d)$. Note that this is just a convex affine translation of the classical Futaki invariant \cite{futaki}. It agrees with the {\it log Futaki invariant}, which arises in the more general setting of metrics with cone singularities along divisors (not necessarily toric) see ~\cite{hashimoto}.

In the next statement, we denote $(\Pot^{ij})$ the inverse Hessian of a potential $\Pot \in \mS_\xi(C,\ell,\beta)$. It is a smooth matrix valued function on $C$ by \cite{H2FII} and we recall the so-called Abreu formula which expresses the scalar curvature of the metric associated to a potential $G$ on \(X\) as  $$R_X = - \sum_{i,j=0}^n \frac{\del^2 \,\Pot^{ij}}{\del x_i \del x_j} .$$
The scalar curvature \(R_X\) can be, and will be, seen as a smooth function on $C$ or sometimes identified with its pull back on $X$, as a smooth $\bT$--invariant function. In the next claim, we state straightforward consequences of the computations in \cite{Abreu_sas}, see also \cite{nonUNIQcscS,MSY_toric}, which hold on $X_0$ and then apply directly on the singular toric K\"ahler metric of Definition \ref{def:CONEMETRIC}.     

\begin{proposition}\label{p:abreuFORMULA} A toric K\"ahler cone $(X^{n+1}, J, \omega)$ with Reeb vector field $\xi$ and conical singularity of angle $2\pi\beta_a$ along the divisor $D_a$ is scalar flat if and only the corresponding potential $\Pot$ on $C$ satisfies \begin{equation}\label{abreuFORMULAcone}R_X=-\sum_{i,j=0}^n \frac{\del^2 \,\Pot^{ij}}{\del x_i \del x_j}  =0\end{equation} and this happens if and only if the corresponding potential $\pot\in \mS(P_\xi, \ell)$ on $P_\xi$ satisfies \begin{equation}\label{abreuFORMULA}
	-\sum_{i,j=1}^{n} \frac{\del^2\, \pot^{ij}}{\del \tx_i \del \tx_j}  = n(n+1), \end{equation}
where \((\tx_1, \ldots, \tx_n)\) are coordinates con the affine hyperplane \(H_{\xi} = \{2 \langle \xi, x \rangle =1 \}\).	
\end{proposition}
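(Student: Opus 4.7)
The plan is to prove the proposition via two independent reductions. Both exploit the fact that, although $G\in\mS_\xi(C,\ell,\beta)$ has logarithmic singularities along the facets in the sense of \eqref{eq:CDTbetasmooth}, it is smooth and strictly convex on the interior $C_0$; hence $g_G$ is a bona-fide smooth K\"ahler metric on $X_0=\mu^{-1}(C_0)$, i.e.\ on the complement of the toric divisors. On $X_0$, the classical Abreu formula from \cite{abreu,guillMET,MSY_toric,Abreu_sas} gives $R_X=-\sum_{i,j=0}^n\partial^2 G^{ij}/\partial x_i\partial x_j$, and this smooth function represents the scalar curvature of the conical metric on $X\setminus\cup_a D_a$: indeed, by Definition~\ref{def:CONEMETRIC} and \eqref{eq:CONESING}, the transverse model near each divisor $D_a$ is the flat scalar-flat K\"ahler cone $\C_{\beta_a}$ of Section~\ref{s:basicEX}, so no distributional boundary contribution appears along the divisors. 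This yields the first equivalence between scalar-flatness and \eqref{abreuFORMULAcone}.

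The second reduction rewrites \eqref{abreuFORMULAcone} as the transverse Abreu equation \eqref{abreuFORMULA} on $P_\xi$. The plan is to introduce adapted coordinates $(\tx_1,\dots,\tx_n,s)$ on $\kt^*\cong\R^{n+1}$, where $s=2\langle\xi,x\rangle$ is the Reeb-radial variable and $(\tx_1,\dots,\tx_n)$ are affine coordinates on $H_\xi=\{s=1\}\supset P_\xi$. By Corollary~\ref{p:MSYprop}, the potential decomposes as
\begin{equation*}
G(x) \;=\; \tfrac12\,\ell_\xi\log\ell_\xi \;+\; \tu(\tx,s) \;+\; \text{(affine)},
\end{equation*}
where $\ell_\xi=\langle\xi,x\rangle=s/2$ and $\tu$ is the degree-one homogeneous extension of $u=G|_{P_\xi}$ to the cone generated by $P_\xi$. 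Writing $\mathrm{Hess}(G)$ as a block matrix in the variables $(\tx,s)$, using the $(-1)$-homogeneity of $\mathrm{Hess}(G)$ via Euler's identity $\sum_k x_k G_{ijk}=-G_{ij}$ to kill cross-terms, and inverting, one arrives (after taking the double divergence) at the pointwise identity on $P_\xi=\{s=1\}$
\begin{equation*}
-\sum_{i,j=0}^n\frac{\partial^2 G^{ij}}{\partial x_i\partial x_j}\bigg|_{P_\xi} \;=\; -\sum_{i,j=1}^n\frac{\partial^2 u^{ij}}{\partial\tx_i\partial\tx_j}\;-\;n(n+1).
\end{equation*}
Since $G^{ij}$ is homogeneous of degree $+1$ with respect to the Euler vector field on $\kt^*$, $R_X$ is homogeneous of degree $-1$ on $C_0$ and is therefore determined by its restriction to $P_\xi$; the equivalence between \eqref{abreuFORMULAcone} and \eqref{abreuFORMULA} follows.

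The main obstacle is the explicit block-inversion of $\mathrm{Hess}(G)$ in the $(\tx,s)$ variables, organized so that the contribution of the radial summand $\tfrac12\ell_\xi\log\ell_\xi$ collapses to the dimensional constant $n(n+1)$ while the remaining pieces assemble into the transverse double-divergence of $u^{ij}$. This manipulation is carried out in the smooth compact toric setting in \cite{Abreu_sas,MSY_toric,nonUNIQcscS}, and it transfers verbatim to the present situation because it takes place pointwise on $C_0$, where $G$ is smooth regardless of the cone angles: the $\beta_a$ enter only through the labelling $\tell_a=\beta_a^{-1}\ell_a$ of $P_\xi$ inherited by $u$ through Corollary~\ref{c:RESTpotential}, and not in the structural identity above.
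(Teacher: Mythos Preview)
Your proposal is correct and follows essentially the same approach as the paper. The paper does not give a self-contained proof of this proposition but simply states that it is a ``straightforward consequence of the computations in \cite{Abreu_sas}, see also \cite{nonUNIQcscS,MSY_toric}, which hold on $X_0$ and then apply directly on the singular toric K\"ahler metric of Definition~\ref{def:CONEMETRIC}''; your write-up is a faithful expansion of exactly this argument, making explicit the adapted $(\tx,s)$-coordinates, the homogeneity considerations, and the block-inversion that those references carry out.
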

Formula \eqref{abreuFORMULA} is the usual Abreu formula with a specific constant $n(n+1)$.

 Via the correspondence between metrics and symplectic potentials and thanks to Proposition~\ref{p:abreuFORMULA}, it makes sense to define \emph{scalar-flat symplectic potential} (respectively \emph{csc symplectic potential}) as a potential satisfying Equation \eqref{abreuFORMULAcone} (respectively \eqref{abreuFORMULA}). It also makes sense to say that a {\it symplectic potential is K\"ahler-Einstein}, as the Ricci form $\rho^\Pot$ of the associate metric $g_G$ on $C_0\times \bT$, see \eqref{eq:AAcoord}, is $$\mbox{Ric}(g_G)= \sum_{i,j,k=1}^{n} \frac{\del^2\, \Pot^{ij}}{\del x_i \del x_k} dx_j\wedge d\theta_k$$ with respect to action-angle coordinates. Using the computations in \cite{Abreu_sas} we can therefore specialize the latter Proposition to Ricci-flat metrics on $X$ as follows.   

\begin{proposition}\label{p:abreuFORMULAricci} A toric K\"ahler cone $(X^{n+1}, J, \omega)$ with Reeb vector field $\xi$ and conical singularity of angle $2\pi\beta_a$ along the divisor $D_a$ is Ricci-flat if and only the corresponding potential $\pot\in \mS(P_\xi,\ell,\beta)$ on $P_\xi$ is a \emph{K\"ahler--Einstein potential} with scalar curvature $n(n+1)$, that is satisfies \begin{equation}\label{e:KEpot}
	-\sum_{i=1}^{n} \frac{\del^2\, \pot^{ij}}{\del \tx_i \del \tx_k}  = (n+1)\delta_{jk}, \end{equation}
where \((\tx_1, \ldots, \tx_n)\) are coordinates con the affine hyperplane \(H_{\xi} = \{2 \langle \xi, x \rangle =1 \}\).	
\end{proposition}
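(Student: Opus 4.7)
The plan is to derive this proposition from the Ricci-form expression recalled just before its statement, in the same manner that Proposition~\ref{p:abreuFORMULA} was derived from the scalar-curvature Abreu formula~\eqref{abreuFORMULAcone}. The essential point is that Ricci-flatness of the cone $(X, g_G)$ is equivalent to a transverse K\"ahler--Einstein condition on the quotient of the link $S$ by its Reeb foliation, with a specific Einstein constant $n+1$ prescribed by the dimension.

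First, I would start from the formula
$$\mathrm{Ric}(g_G) = \sum_{i,j,k} \frac{\partial^2 G^{ij}}{\partial x_i \partial x_k}\, dx_j \wedge d\theta_k$$
valid on $X_0 = C_0 \times \bT$, which identifies Ricci-flatness on the smooth locus with
$$\sum_i \frac{\partial^2 G^{ij}}{\partial x_i \partial x_k} \equiv 0 \qquad \text{for all } j, k.$$

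Next, I would invoke the homogeneity condition (ii) of Proposition~\ref{prop:Conical=symp}: the Hessian $(G_{ij})$ is homogeneous of degree $-1$ in $x$, hence its inverse $(G^{ij})$ is homogeneous of degree $+1$. Combined with the identity $\xi_j = 2 G_{ij} x_i$, this allows the matrix of second derivatives of $G^{ij}$ on $C_0$ to be rewritten in terms of the transverse potential $\pot = G|_{P_\xi} \in \mS(P_\xi, \tell)$ supplied by Corollary~\ref{c:RESTpotential}. Concretely, introducing affine coordinates $(\tx_1, \ldots, \tx_n)$ on $H_\xi$ and splitting off the Reeb direction, the same manipulation that reduces \eqref{abreuFORMULAcone} to \eqref{abreuFORMULA} in Proposition~\ref{p:abreuFORMULA} should give, at a point of $P_\xi$,
$$\sum_{i=0}^{n} \frac{\partial^2 G^{ij}}{\partial x_i \partial x_k} = \sum_{i=1}^{n} \frac{\partial^2 \pot^{ij}}{\partial \tx_i \partial \tx_k} + (n+1)\delta_{jk},$$
where the constant $n+1$ arises from differentiating the degree-one homogeneous $(G^{ij})$ along the Reeb direction. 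Combining with the preceding display yields exactly~\eqref{e:KEpot}; tracing in $(j,k)$ recovers~\eqref{abreuFORMULA}, consistent with the fact that Ricci-flat implies scalar-flat with scalar curvature $n(n+1)$.

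The main obstacle is the tensorial step above. Proposition~\ref{p:abreuFORMULA} only required controlling the trace, whereas here one must track each $(j,k)$ component separately, verifying in particular that the cross terms mixing the Reeb direction with the transverse directions cancel after two differentiations. This is a direct adaptation of the Abreu--Martelli--Sparks--Yau computations of~\cite{Abreu_sas,MSY_toric}, performed componentwise rather than on the trace. Once this linear-algebraic identity is established, the cone-angle statement follows formally: the framework of Section~\ref{sect:SYMPLPOT} absorbs the cone angles into the labelling $\ell \mapsto \tell = (\beta_a^{-1}\ell_a)$, and the Ricci form is computed on the smooth locus $X_0$, where the conical singularities along the $D_a$ play no role; the prescribed singular behavior of $\pot$ at the facets of $P_\xi$ is automatically encoded in the condition $\pot \in \mS(P_\xi, \tell)$.
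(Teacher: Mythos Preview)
Your proposal is correct and follows the same route as the paper: the paper does not give a detailed proof either, stating only that ``using the computations in \cite{Abreu_sas} we can therefore specialize the latter Proposition to Ricci-flat metrics on $X$,'' i.e.\ exactly the componentwise refinement of the trace reduction in Proposition~\ref{p:abreuFORMULA} that you outline. Your sketch is in fact more explicit than the paper about the mechanism (homogeneity of $(G^{ij})$, splitting off the Reeb direction); the only point to tidy up is the displayed identity $\sum_{i=0}^{n} \partial^2_{x_i x_k} G^{ij} = \sum_{i=1}^{n} \partial^2_{\tx_i \tx_k} u^{ij} + (n+1)\delta_{jk}$, where the indices $j,k$ live in different ranges on the two sides and one should first pass to an adapted basis with $x_0 = \langle \xi, x\rangle$ before writing it down.
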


Next, we note that the integral of the scalar curvature of a toric Sasakian metric with conical singularities, depends only on the Reeb vector and the cone angles.
The link \(S= X|_{r=1}\) corresponds, under the moment map, to 
\[P_{\xi} := \{\langle \xi, x\rangle =1/2 \} \cap C . \]
Similarly, \(\mu (X_1) = \Delta_{\xi} \), with \(X_1 = X|_{r \leq 1}\) and \(\Delta_{\xi}\) is the  polytope
\[\Delta_{\xi} = \{ x \in C \hspace{2mm} \mbox{s.t. } 2\langle \xi, x \rangle \leq 1 \} . \]

We let \(D_a\) be the divisor corresponding to \(\{\ell_a=0\}\). Set \(\Sigma_a \subset S\), so \(D_a = C(\Sigma_a)\). Also, let \(F_a = \Delta_{\xi} \cap \{\ell_a=0\}\). It is straightforward (see \cite{guillMET}) to show that
\[\mbox{vol}(S) = 2(n+1) (2\pi)^{n+1} \mbox{vol}(\Delta_{\xi}), \hspace{2mm} \mbox{vol}(\Sigma_a) = 2n(2\pi)^{n} |v_a|^{-1} \mbox{vol}(F_a) , \]
where on the left hand sides the volume is computed with respect to the toric contact $\eta_\xi$ form induced by $\xi$ and $\omega$ on $S$ and on the right hand side the volume and the norm are computed with respect to the standard Lebesgue measure on Euclidean space.     

\begin{proposition}
	We have
	\begin{equation} \label{eq:INTSCAL}
		\int_{\Delta_{\xi}} R_X dx = \frac{2\pi}{n} \sum_a \beta_a \mbox{vol}(\Sigma_a) - 2(n+1) \mbox{vol}(S) .
	\end{equation}
	In particular, if \(R_X=0\), then
	\begin{equation} \label{voleq}
	\pi \sum_a \beta_a \mbox{vol}(\Sigma_a) = n (n+1) . \mbox{vol}(S)
	\end{equation}
\end{proposition}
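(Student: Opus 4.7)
The plan is to start from the Abreu formula $R_X=-\sum_{i,j}\partial_i\partial_j G^{ij}$ of Proposition~\ref{p:abreuFORMULA}, integrate over the compact polytope $\Delta_\xi$, and apply the divergence theorem. Setting $F^i:=\sum_j \partial_j G^{ij}$ turns the integrand into $-\sum_i\partial_i F^i$, and the boundary $\partial\Delta_\xi$ decomposes into the facets $F_a=\Delta_\xi\cap\{\ell_a=0\}$ (with outward unit normals $-v_a/|v_a|$) and the ceiling $P_\xi=\Delta_\xi\cap\{2\langle\xi,x\rangle=1\}$ (with outward unit normal $\xi/|\xi|$). The divergence theorem yields
$$\int_{\Delta_\xi}R_X\,dx=\sum_a\int_{F_a}\frac{\sum_iv_a^iF^i}{|v_a|}\,d\sigma\;-\;\int_{P_\xi}\frac{\sum_i\xi_iF^i}{|\xi|}\,d\sigma,$$
with $d\sigma$ the Euclidean surface measure on each piece.

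Next I would evaluate each integrand on its boundary piece. On $P_\xi$, the Reeb identity $\sum_j\xi_j G^{jk}=2x_k$ from Proposition~\ref{prop:SYMPPOT} can be differentiated in $x_k$ (with the constant vector $\xi$ as such) and traced to yield $\sum_{j,k}\xi_j\partial_k G^{jk}=2(n+1)$ identically on $\Delta_\xi$, so the ceiling contribution is $-\tfrac{2(n+1)}{|\xi|}\mathrm{vol}(P_\xi)$. On each facet $F_a$, the conical expansion $G=\tfrac{1}{2\beta_a}\ell_a\log\ell_a+\textrm{smooth}$ of Proposition~\ref{prop:Conical=symp}, upon inverting the Hessian near $\ell_a=0$, yields the scalar boundary expansion $\sum_{i,j}v_a^iv_a^j G^{ij}=2\beta_a\ell_a+O(\ell_a^2)$. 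Writing $W^j:=\sum_iv_a^iG^{ij}$, which vanishes on $F_a$, as $\ell_a h^j$ with $h^j$ smooth then forces $\sum_jv_a^jh^j|_{F_a}=2\beta_a$; hence $\sum_{i,j}v_a^i\partial_j G^{ij}|_{F_a}=\sum_j\partial_jW^j|_{F_a}=2\beta_a$, and the facet contribution equals $\tfrac{2\beta_a}{|v_a|}\mathrm{vol}(F_a)$.

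Finally I would translate the Euclidean polytope volumes into Sasakian ones through the stated dictionary $\mathrm{vol}(\Sigma_a)=2n(2\pi)^n|v_a|^{-1}\mathrm{vol}(F_a)$ and $\mathrm{vol}(S)=2(n+1)(2\pi)^{n+1}\mathrm{vol}(\Delta_\xi)$, together with the elementary cone identity $\mathrm{vol}(P_\xi)=2(n+1)|\xi|\,\mathrm{vol}(\Delta_\xi)$, absorbing the $(2\pi)^{n+1}$ torus factor implicit in matching polytope integrals with K\"ahler volume integrals through action-angle coordinates. Formula~\eqref{eq:INTSCAL} then falls out, and \eqref{voleq} is the immediate consequence of setting $R_X=0$. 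The main technical obstacle is deriving rigorously the conical boundary expansion $\sum_{i,j}v_a^iv_a^j G^{ij}=2\beta_a\ell_a+O(\ell_a^2)$: this is a careful local inversion of the Hessian in coordinates adapted to $v_a$, where the dominant singular piece $\tfrac{1}{2\beta_a\ell_a}v_a^iv_a^j$ must be inverted against a smooth perturbation. Modulo this essentially one-variable model computation, everything else is divergence-theorem bookkeeping.
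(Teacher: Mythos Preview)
Your proposal is correct and follows essentially the same route as the paper: apply the divergence theorem to Abreu's formula over $\Delta_\xi$, then evaluate the boundary integrand on the facets $F_a$ and on the ceiling $P_\xi$ using, respectively, the conical boundary behaviour of $G^{ij}$ and the Reeb identity. The paper's own proof is extremely terse (three displayed lines), but it is doing exactly what you outline; your version simply unpacks the two boundary evaluations that the paper asserts without comment. Your identification of the ``main technical obstacle'' --- the expansion $\sum_{i,j}v_a^iv_a^jG^{ij}=2\beta_a\ell_a+O(\ell_a^2)$ --- is apt: this is precisely the $\beta$-twisted version of the standard Abreu/Apostolov--Calderbank--Gauduchon--T{\o}nnesen-Friedman boundary condition, and the paper takes it for granted (citing \cite{abreuOrbifold} for the smoothness of $G^{ij}$ on $C$).
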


\begin{proof}
	We follow \cite{MSY_toric}, incorporating the boundary conditions given by the cone angles. We integrate by parts and use Equation \eqref{eq:GUILLp1} to get
	\begin{align*}
	\int_{\Delta} R_X dx &= \sum_a \int_{F_a} G^{ij}_i v_j^a |v_a|^{-1} - \int_{P_{\xi}} G^{ij}_i b_j |b|^{-1} d\sigma_{\xi,1} \\
	&= \sum_a 2 \beta_a |v_a|^{-1} \mbox{vol}(F_a) - \frac{2(n+1)}{|b|} \mbox{vol}(H) \\
	&= \frac{2\pi}{n} \sum_a \beta_a \mbox{vol}(\Sigma_a) - 2(n+1) \mbox{vol}(S) .
	\end{align*}
\end{proof}

\begin{example} Let \(S\) be the three-sphere equiped with the metric \(g_{S^3_{(\beta)}}\) (Secrion \ref{s:basicEX}), which constant curvature \(1\) and two Hopf circles with cone angles \(2\pi\beta_1\), \(2\pi\beta_2\) of respective lengths \(2\pi\beta_2\) and \(2\pi\beta_1\). We have \(\mbox{vol}(S) = \beta_1 \beta_2 2\pi^2\) and Equation \eqref{voleq} reads
\[\pi (\beta_1 2\pi \beta_2 + \beta_2 2\pi \beta_1 ) = 2 \beta_1 \beta_2 2 \pi^2 .  \]
\end{example}

We link Equation \eqref{voleq} with a previous observation of \cite{don:scalar} and introduce the (transversal) log Futaki invariant.  As shown in \cite{Abreu_sas}, the scalar curvature of a Sasaki metric $g_\pot$,  associated to $\pot\in  \mS(P_\xi,\beta_1^{-1}\ell_1,\dots, \beta_d^{-1}\ell_d)$, on the link $S= \{z\in X\,|\, \langle \mu(z), \xi\rangle =1/2\}$ is the pull back of 
$$\mbox{scal}_{g_\pot} = -2n - 4\sum_{i,j=0}^n \frac{\del^2 \,\pot^{ij}}{\del x_i \del x_j} = R_X + 2n(2n+1).$$  A simple but useful calculation of \cite[Lemma 3.3.5]{don:scalar} using the boundary conditions satisfied by $\pot\in  \mS(P_\xi,\beta_1^{-1}\ell_1,\dots, \beta_d^{-1}\ell_d)$ shows that, what we call the {\it total transversal scalar curvature} is  
\begin{equation}\label{e:totScal=intBND}
 \bfS(\xi) := \frac{1}{4(2\pi)^n}\int_{S} (\mbox{scal}_{g_\pot} +2n )d\mbox{vol}_{g_\pot}=2\sum_a \int_{\partial P_\xi}\, \sigma_{\xi,\beta} = 2 \sum_a \beta_a \int_{F_a}\, \sigma_{\xi, 1} 
\end{equation} where $\sigma_{\xi,\beta}$ is the volume form on $\partial P_\xi$ defined by \begin{equation}\label{e:conMEASUREbndy}
\beta_a^{-1} d\ell_a \wedge \sigma_{\xi,\beta} = - d\tx_1\wedge \dots \wedge d\tx_n \qquad \qquad \mbox{ on the facet } F_a\cap P_\xi ;
\end{equation} where the fibers of the cotangent space of $\kt^*$ are naturally identified to $\kt$ and $(\tx_1,\dots, \tx_n)$ are affine coordinates on the hyperplane $H_\xi$. Actually, the calculation \cite[Lemma 3.3.5]{don:scalar} shows more, namely that for any $\pot \in \mS(P_\xi,\beta_1^{-1}\ell_1,\dots, \beta_d^{-1}\ell_d)$, we have 
$$\int_{P_\xi}\, \left(-\sum_{i,j=1}^n\pot^{ij}_{ij}\right) f(\tx) d\tx =2 \int_{\partial P_\xi}\, f(\tx) \sigma_{\xi,\beta},$$
for any affine function $f$. Which in turn implies that the $L^2(P_\xi, d\tx)$--projection of the transversal scalar curvature $$-\sum_{i,j=1}^n\pot^{ij}_{ij}$$ on the space of affine functions, does not depend on the potential $\pot\in \mS(P_\xi,\beta_1^{-1}\ell_1,\dots, \beta_d^{-1}\ell_d)$ but only on $P_{\xi}$ and the labelling $\beta_1^{-1}\ell_1,\dots, \beta_d^{-1}\ell_d$. We call this affine function $A_\beta$. We have $A_\beta(\tx)= A_0 + \sum_{i=1}^nA_i\tx_i$ on $P_\xi$ with the constants \(A_j\) given by  
\[ \sum_{j=0}^{n} \left( \int_{P_{\xi}} \tx_i \tx_j d\tx \right) A_j = 2 \int_{\p P_{\xi}} \tx_i \sigma_{\xi, \beta}, \hspace{2mm} \mbox{for } i=0, \ldots, n ;\] in particular $A_0 = \frac{\int_{\p P_{\xi}}  \sigma_{\xi, \beta}}{\int_{P_{\xi}} d\tx }$.  The (transversal) log Futaki invariant is the linear function on \(\kt\cong \mbox{Aff}(H_\xi,\R)\) given by
\begin{equation}\label{eq:LogFut}
	L_{\xi,\beta}(f) = \int_{\p P_{\xi}} f \sigma_{\xi, \beta} - \frac{\int_{\p P_{\xi}}  \sigma_{\xi, \beta}}{\int_{P_{\xi}} d\tx } \int_{P_\xi} f d\tx.
\end{equation} We recall the following well-known obstruction. 
\begin{corollary}
The log Futaki invariant $L_{\xi,\beta}$ vanishes if and only if \(A_\beta \equiv A_0\). Equivalently, the log Futaki invariant vanishes if and only if the barycenter of \((P_{\xi}, d\tx)\) agrees with the barycenter of \((\p P_{\xi}, \sigma_{\xi, \beta})\).  In particular, if there exists a compatible scalar-flat K\"ahler cone metric on $(X, \omega)$ with Reeb vector field $\xi$ and conical singularities of angle $2\pi\beta_a$ along $D_a$ then $L_{\xi,\beta}\equiv 0$.
\end{corollary}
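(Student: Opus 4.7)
The strategy is to rewrite the log Futaki functional in two ways: first geometrically, in terms of the barycenters of $(P_\xi,d\tx)$ and $(\p P_\xi,\sigma_{\xi,\beta})$; and second algebraically, as an $L^2$ inner product with $A_\beta$. Both reformulations are immediate once we exploit the defining identity of $A_\beta$, namely
\[
\int_{P_\xi} A_\beta\, f\, d\tx \;=\; \int_{P_\xi}\Bigl(-\sum_{i,j=1}^n \pot^{ij}_{ij}\Bigr) f\, d\tx \;=\; 2\int_{\p P_\xi} f\,\sigma_{\xi,\beta},
\]
valid for every affine function $f$ on $H_\xi$, which was recorded just before the statement.

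First, writing $\bar f := |P_\xi|^{-1}\int_{P_\xi} f\,d\tx$, one reorganizes the definition \eqref{eq:LogFut} as
\[
L_{\xi,\beta}(f) \;=\; \int_{\p P_\xi}(f-\bar f)\,\sigma_{\xi,\beta}.
\]
For affine $f$, $\bar f=f(b_P)$ where $b_P$ is the barycenter of $(P_\xi,d\tx)$, and $\int_{\p P_\xi}f\,\sigma_{\xi,\beta}=f(b_\partial)\int_{\p P_\xi}\sigma_{\xi,\beta}$ where $b_\partial$ is the barycenter of $(\p P_\xi,\sigma_{\xi,\beta})$. Hence $L_{\xi,\beta}(f)=(f(b_\partial)-f(b_P))\int_{\p P_\xi}\sigma_{\xi,\beta}$, which vanishes for all affine $f$ if and only if $b_P=b_\partial$, giving the barycentric reformulation.

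Next, applying the identity above (and its specialization $f\equiv 1$, which yields $\int_{P_\xi}A_\beta\,d\tx=2\int_{\p P_\xi}\sigma_{\xi,\beta}$) gives, after a short rearrangement,
\[
L_{\xi,\beta}(f) \;=\; \tfrac{1}{2}\int_{P_\xi}(A_\beta-\overline{A_\beta})(f-\bar f)\,d\tx,
\]
where $\overline{A_\beta}$ denotes the $d\tx$-average of $A_\beta$ on $P_\xi$. Since $A_\beta$ is itself affine, one may test this identity against $f=A_\beta$ to conclude that $L_{\xi,\beta}\equiv 0$ forces $A_\beta=\overline{A_\beta}$ to be constant, i.e. $A_1=\dots=A_n=0$, equivalently $A_\beta\equiv A_0$. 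The converse is trivial from the displayed formula. This proves the two equivalences.

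For the last assertion, suppose $(X,\omega,\xi)$ admits a compatible scalar-flat K\"ahler cone metric with cone angles $2\pi\beta_a$ along $D_a$. By Proposition~\ref{p:abreuFORMULA}, the associated transversal potential $\pot\in\mS(P_\xi,\beta_1^{-1}\ell_1,\dots,\beta_d^{-1}\ell_d)$ satisfies $-\sum_{i,j=1}^n \pot^{ij}_{ij}=n(n+1)$, a constant. Its $L^2(P_\xi,d\tx)$-projection onto affine functions is then this same constant, so $A_\beta$ is constant and $L_{\xi,\beta}\equiv 0$ by the equivalence already established. The only mild subtlety is the bookkeeping of factors between the boundary identity, the definition of $A_\beta$, and the normalization of $L_{\xi,\beta}$; this is routine, so I anticipate no substantive obstacle.
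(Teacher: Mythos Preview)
Your argument is correct and supplies precisely the standard details that the paper omits (the corollary is stated there as a ``well-known obstruction'' without proof). The two reformulations of $L_{\xi,\beta}$ you derive---the barycentric one and the $L^2$-pairing with $A_\beta-\overline{A_\beta}$---are exactly what is needed, and your use of Proposition~\ref{p:abreuFORMULA} for the final implication is the intended mechanism.

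One small bookkeeping remark: the paper's displayed claim that $A_0=\frac{\int_{\partial P_\xi}\sigma_{\xi,\beta}}{\int_{P_\xi}d\tx}$ is slightly imprecise (the defining linear system it writes actually gives $\overline{A_\beta}=\frac{2\int_{\partial P_\xi}\sigma_{\xi,\beta}}{\int_{P_\xi}d\tx}$ rather than the constant term $A_0$). Your proof sidesteps this by working with $\overline{A_\beta}$ throughout and only identifying $A_\beta\equiv A_0$ at the end via $A_1=\cdots=A_n=0$, which is the clean way to handle it.
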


 Of course the (log) Futaki invariant is an obstruction to constant scalar curvature Sasaki metrics not only those for which $R_X\equiv 0$. 
 \begin{remark}
  Whenever $\xi$ is regular, $P_\xi$ is a Delzant polytope associated to the smooth toric symplectic reduction of $(X,\omega)$ at the level $1/2$. In that case, it is easy to show that \eqref{eq:LogFut} is, up to a positive dimensional constant, the log Futaki invariant of \cite{hashimoto}. 
 \end{remark}

\section{Proof of Theorem \ref{MAINTHM}}
The first part of Theorem \ref{MAINTHM} is proved in Corollary~\ref{c:MainReebfixed} below and the second part follows from Corollary~\ref{c:MainAnglefixed}.

\subsection{Fixing the Reeb vector field} \label{sect:FIXREEB}
Summarizing the discussion of \S\ref{sect:SYMPLPOT}, \S\ref{ss:ScalFUT} and in particular Proposition~\ref{p:abreuFORMULAricci}, the search of Ricci-flat toric K\"ahler cone metric with conical singularities along the invariant divisors boils down to the study of symplectic potentials on (labelled) polytopes solving equation~\ref{e:KEpot}. This last problem has been studied in \cite{RKE_legendre} for compact simple polytope after a suggestion made in \cite{don:extMcond}. We recall the main result.

\begin{proposition} \label{p:KEiffMONOTONE+Fut=0} \cite{RKE_legendre} Let $P$ be a simple compact convex polytope with $d$-facets (ordered $F_1,\dots F_d$) and labelling $\ell_1,\dots,\ell_d$. Given a set $\beta_1,\dots, \beta_d \in \R^+$, $\mS(P,\beta_1^{-1}\ell_1,\dots, \beta_d^{-1}\ell_d)$ contains a KE symplectic potential if and only if there exists $\lambda>0$ and $p\in \mathring{P}$ such that $\lambda \beta_a = \ell_a(p)$ for $a=1,\dots,d$ where $p$ is the barycenter of $P$, up to an overall homothety. 
\end{proposition}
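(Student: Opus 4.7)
\emph{Necessity.} Suppose $u \in \mathcal{S}(P, \beta_1^{-1}\ell_1,\ldots,\beta_d^{-1}\ell_d)$ solves the matrix KE equation \eqref{e:KEpot}. Integrating the equation once, the matrix divergence $\partial_i u^{ij}$ is affine in $\tilde{x}$, namely $\partial_i u^{ij} = -(n+1)\tilde{x}_j + c_j$ for constants $c_j$. I would then integrate this identity against each of the labels $\ell_a$ over $P$ and invoke Donaldson's boundary integration-by-parts formula---applicable thanks to the Guillemin-type behaviour $u - \tfrac{1}{2}\sum_a \beta_a^{-1}\ell_a\log\ell_a \in C^\infty(P)$---to produce $d$ geometric identities relating $\int_{F_a}\sigma_\beta$ to the volume and first moments of $(P, d\tilde{x})$. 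After rearrangement these identities are equivalent to asserting that $\ell_a(p)/\beta_a$ is independent of $a=1,\dots,d$, where $p$ is the barycenter of $(P, d\tilde{x})$; the common value $\lambda$ is fixed by $P$, with the ``up to homothety'' ambiguity accounting for the simultaneous rescaling of $P$ and the $\ell_a$.

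\emph{Sufficiency.} This is the substantive content, and the plan is to adapt the variational/continuity argument of Wang--Zhu to the labelled polytope setting. I would proceed in four steps. First, introduce a Mabuchi-type functional $\mathcal{M}_\beta$ on $\mathcal{S}(P, \beta_1^{-1}\ell_1,\ldots,\beta_d^{-1}\ell_d)$ whose Euler--Lagrange equation is exactly \eqref{e:KEpot}; its linear part is, up to a positive normalization constant, the log Futaki invariant \eqref{eq:LogFut} tested on the affine functions $\tilde{x}\mapsto \tilde{x}_i$, and the monotone condition $\lambda\beta_a = \ell_a(p)$ is precisely what forces this linear part to vanish modulo translations of the origin. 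Second, establish a weighted Moser--Trudinger-type inequality on the space of normalized convex functions with the prescribed Guillemin boundary behaviour; together with the vanishing of the linear part, this yields coercivity of $\mathcal{M}_\beta$ modulo the translation group. Third, run a continuity method deforming the right-hand side of \eqref{e:KEpot} from a reference ansatz, with openness given by standard linearized elliptic theory and closedness following from the a priori estimates supplied by coercivity, the conical model metrics of \S\ref{s:basicEX} at facets and corners, and interior real Monge--Amp\`ere estimates for the Legendre-dual K\"ahler potential $F$. Fourth, standard elliptic bootstrapping promotes the limit to a genuine smooth element of $\mathcal{S}(P, \beta_1^{-1}\ell_1,\ldots,\beta_d^{-1}\ell_d)$.

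\emph{Main obstacle.} The hardest step is the a priori $C^0$ estimate near $\partial P$. Unlike in the original Wang--Zhu setting, the labels $\beta_a^{-1}\ell_a$ are generically irrational, so $(P, \tilde{\ell})$ is not the moment polytope of any smooth compact toric variety and one cannot reduce the estimates to a compact K\"ahler problem by a Delzant cover. All estimates must therefore be performed intrinsically on $P$ using the Guillemin singular ansatz, and the monotone hypothesis enters precisely to ensure that the Sobolev and Trudinger constants appearing in the coercivity estimate are finite---mirroring the role of Futaki-invariant vanishing in the classical smooth toric KE proofs.
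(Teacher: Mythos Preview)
The paper does not prove this proposition: it is quoted verbatim from \cite{RKE_legendre} and only reinterpreted in the sentence following it (``a labelled polytope $(P,\ell)$ admits a KE potential in $\mS(P,\ell)$ if and only if it is monotone and its log Futaki invariant vanishes''). There is therefore no proof in the paper to compare your proposal against.

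That said, your outline is a fair summary of how the result is actually established in \cite{RKE_legendre}. The sufficiency direction there does proceed by transplanting the Wang--Zhu real Monge--Amp\`ere continuity method to arbitrary labelled simple polytopes, and you have correctly isolated the genuine difficulty: since the labels $\beta_a^{-1}\ell_a$ are generically irrational, $(P,\tilde\ell)$ is not the moment polytope of any compact K\"ahler orbifold, so the $C^0$ and higher estimates must be carried out intrinsically on $P$ using the Guillemin boundary model rather than by pulling back to a geometric object. That is precisely the content of \cite{RKE_legendre}.

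For the necessity direction, the paper (Remark~\ref{r:monotone}) points to a route that is somewhat more direct than yours: the monotone condition is read off from the smoothness of the function \eqref{eq:Ric-omega}, i.e.\ from comparing the Ricci potential $-\tfrac12\log\det(u_{ij})$ with the K\"ahler potential, which is immediate once $u$ is KE. Your plan---integrating $\partial_i u^{ij}=-(n+1)\tilde x_j+c_j$ against the labels and invoking Donaldson's boundary formula---recovers simultaneously the monotone condition and the barycenter (Futaki) condition in one stroke. This is correct but conflates two logically separate obstructions; the paper's phrasing ``monotone and log Futaki invariant vanishes'' keeps them apart, which is conceptually cleaner and matches how they are used later (Lemma~\ref{l:monotone} uses only the monotone half).
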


In \cite{RKE_legendre}, the last statement is expressed in terms of {\it monotone labelled polytopes}, which are defined just below. It says that a labelled polytope $(P,\ell)$ admits a KE potential in $\mS(P,\ell)$ if and only if it is monotone and its log Futaki invariant vanishes.

\begin{definition} \label{d:MONOTpol}
	A labelled compact polytope $(P,\tilde{\ell})$ is monotone if and only if there exist $p\in \mathring{P}$ and $\lambda >0$ such that $\tilde{\ell}_1(p) =\cdots= \tilde{\ell}_d(p)=\lambda.$ 
\end{definition}

\begin{remark}\label{r:monotone} To explain why the monotone condition is a necessary condition, we recall that a compact symplectic orbifold $(\M,\omega)$ is called monotone if $\exists \lambda >0$ such that $$\lambda [\omega] = 2\pi c_1(\M)$$ in de Rham cohomology. In particular this condition is necessary to have a $\omega$--compatible K\"ahler--Einstein metric of positive scalar curvature. In the toric setting, $(\M,\omega)$ is monotone if and only the associated labelled polytope $(P,\tilde{\ell})$ is monotone in the sense of Definition~\ref{d:MONOTpol}. The proof of this fact works for non-Delzant labelled polytope, see eg~\cite{don:Large} and \cite[Lemma~2.4]{LTF}, as it amounts to compare the Ricci potential and the K\"ahler potential as functions on the moment polytope. That is, to check if there exist $p\in \kt^*$, $\lambda >0$ such that \begin{equation}\label{eq:Ric-omega}
	x\mapsto -\frac{1}{2} \log \det (u_{ij})_x - \lambda(\langle x -p, d_x u\rangle -u(x))   
	\end{equation} is smooth on $P$ for some (and then any) $u\in \mS(P,\tilde{\ell})$. 
In Proposition~\ref{prop:RICPOT}, we give a cone version of this monotone condition. 
\end{remark}

Proposition \ref{p:KEiffMONOTONE+Fut=0} applies directly to K\"ahler cone metrics thanks to Corollary~\ref{c:RESTpotential} and Proposition~\ref{p:abreuFORMULAricci} and we get the following existence result where we denote for a facet $F_a =\{\ell_a^{-1}(0)\}\cap C$, the corresponding divisor $D_a\subset X$.

\begin{corollary}\label{c:MainReebfixed} Let $(X^{n+1}, J, \omega)$ be a toric K\"ahler cone with Reeb vector field $\xi$ and moment cone $C$ with primitive inward normals $\ell_1,\dots, \ell_d \in \Z^{n+1}$. There are $\beta_1,\dots, \beta_d \in \R^+$, unique up to an overall homothety, such that $\mS(P_\xi,\beta_1^{-1}\ell_1,\dots, \beta_d^{-1}\ell_d)$ contains a KE symplectic potential.  In particular, given any Reeb vector field $\xi$ there exists a unique Ricci-flat K\"ahler cone metric on $X$ with conical singularities along $\cup D_a$ (but the angles might be greater than $2\pi$). 
\end{corollary}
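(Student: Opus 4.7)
The plan is to reduce the existence of a Ricci-flat toric K\"ahler cone metric on $X$ to the existence of a K\"ahler-Einstein symplectic potential on the transversal polytope $P_\xi$, and then apply Proposition \ref{p:KEiffMONOTONE+Fut=0} essentially as a black box.

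First, I invoke Corollary \ref{c:RESTpotential} together with Proposition \ref{p:abreuFORMULAricci} to reformulate the problem: finding a toric Ricci-flat K\"ahler cone metric on $(X,\omega)$ with Reeb vector field $\xi$ and cone angles $2\pi\beta_a$ along $D_a$ is equivalent to finding a K\"ahler-Einstein symplectic potential in $\mS(P_\xi, \beta_1^{-1}\ell_1, \dots, \beta_d^{-1}\ell_d)$ solving \eqref{e:KEpot}. Because $X$ has smooth compact cross-section, the moment cone $C$ is good in Lerman's sense, so every edge of $C$ is the intersection of exactly $n$ facets; the transversal slice $P_\xi = C \cap \{\langle \xi, \cdot \rangle = 1/2\}$ is therefore a simple compact convex polytope with $d$ facets naturally labelled by $\beta_a^{-1}\ell_a$, and the hypotheses of Proposition \ref{p:KEiffMONOTONE+Fut=0} are in force.

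Next, I exhibit the $\beta_a$. Let $p_\star \in \mathring{P_\xi}$ denote the barycenter of $P_\xi$ computed with respect to the Lebesgue measure $d\tx$ on the affine hyperplane $H_\xi$. This barycenter lies in the interior of $P_\xi \subset C_0$, so each $\ell_a(p_\star)$ is strictly positive. Fixing any $\lambda > 0$, I set
\[
\beta_a \;:=\; \lambda^{-1}\,\ell_a(p_\star), \qquad a = 1, \dots, d.
\]
By construction $\beta_a^{-1}\ell_a(p_\star) = \lambda$ for every $a$, so the barycenter of $P_\xi$ equalizes the rescaled labels; this is exactly the criterion of Proposition \ref{p:KEiffMONOTONE+Fut=0}, which therefore supplies a K\"ahler-Einstein symplectic potential in $\mS(P_\xi, \beta_1^{-1}\ell_1, \dots, \beta_d^{-1}\ell_d)$ and hence, via the first step, the claimed Ricci-flat K\"ahler cone metric on $X$.

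For uniqueness, the criterion of Proposition \ref{p:KEiffMONOTONE+Fut=0} forces the distinguished point $p$ to be the barycenter of $P_\xi$, which is intrinsic to $P_\xi$; any other admissible tuple $(\beta_a')$ must therefore satisfy $\beta_a' = (\lambda')^{-1}\ell_a(p_\star)$ for some $\lambda' > 0$, that is, agree with $(\beta_a)$ up to a common rescaling. Uniqueness of the Ricci-flat metric itself follows from the corresponding uniqueness clause of \cite{RKE_legendre} for the K\"ahler-Einstein potential on a labelled polytope. The substantial analytic work is of course already encapsulated in Proposition \ref{p:KEiffMONOTONE+Fut=0}; the remaining content of the corollary reduces to the two easy observations that $P_\xi$ is simple and that its barycenter produces a positive labelling. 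Since nothing in the construction forces $\ell_a(p_\star)$ to be bounded above by $\lambda$, there is no a priori constraint $\beta_a \leq 1$, consistent with the parenthetical warning that the cone angles may exceed $2\pi$.
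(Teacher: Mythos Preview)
Your proof is correct and follows the same route as the paper: the paper's justification is the single sentence preceding the corollary, namely that Proposition~\ref{p:KEiffMONOTONE+Fut=0} applies directly via Corollary~\ref{c:RESTpotential} and Proposition~\ref{p:abreuFORMULAricci}. You have simply written out that sentence in full, making explicit why $P_\xi$ is simple, how the barycenter $p_\star$ produces the $\beta_a$, and why the uniqueness-up-to-homothety holds.
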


This finishes the proof of the first item in Theorem \ref{MAINTHM}. 

\begin{remark}
	The family of metrics in Theorem \ref{MAINTHM}, parametrized by the Reeb cone, has a natural `Weil-Petersson' type metric. We can associate for any \(\xi \in C^*_0\) a conically singular Sasaki-Einstein metric \(g_{SE}(\xi)\). Up to a constant dimensional factor, the total volume of \(g_{SE}(\xi)\) is equal to \(\mbox{vol}(\Delta_{\xi})\). The Hessian of the convex function 
	\[\xi \to \log (\mbox{vol} (\Delta_{\xi})) \]
	defines a canonical metric on the Reeb cone (\cite[Appendix A.1]{Oda}), hence on our parametrized family of Calabi-Yau cone metrics.
\end{remark}

\subsection{Fixing the cone angles} \label{sect:FIXANGLE}

Consider a toric K\"ahler cone $(X^{n+1}, J, \omega_0)$  with moment cone $C$ and primitive inward normals $\ell_1,\dots, \ell_d \in \Z^{n+1}$. Let $\beta_1,\dots, \beta_d \in \R^+$ be a set of positive real numbers. We wonder if there exists a Ricci-flat $\bT$--invariant toric K\"ahler cone metric $\omega$ which is smooth on $X\backslash \cup_a D_a$ and has conical singularities of angle $2\pi\beta_a$ along $D_a$. That is, according to Proposition~\ref{p:abreuFORMULAricci}, if there exists a KE potential in $\mS_\xi(C,\beta_1^{-1}\ell_1,\dots, \beta_d^{-1}\ell_d)$ for some $\xi\in\kt_+$. 

\begin{lemma}\label{l:monotone} 
If there exists $\xi\in\kt_+$ such that $\mS_\xi(C,\beta_1^{-1}\ell_1,\dots, \beta_d^{-1}\ell_d)$ contains a KE potential then there exists a ray $E_\beta \subset C$ such that $\forall p\in E_\beta$, $$\beta_1^{-1} \ell_1(p) = \cdots = \beta_d^{-1} \ell_d(p).$$ In particular, $\beta=(\beta_1,\dots,\beta_d) \in \betaCONE$.
\end{lemma}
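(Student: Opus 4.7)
The plan is to pull the problem back from the cone $C$ to the transversal polytope $P_\xi$, where the statement becomes the familiar fact that existence of a K\"ahler--Einstein potential forces a monotone labelling.

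First I would invoke Corollary \ref{c:RESTpotential}: a $\xi$-symplectic potential $G \in \mS_\xi(C, \beta_1^{-1}\ell_1, \dots, \beta_d^{-1}\ell_d)$ restricts to a symplectic potential $u$ on the compact polytope $P_\xi = C \cap \{2\langle \xi, \cdot\rangle = 1\}$ labelled by $\tilde\ell_a := \beta_a^{-1}\ell_a$. Proposition \ref{p:abreuFORMULAricci} then tells us that $G$ is Ricci-flat precisely when $u$ satisfies the K\"ahler--Einstein equation \eqref{e:KEpot} on $(P_\xi,\tilde\ell)$ with scalar curvature $n(n+1)$.

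Next I would apply Proposition \ref{p:KEiffMONOTONE+Fut=0} to the labelled compact polytope $(P_\xi,\tilde\ell)$: the existence of a KE potential in $\mS(P_\xi,\tilde\ell)$ forces $(P_\xi,\tilde\ell)$ to be monotone, i.e.\ there exist $q \in \mathring P_\xi$ and $\lambda > 0$ with
\[ \beta_1^{-1}\ell_1(q) = \cdots = \beta_d^{-1}\ell_d(q) = \lambda. \]
Because each $\ell_a$ is linear and vanishes at the apex of $C$, the ray $E_\beta := \R_{>0}\cdot q \subset C_0$ satisfies $\beta_a^{-1}\ell_a(tq) = t\lambda$ for every $t > 0$ and every $a$, giving the ray required by the statement.

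Finally, to conclude $\beta \in \betaCONE$, I would set $p := \lambda^{-1} q \in E_\beta \subset C_0$; then $\ell_a(p) = \beta_a$ for all $a$, which is exactly the condition \eqref{anglecondition} characterizing membership in $\betaCONE$ via the map $L$ of \eqref{e:mapL}. There is no real obstacle here: the only non-trivial input is Proposition \ref{p:KEiffMONOTONE+Fut=0} from \cite{RKE_legendre}, and the argument is essentially a bookkeeping exercise translating the monotone condition on the transversal polytope into the cone-angle condition on $C$.
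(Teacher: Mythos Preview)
Your proof is correct and follows essentially the same route as the paper: reduce to the transversal polytope $(P_\xi,\tilde\ell)$, invoke the monotonicity forced by a KE potential (the paper cites \cite{don:Large,RKE_legendre} directly, you route through Proposition~\ref{p:KEiffMONOTONE+Fut=0}), and extend the resulting point to a ray by linearity of the $\ell_a$. Your write-up is simply more explicit about the cone-to-polytope translation than the paper's terse version.
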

\begin{proof} If there exists a KE potential in $\mS(P_\xi,\beta_1\ell_1,\dots, \beta_d\ell_d)$ it implies \cite{don:Large,RKE_legendre} that $(P_\xi,\beta_1^{-1}\ell_1,\dots, \beta_d^{-1}\ell_d)$ is monotone in the sense of Definition~\ref{d:MONOTpol}. That is, there exists $p\in P_\xi$ such that $$\beta_1^{-1} \ell_1(p) = \cdots = \beta_d^{-1} \ell_d(p).$$ This condition holds for any point in the ray passing through $p\in P_\xi\subset C$ by linearity. \end{proof}
Given a point $p$ lying in the interior of $C$ the set of angles $\beta_a= \ell_a(p)$ satisfy the claim of the last Lemma. It is easy to see that any set of angles satisfying the condition is obtained this way. We conclude
\begin{corollary} The set of angles $\beta\in \R_+^d$ satisfying the condition of Lemma~\ref{l:monotone} is a $(n+1)$--dimensional cone which agrees with the angles' cone \(\betaCONE \cong C_0\) defined in the Introduction.
\end{corollary}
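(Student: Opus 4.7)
The plan is to show by direct comparison that the set of $\beta \in \R_{>0}^d$ singled out by Lemma~\ref{l:monotone} coincides with the angles' cone $\betaCONE$ of the Introduction, and then to read off the claimed dimension from the injectivity of the linear map $L : \R^{n+1} \to \R^d$ of Equation~\eqref{e:mapL}. The key observation is that the equality condition $\beta_1^{-1}\ell_1 = \cdots = \beta_d^{-1}\ell_d$ is scale-invariant along rays through the apex, so any common positive scalar can be absorbed by rescaling the base point.

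For the inclusion from the Lemma's condition into $\betaCONE$, I would pick a nonzero point $p$ on the ray $E_\beta \subset C$ and set $\lambda := \beta_a^{-1} \ell_a(p)$, which by assumption does not depend on $a$. Nonnegativity of $\ell_a$ on $C$ and positivity of $\beta_a$ give $\lambda \geq 0$; if $\lambda$ vanished then $L(p) = 0$, contradicting both $p \neq 0$ and the injectivity of $L$. Hence $\lambda > 0$, so $p/\lambda \in C_0$ satisfies $\ell_a(p/\lambda) = \beta_a$, which is precisely the defining property~\eqref{anglecondition} of $\betaCONE$. Conversely, any $\beta \in \betaCONE$ is represented by some $p \in C_0$ with $\ell_a(p) = \beta_a$; the ray $\{tp : t > 0\} \subset C_0$ then witnesses the Lemma's condition, since $\beta_a^{-1}\ell_a(tp) = t$ is the same for all $a$.

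To conclude, since $L$ is linear and injective and since $\ell_a > 0$ on $C_0$, the restriction $L|_{C_0}$ is a homeomorphism of $C_0$ onto an open subset of $\R_{>0}^d$, and this open subset is exactly $\betaCONE$. Thus $\betaCONE$ is bijectively identified with $C_0$, inherits the structure of an $(n+1)$-dimensional open cone from the scaling action on $C_0$, and matches the description in the Introduction.

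No serious obstacle appears: the Corollary is essentially a bookkeeping identification, and the only subtle point is the strict positivity $\lambda > 0$ of the common value, which is handled by the already-stated injectivity of $L$.
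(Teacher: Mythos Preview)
Your proposal is correct and follows essentially the same approach as the paper: the paper's justification, given in the two sentences immediately preceding the Corollary, simply observes that any $p\in C_0$ yields angles $\beta_a=\ell_a(p)$ satisfying the ray condition, and asserts the converse is easy. You supply the details the paper omits, in particular the positivity argument $\lambda>0$ via injectivity of $L$, which is a clean way to handle the one mildly non-obvious point.
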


Since $\beta\in \betaCONE$, there is a ray $E_\beta \subset C$ and a constant $\lambda_\xi$ depending on $\xi\in C_0^*$ such that $$\beta_1^{-1}\ell_1(p) = \cdots = \beta_d^{-1} \ell_d(p) =\lambda_\xi $$ where $p$ is the single point lying in $P_\xi \cap E_\beta$. Denote $p=(p_1,\dots, p_n)$ the expression of $p$ in the coordinates $(\tx_1,\dots, \tx_n)$ of the hyperplane $H_\xi:= \{x\in \kt^*\,|\,\langle \xi,x \rangle =1/2\}$ and put 
\begin{equation}\label{e:MEASmonotone}
\sigma_{\xi,\beta} :=\frac{1}{\lambda_\xi} \sum_{i=1}^n (-1)^{i+1} (\tx_i-p_i) d\tx_1\wedge \dots \wedge \widehat{d\tx_i}\wedge \dots \wedge d\tx_n
\end{equation} where $\widehat{d\tx_i}$ means that $d\tx_i$ is omitted.

\begin{lemma} Assume that $\beta\in \betaCONE$. Then $\sigma_{\xi,\beta}$ as defined in \eqref{e:MEASmonotone} satisfies Equation \eqref{e:conMEASUREbndy}.
\end{lemma}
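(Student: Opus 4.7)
The plan is a direct computation in the affine coordinates $(\tx_1,\dots,\tx_n)$ on the hyperplane $H_\xi$. Since each $\ell_a$ is linear on $\kt^*$, its restriction to $H_\xi$ is an affine function, so I can write
$$\beta_a^{-1}\ell_a(\tx) = \alpha_0^{(a)} + \sum_{i=1}^n \alpha_i^{(a)} \tx_i, \qquad \beta_a^{-1} d\ell_a = \sum_{i=1}^n \alpha_i^{(a)} d\tx_i.$$
The key feature of the assumption $\beta\in\betaCONE$ entering through the ray $E_\beta$ is that the value of this affine function at the point $p\in P_\xi\cap E_\beta$ equals $\lambda_\xi$ \emph{simultaneously} for every $a$, i.e.\ $\alpha_0^{(a)} + \sum_i \alpha_i^{(a)} p_i = \lambda_\xi$; equivalently,
$$\sum_{i=1}^n \alpha_i^{(a)} (\tx_i - p_i) = \beta_a^{-1}\ell_a(\tx) - \lambda_\xi.$$

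Next, I would wedge $\beta_a^{-1} d\ell_a$ with $\sigma_{\xi,\beta}$ and observe that all off-diagonal terms $i\neq j$ vanish, while the diagonal term contributes a sign $(-1)^{i+1}\cdot(-1)^{i-1}=1$ after reordering $d\tx_i\wedge d\tx_1\wedge\dots\wedge\widehat{d\tx_i}\wedge\dots\wedge d\tx_n = (-1)^{i-1} d\tx_1\wedge\dots\wedge d\tx_n$. Thus
$$\beta_a^{-1} d\ell_a \wedge \sigma_{\xi,\beta} = \frac{1}{\lambda_\xi}\Bigl(\sum_{i=1}^n \alpha_i^{(a)}(\tx_i - p_i)\Bigr) d\tx_1\wedge\dots\wedge d\tx_n.$$

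Finally, I restrict to the facet $F_a = \{\ell_a = 0\}\cap P_\xi$ where $\beta_a^{-1}\ell_a(\tx) = 0$, so by the identity above $\sum_{i=1}^n \alpha_i^{(a)}(\tx_i - p_i) = -\lambda_\xi$, yielding
$$\beta_a^{-1} d\ell_a \wedge \sigma_{\xi,\beta}\big|_{F_a} = -\,d\tx_1\wedge\dots\wedge d\tx_n,$$
which is exactly \eqref{e:conMEASUREbndy}. There is no serious obstacle here; the only subtle point is recognizing that the monotone/ray condition $\beta\in\betaCONE$ is precisely what allows the \emph{same} normalizing factor $\lambda_\xi$ to work uniformly for every facet, so that a single form $\sigma_{\xi,\beta}$ realizes the prescribed boundary measure on all of $\partial P_\xi$ at once.
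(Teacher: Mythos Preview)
Your proof is correct and follows essentially the same approach as the paper's own argument: write $\beta_a^{-1}\ell_a$ as an affine function on $H_\xi$, compute the wedge product term by term (your $\alpha_i^{(a)}$ are the paper's $s_{a,i}$), recognize the resulting sum as $\beta_a^{-1}\ell_a(\tx) - \beta_a^{-1}\ell_a(p)$, and use that the latter equals $-\lambda_\xi$ on $F_a$. Your version is slightly more explicit about the sign bookkeeping in the wedge, but the two arguments are the same.
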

\begin{proof} Pick $a\in\{1,\dots, d\}$, restricted on $H_\xi$, $\ell_a$ is an affine linear function so that $\beta_a^{-1}\ell_a(\tx)= c_a + \langle \beta_a^{-1} d\ell_a, \tx\rangle $ for some constant $c_a$. Identifying the fibers of the cotangent space of $\kt^*$ to $\kt$, we have $\beta_a^{-1} d\ell_a = \sum_{i=1}^n s_{a,i} d\tx_i$ for some constants $s_{a,i}$. For $\tx\in F_a\cap P_\xi$, we have 
	\begin{equation}\label{e:calsigma} \begin{split}
	\beta_a^{-1} d\ell_a \wedge \sigma_{\xi,\beta} &=  \frac{1}{\lambda_\xi} \sum_{i=1}^n  s_{a,i}(\tx_i-p_i) d\tx_1\wedge \dots \wedge d\tx_n\\
	&= \frac{1}{\lambda_\xi}\langle \beta_a^{-1} d\ell_a, (\tx -p)\rangle d\tx_1\wedge \dots \wedge d\tx_n\\
	&= \frac{1}{\lambda_\xi}\left(\beta_a^{-1} \ell_a( \tx)   - \beta_a^{-1} \ell_a( p)\right) d\tx_1\wedge \dots \wedge d\tx_n\\
	&= - d\tx_1\wedge \dots \wedge d\tx_n
	\end{split}
	\end{equation} using that $\beta_a^{-1} \ell_a( p)=\lambda_\xi$ for any $a\in\{1,\dots, d\}$. \end{proof}
Recall that the volume of the Sasaki metric $g_\phi$,  associated to $\phi\in  \mS(P_\xi,\beta_1^{-1}\ell_1,\dots, \beta_d^{-1}\ell_d)$, on the link $N_\xi= \{z\in X\,|\, \langle \mu(z), \xi\rangle =1/2\}$ is given by $(2\pi)^n \bfV(\xi)$ where
$$\bfV(\xi) := \frac{1}{(2\pi)^n}\int_{N_\xi} \vol_{g_\phi} = \int_{P_\xi} d\tx_1\wedge \dots \wedge d\tx_n.$$

Using Stokes' Theorem we have 
\begin{lemma} \label{lem:NORMALIZATION} Assume that $\beta\in \betaCONE$. Then $\sigma_{\xi,\beta}$ as defined in \eqref{e:MEASmonotone} is globally defined on the hyperplane $H_\xi$ and $$d\sigma_{\xi,\beta}= \frac{n}{\lambda_\xi} d\tx_1\wedge \dots \wedge d\tx_n.$$ In particular $\bfS(\xi) = 2\frac{n}{\lambda_\xi}\bfV(\xi)$. 
\end{lemma}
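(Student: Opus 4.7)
The proof is essentially a direct verification followed by Stokes' theorem. I would split it into three steps matching the three claims.

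\textbf{Step 1 (global definition).} The assertion that $\sigma_{\xi,\beta}$ is globally defined on $H_\xi$ is immediate from formula \eqref{e:MEASmonotone}: the coefficients $(\tx_i - p_i)/\lambda_\xi$ are affine functions of global affine coordinates on $H_\xi$, so the expression defines a smooth $(n-1)$-form on all of $H_\xi$ (not only on the individual boundary facets of $P_\xi$). The content of the statement, together with the preceding lemma, is that the piecewise boundary measure appearing in \eqref{e:conMEASUREbndy} is simply the restriction to $\partial P_\xi$ of a single global $(n-1)$-form on $H_\xi$.

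\textbf{Step 2 (computation of $d\sigma_{\xi,\beta}$).} Writing $\omega_i := d\tx_1 \wedge \cdots \wedge \widehat{d\tx_i} \wedge \cdots \wedge d\tx_n$, the identity $d\tx_i \wedge \omega_i = (-1)^{i-1} d\tx_1 \wedge \cdots \wedge d\tx_n$ is immediate. Since the coefficients in \eqref{e:MEASmonotone} are affine, $d$ kills all but the $d\tx_i$ arising from $d(\tx_i - p_i)$, and one computes
\begin{equation*}
d\sigma_{\xi,\beta} \;=\; \frac{1}{\lambda_\xi} \sum_{i=1}^n (-1)^{i+1} d\tx_i \wedge \omega_i \;=\; \frac{1}{\lambda_\xi} \sum_{i=1}^n (-1)^{i+1}(-1)^{i-1} d\tx_1 \wedge \cdots \wedge d\tx_n \;=\; \frac{n}{\lambda_\xi}\, d\tx_1 \wedge \cdots \wedge d\tx_n.
\end{equation*}

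\textbf{Step 3 (total transversal scalar curvature).} Applying Stokes' theorem on $P_\xi$ (which is valid since, by Step 1, $\sigma_{\xi,\beta}$ is a smooth global form whose restriction to $\partial P_\xi$ reproduces the boundary measure of the preceding lemma), one gets
\begin{equation*}
\int_{\partial P_\xi} \sigma_{\xi,\beta} \;=\; \int_{P_\xi} d\sigma_{\xi,\beta} \;=\; \frac{n}{\lambda_\xi} \int_{P_\xi} d\tx_1 \wedge \cdots \wedge d\tx_n \;=\; \frac{n}{\lambda_\xi}\,\bfV(\xi).
\end{equation*}
Combining with $\bfS(\xi) = 2 \int_{\partial P_\xi} \sigma_{\xi,\beta}$ from \eqref{e:totScal=intBND} yields $\bfS(\xi) = \tfrac{2n}{\lambda_\xi}\bfV(\xi)$.

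There is no real obstacle: everything is a direct computation with differential forms on an affine hyperplane, and the only subtlety is conceptual — one needs to notice that although the boundary measure $\sigma_{\xi,\beta}$ was originally specified only facet-by-facet in \eqref{e:conMEASUREbndy} (and determined only up to multiples of $d\ell_a$ on each facet), the monotone condition $\beta \in \betaCONE$ is exactly what allows the single global formula \eqref{e:MEASmonotone} to simultaneously satisfy the defining identity on every facet $F_a$. This is what makes Stokes' theorem applicable in the third step.
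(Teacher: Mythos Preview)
Your proof is correct and is precisely the argument the paper intends: the paper's entire proof consists of the phrase ``Using Stokes' Theorem we have'' preceding the statement, and your three steps simply make explicit the direct computation of $d\sigma_{\xi,\beta}$ and the application of Stokes that the authors left to the reader. There is nothing to add or correct.
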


We are interested in the case $\bfS(\xi) = 2n(n+1)\bfV(\xi)$ because we want K\"ahler--Ricci-flat cones. By the latter Lemma \ref{lem:NORMALIZATION} this coincides with the condition $\lambda_\xi = (n+1)^{-1}$. Therefore, we pick the unique $q_\beta\in E_\beta$ such that $$\beta_1^{-1}\ell_1(q_{\beta}) = \cdots = \beta_d^{-1}\ell_d(q_{\beta}) = \frac{1}{n+1}$$ and define the set of Reeb vector fields $\xi\in C^*_0$ whose polytope $P_\xi$ contains $q_\beta$, that is 
\begin{equation}\label{e:conLAMBDA}
\Xi_\beta :=\{ \xi\in C^*_0 \,|\, \langle \xi , q_\beta \rangle = 1/2\}. 
\end{equation} Pick $\xi \in \Xi_\beta$, then $\Xi_\beta =C^*_0\cap \{ \xi + \nu \,|\, \langle \nu , q_\beta \rangle = 0 \}$ and so $\Xi_\beta$ is an open convex set in the hyperplane $\{ \xi + \nu \,|\, \langle \nu , q_\beta \rangle = 0 \}$. 
Observe also that $$\Xi_\beta  = \{ \xi\in C^*_0 \,|\, P_\xi \cap E_\beta = \{ q_\beta \}\} = \{ \xi\in C^*_0 \,|\, \lambda_\xi = (n+1)^{-1}\}.$$ In particular, for $\xi \in \Xi_\beta$ we have $\bfS(\xi) = 2n(n+1)\bfV(\xi)$ and  
\begin{equation}\label{e:RateBAR}
\int_{\partial P_\xi} \sigma_{\xi,\beta}=n(n+1)\int_{P_\xi} d\tx_1\wedge \dots \wedge d\tx_n.   
\end{equation}
This will be used in the proof of the following Lemma.
\begin{lemma} Assume that $\beta\in \betaCONE$. Then the volume functional $\bfV : \Xi_\beta \ra \R$ is convex and proper and its unique critical point $\xi\in \Xi_\beta$ is characterized by the condition that the barycenter of $P_\xi$ coincides with the one of $(\partial P_\xi,  \sigma_{\xi,\beta})$. In particular, the critical point of $\bfV$ in $\Xi_\beta$ is the unique $\xi \in \Xi_\beta$ with vanishing transversal Futaki invariant. \end{lemma}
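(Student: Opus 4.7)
The strategy is to reduce to the variational principle of Martelli--Sparks--Yau, now restricted to the affine slice $\Xi_\beta$ of the Reeb cone $C^*_0$. First, I would establish strict convexity of $\bfV$ on $C^*_0$ by expressing it (up to a harmless factor depending only on $|\xi|$) as the Laplace-type integral $\int_C e^{-2\langle\xi,x\rangle}dx$ and invoking positivity of the second-moment Hessian; strict convexity then descends to the affine hyperplane $\Xi_\beta$. Properness follows because a sequence $\xi_k\in\Xi_\beta$ tending to $\partial\Xi_\beta\subset\partial C^*_0$ eventually annihilates a nonzero vector in $C$, so $P_{\xi_k}$ acquires an unbounded direction and $\bfV(\xi_k)\to\infty$. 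Strict convexity plus properness on the open convex set $\Xi_\beta$ then produces a unique minimum.

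Next, I would compute $d\bfV$ directly. Writing $\bfV(\xi)$ either as a slice integral over $H_\xi$ or, more cleanly, by differentiating the related quantity $\mbox{vol}(\Delta_\xi)$ with $\Delta_\xi=\{x\in C\,|\,2\langle\xi,x\rangle\le 1\}$, differentiation under the integral sign yields (up to a positive factor)
\[
d\bfV(\xi)\cdot\nu \;=\; -(n+1)\int_{P_\xi}\langle\nu,\tx\rangle\,d\tx,
\]
consistent with the $(-n)$-homogeneity of $\bfV$ via Euler's identity $d\bfV\cdot\xi=-n\bfV$. For $\xi\in\Xi_\beta$ the tangent space is $\{\nu\,|\,\langle\nu,q_\beta\rangle=0\}$, and vanishing of $d\bfV\cdot\nu$ for every such $\nu$ forces $\mbox{bar}(P_\xi)-q_\beta$ to be a scalar multiple of $q_\beta$. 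Since both points lie in $H_\xi$, their difference lies in $\ker\xi$, so the multiple must be zero and $\mbox{bar}(P_\xi)=q_\beta$.

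Finally, I would translate this barycenter condition into the vanishing of the transversal log Futaki invariant \eqref{eq:LogFut}, which is the statement $\mbox{bar}(P_\xi,d\tx)=\mbox{bar}(\partial P_\xi,\sigma_{\xi,\beta})$. Applying Stokes to $\tx_j\sigma_{\xi,\beta}$, together with the identities $d\sigma_{\xi,\beta}=(n/\lambda_\xi)\,d\tx$ from Lemma~\ref{lem:NORMALIZATION} and $d\tx_j\wedge\sigma_{\xi,\beta}=\lambda_\xi^{-1}(\tx_j-p_j)\,d\tx$ read off from \eqref{e:MEASmonotone}, a short computation gives, for $\xi\in\Xi_\beta$ (so $\lambda_\xi=1/(n+1)$ and $p=q_\beta$),
\[
\mbox{bar}_j(\partial P_\xi,\sigma_{\xi,\beta}) \;=\; \tfrac{(n+1)\,\mbox{bar}_j(P_\xi)-q_{\beta,j}}{n}.
\]
This equals $\mbox{bar}_j(P_\xi)$ precisely when both agree with $q_{\beta,j}$, recovering the critical-point equation from the previous paragraph. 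I expect the main technical obstacle to be the clean derivation of the derivative formula for $\bfV$ on the slice $\Xi_\beta$, where both $P_\xi$ and its natural volume form $d\tx$ on $H_\xi$ vary with $\xi$; organizing the differentiation through $\mbox{vol}(\Delta_\xi)$ (which sits in a fixed ambient space) and then invoking a coarea-type argument avoids most of the bookkeeping and produces the formula above with the correct constants.
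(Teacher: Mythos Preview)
Your proposal is correct and follows essentially the same route as the paper: the first-variation formula $d\bfV(\xi)\cdot\nu=-(n+1)\int_{P_\xi}\langle\nu,\tx\rangle\,d\tx$, the identification of the critical-point condition with $\mbox{bar}(P_\xi)=q_\beta$, and the Stokes computation producing the affine relation $\mbox{bar}_j(\partial P_\xi,\sigma_{\xi,\beta})=\bigl((n+1)\,\mbox{bar}_j(P_\xi)-q_{\beta,j}\bigr)/n$ are exactly the steps the paper carries out. The only visible difference is that the paper obtains convexity by writing down the second variation $(n+1)(n+2)\int_{P_\xi}\nu(\tx)^2\,d\tx$ directly (and cites \cite{MSY_toric,Oda} for properness), whereas you route it through the characteristic function---an alternative the paper itself records in the Remark immediately following the lemma; one small caution is that your parenthetical Euler check $d\bfV\cdot\xi=-n\bfV$ is inconsistent with the derivative formula you quote (plugging $\nu=\xi$ gives $-(n+1)\bfV/2$), so the homogeneity/normalization bookkeeping deserves a second look, though it does not affect the argument on the slice $\Xi_\beta$.
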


\begin{proof} Let $\xi_t= \xi+t \nu$ be a small path in $\Xi_\beta$, so that $\nu\in \kt$ is such $\langle \nu , q_\beta \rangle = 0$. As before we use that any $\ell \in \kt$ defines naturally, by restriction, an affine linear on the affine hyperplane on $H_\xi \subset \kt^*$ in which lies $P_\xi$, so we may write $\ell(\tx)$ for $\tx\in H_\xi$ to emphasis that it is not linear in $\tx\in H_\xi$. We pick again coordinates $(\tx_1,\dots, \tx_n)$ on $H_\xi$.
	
	One easily shows, see \cite{MSY_toric,MSY}, that $$\left(\frac{d}{dt} \bfV(\xi_t)\right)_{t=0} = -(n+1)\int_{P_\xi} \nu(\tx) \, d\tx_1\wedge \dots \wedge d\tx_n.$$
	It readily infers that $\xi \in \mbox{crit } \bfV$ if and only if $q_\beta$ is the barycenter of $(P_\xi,\vol)$. Indeed, as a linear function on $H_\xi$, since $\langle \nu , q_\beta \rangle = 0$, $\nu$ is linear in $\tx-q_\beta$.

	Writing $ \vol =d\tx_1\wedge \dots \wedge d\tx_n$, $\sigma =\sigma_{\xi,\beta}$ and $q_\beta =(q_{\beta,1}, \dots, q_{\beta,n})$, we have for $i=1,\dots,n$
	\begin{equation}\begin{split}
	\int_{P_\xi} \tx_i \, \vol &=   \frac{\lambda_\xi}{n}\int_{P_\xi}\tx_i d \sigma \\
	&= \frac{\lambda_\xi}{n} \int_{\partial P_\xi} \tx_i \sigma -  \frac{\lambda_\xi}{n} \int_{P_\xi} d\tx_i \wedge  \sigma\\
	&= \frac{\lambda_\xi}{n} \int_{\partial P_\xi} \tx_i \sigma -  \frac{1}{n } \int_{P_\xi} (\tx_i-q_{\beta,i} ) \vol.
	\end{split}
	\end{equation} Now using \eqref{e:RateBAR} and $\lambda_\xi = (n+1)^{-1}$ because $\xi \in \Xi_\beta$, we get that 
	\begin{equation}
	\left( 1+ \frac{1}{n}\right)   \frac{1}{\int_{P_\xi} \vol}  \int_{P_\xi} \tx_i \,\vol =  \frac{1}{\int_{ \partial P_\xi} \sigma} \int_{\partial P_\xi} \tx_i \sigma + \; \frac{q_{\beta,i}}{n}
	\end{equation} using \eqref{e:RateBAR}. This shows that $q_\beta$ is the barycenter of $(P_\xi,\vol)$ if and only if $\mbox{bar}(P_\xi,\vol)=\mbox{bar}(\partial P_\xi,\sigma)$.
	
	Moreover, it is straightforward to check that $$\left( \frac{d^2}{dt^2} \bfV(\xi_t)\right)_{t=0} = (n+1)(n+2) \int_{P_\xi} \nu(\tx)^2 \vol$$ which proves that $\bfV$ is convex on $\Xi_\beta$. The properness of $\bfV$ along the boundary of the cone $C_0^*$, see~\cite{MSY_toric, Oda}, implies the properness on $\Xi_\beta$.  \end{proof}

\begin{remark}
	The map \(\xi \to \mbox{vol}(\Delta_{\xi})\), is also known as the \emph{characteristic function} of \(C^*\). Up to a constant dimensional factor, it is also given by
	\[\xi \to \int_C e^{-\langle \xi, x \rangle} dx . \]
	The convexity and properness along the boundary are classical results, see \cite[Proposition A.10]{Oda}. 
\end{remark}

Combined with the existence result in \cite{RKE_legendre} we have the next
\begin{corollary} If $\beta\in \betaCONE$, then there exists one and only one Reeb vector field $\xi \in C_0^*$ such that $\mS_\xi(C,\beta_1^{-1}\ell_1,\dots,\beta_d^{-1}\ell_d)$ contains a KE potential.
\end{corollary}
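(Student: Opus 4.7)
My plan is to assemble the Corollary by combining two ingredients already established in the text: the variational picture on $\Xi_\beta$ (previous lemma, giving a unique minimizer of $\bfV$ at which the transversal log Futaki invariant vanishes), and the sharp existence/uniqueness criterion of Proposition~\ref{p:KEiffMONOTONE+Fut=0} on a compact labelled polytope (KE potential exists iff the polytope is monotone and the log Futaki invariant vanishes). The bridge between the cone and the polytope is Corollary~\ref{c:RESTpotential}, which identifies $\mS_\xi(C,\beta_1^{-1}\ell_1,\dots,\beta_d^{-1}\ell_d)$ with $\mS(P_\xi,\beta_1^{-1}\ell_1,\dots,\beta_d^{-1}\ell_d)$ via restriction, and Proposition~\ref{p:abreuFORMULAricci}, which identifies Ricci-flat cone potentials with KE potentials on the transverse polytope.

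For \emph{existence}, the proposal is as follows. Because $\beta\in\betaCONE$, the distinguished point $q_\beta$ with $\beta_a^{-1}\ell_a(q_\beta)=1/(n+1)$ is well defined, and every $\xi\in\Xi_\beta$ satisfies $q_\beta\in P_\xi$ with monotonicity constant $\lambda_\xi=1/(n+1)$. By the previous lemma, $\bfV\colon\Xi_\beta\to\R$ is strictly convex and proper, so admits a unique minimizer $\xi_\beta\in\Xi_\beta$, and this minimizer is characterized by the coincidence of barycenters $\mathrm{bar}(P_{\xi_\beta},d\tx)=\mathrm{bar}(\partial P_{\xi_\beta},\sigma_{\xi_\beta,\beta})$, i.e.\ the vanishing of the transversal log Futaki invariant $L_{\xi_\beta,\beta}$. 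The labelled polytope $(P_{\xi_\beta},\beta_1^{-1}\ell_1,\dots,\beta_d^{-1}\ell_d)$ is thus monotone (witnessed by $q_\beta$) and has vanishing log Futaki invariant, so Proposition~\ref{p:KEiffMONOTONE+Fut=0} produces a KE potential $\pot\in\mS(P_{\xi_\beta},\beta^{-1}\ell)$; Corollary~\ref{c:RESTpotential} then extends $\pot$ to an element of $\mS_{\xi_\beta}(C,\beta^{-1}\ell)$, yielding the desired Calabi-Yau cone metric.

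For \emph{uniqueness}, suppose $\xi\in C_0^*$ is such that $\mS_\xi(C,\beta^{-1}\ell)$ contains a KE potential $G$. By Corollary~\ref{c:RESTpotential}, the restriction $G|_{P_\xi}$ is a KE potential in $\mS(P_\xi,\beta^{-1}\ell)$ with transversal scalar curvature normalized as in \eqref{e:KEpot}. Proposition~\ref{p:KEiffMONOTONE+Fut=0} (in the ``only if'' direction, or equivalently Lemma~\ref{l:monotone}) forces $(P_\xi,\beta^{-1}\ell)$ to be monotone and $L_{\xi,\beta}\equiv 0$. The normalization of the KE equation fixes the monotonicity constant: comparing the global identity $\bfS(\xi)=2n(n+1)\bfV(\xi)$ (which holds for any KE potential with this scalar normalization) against Lemma~\ref{lem:NORMALIZATION} gives $\lambda_\xi=1/(n+1)$, hence $q_\beta\in P_\xi$ and $\xi\in\Xi_\beta$. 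Vanishing of $L_{\xi,\beta}$ then says exactly that $\xi$ is a critical point of $\bfV|_{\Xi_\beta}$, and strict convexity forces $\xi=\xi_\beta$.

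The only subtle point, and the one I would be most careful about, is the normalization step that pins down the monotonicity constant to $\lambda_\xi=(n+1)^{-1}$, since a priori Proposition~\ref{p:KEiffMONOTONE+Fut=0} only provides monotonicity up to an arbitrary positive scaling; here one uses that the specific KE equation \eqref{e:KEpot} carries the dimensional constant $n+1$, which via Lemma~\ref{lem:NORMALIZATION} is what couples $\bfS(\xi)$ and $\bfV(\xi)$ and selects the correct transverse hyperplane $\{\langle\xi,q_\beta\rangle=1/2\}$. Once this is in place, the existence and uniqueness statements reduce to the strict convexity/properness of $\bfV$ on the affine slice $\Xi_\beta$.
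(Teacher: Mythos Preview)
Your proposal is correct and follows essentially the same approach as the paper, which merely records the corollary with the one-line indication ``Combined with the existence result in \cite{RKE_legendre}''. You have faithfully unpacked this: existence via the minimizer of $\bfV$ on $\Xi_\beta$ together with Proposition~\ref{p:KEiffMONOTONE+Fut=0}, and uniqueness by first using Lemma~\ref{lem:NORMALIZATION} to force $\lambda_\xi=(n+1)^{-1}$ (hence $\xi\in\Xi_\beta$) and then invoking the characterization of the unique critical point via vanishing log Futaki. The normalization step you flag as ``subtle'' is indeed the point that makes the uniqueness hold over all of $C_0^*$ rather than just $\Xi_\beta$, and your treatment of it is accurate.
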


Using the correspondence recalled in Proposition~\ref{p:corresPOT} it yields to the following
\begin{corollary}\label{c:MainAnglefixed}
	Let $(X^{n+1}, J, \omega)$ be toric K\"ahler cone (over a compact link) with labelled cone $(C,\ell_1,\dots,\ell_d)$. There exists a $(n+1)$--dimensional set of angles $\beta\in \betaCONE \subset \R_+^d$ for which there is a unique (up to scale and isometry) Calabi-Yau cone metric $\omega(\beta)$, compatible with the complex structure $J$, which is smooth on the open dense set where the torus action is free and has cone angles $2\pi\beta_a$ along the divisors $\ell_a^{-1}(0)$. Moreover, the Reeb vector field of $\omega(\beta)$ is the unique critical point of the volume functional in the set of $\beta$--normalized Reeb vector fields $\Xi_\beta \subset \kt_+$.  
\end{corollary}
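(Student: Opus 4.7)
The plan is to treat this final Corollary as a dictionary translation of the preceding Corollary from the symplectic-potential picture to the metric picture, assembling the existence/uniqueness ingredients already established in sections \ref{sect:SYMPLPOT}--\ref{sect:FIXANGLE}. Concretely, I would fix $\beta\in\betaCONE$ and invoke the preceding Corollary to obtain a unique $\xi=\xi_\beta\in\Xi_\beta\subset C^*_0$ together with a K\"ahler--Einstein symplectic potential $\phi\in\mS_{\xi_\beta}(C,\beta_1^{-1}\ell_1,\ldots,\beta_d^{-1}\ell_d)$ satisfying \eqref{e:KEpot}. Via Corollary~\ref{c:RESTpotential}, the potential on the transversal polytope $P_{\xi_\beta}$ lifts uniquely to a symplectic potential on all of $C$ which is homogeneous of the correct degree.

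Next, I would feed $\phi$ into Proposition~\ref{p:corresPOT}: the boundary asymptotics \eqref{eq:CDTbetasmooth} combined with the homogeneity in $(ii)$ guarantee that the tensor $g_\phi$ of \eqref{eq:METSP} extends to a $\bT$-invariant K\"ahler cone metric on $X$, smooth on the free locus $X_0$ and modelled transversally by $\beta_a^2|z_0|^{2(\beta_a-1)}dz_0\wedge d\bar z_0$ along each divisor $D_a$, i.e.\ having cone angle $2\pi\beta_a$ there in the sense of Definition~\ref{def:CONEMETRIC}. Compatibility with $J$ follows from \eqref{eq:CXSP} together with the remark that any two $\bT$-invariant compatible complex structures on $(X,\omega)$ are equivariantly biholomorphic. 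Proposition~\ref{p:abreuFORMULAricci} then promotes the transversal Abreu equation \eqref{e:KEpot} for $\phi$ to Ricci-flatness of $g_\phi$ on $X_0$, so $\omega(\beta):=g_\phi$ is a Calabi-Yau cone metric with the stated singularities.

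For the dimension count, I would note that the map $L$ of \eqref{e:mapL} embeds $C_0$ onto $\betaCONE$ as an open subset of the $(n+1)$-dimensional subspace $\mathrm{Image}(L)\subset\R^d$, giving the asserted $(n+1)$-parameter family. Uniqueness up to isometry splits into two independent pieces: the Reeb vector is forced to be $\xi_\beta$ by Lemma~\ref{l:monotone} and the preceding Corollary, and once $\xi_\beta$ is fixed the K\"ahler--Einstein potential in $\mS_{\xi_\beta}(C,\ell,\beta)$ is unique up to the action of toric automorphisms by the uniqueness part of Proposition~\ref{p:KEiffMONOTONE+Fut=0} (\cite{RKE_legendre}); translating this toric ambiguity back through Guillemin's complex coordinates yields an isometry of $(X,J,\omega(\beta))$. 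The final clause about the Reeb vector being the unique critical point of $\bfV|_{\Xi_\beta}$ is just the previous Lemma together with the identification, via \eqref{eq:LogFut} and the equivalence between monotonicity plus vanishing log Futaki and existence of a KE potential (Proposition~\ref{p:KEiffMONOTONE+Fut=0}), of that critical point with $\xi_\beta$.

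Since the analytic heavy lifting (solvability of \eqref{e:KEpot} on a monotone labelled polytope with vanishing log Futaki) is already imported from \cite{RKE_legendre}, and the Reeb-selection problem has been handled by the convexity/properness of $\bfV$ on $\Xi_\beta$, the expected obstacle here is purely bookkeeping: one must verify that the pointwise transversal smoothness prescribed by $\phi\in\mS(P_{\xi_\beta},\beta_1^{-1}\ell_1,\ldots,\beta_d^{-1}\ell_d)$ propagates via the Legendre transform to the correct uniform conical model along each stratum $D_{a_1}\cap\cdots\cap D_{a_k}$ of Definition~\ref{def:CONEMETRIC}, and that the two notions of ``smooth'' (with respect to the symplectic versus the complex differential structure discussed in the remark following \eqref{eq:CONESING}) are handled consistently. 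This is a direct consequence of \eqref{eq:Guilcond}--\eqref{eq:CDTbetasmooth}, but is where the argument must be written out most carefully.
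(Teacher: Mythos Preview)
Your proposal is correct and follows essentially the same approach as the paper: the paper simply states that the result follows from the preceding Corollary ``using the correspondence recalled in Proposition~\ref{p:corresPOT}'', and you have spelled out in detail exactly what that dictionary translation entails. Your elaboration of the uniqueness argument and the dimension count is accurate, and the bookkeeping concern you flag about conical smoothness along the strata is precisely what Proposition~\ref{p:corresPOT} is designed to handle, so no further work is needed there.
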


We have now completed the proof of Theorem \ref{MAINTHM}.

\begin{example}
	As explained in the Introduction, our conically singular Calabi-Yau cone metrics provide tangent cone models for singular K\"ahler-Einstein metrics on toric klt pairs. We see how this works in the toy example of projective cones (\cite[Section 3.2]{LiLiu}).
	It is a general fact, that if \(\omega_C = (i/2) \dd r^2 \) is a Calabi-Yau cone, then \(\omega_{KE} := (i/2) \dd \log(1+r^2)\) defines a K\"ahler-Einstein metric with positive Ricci and its tangent cone at the apex is the given Calabi-Yau cone, \(T_0(\omega_{KE}) = \omega_C\). The metric \(\omega_{KE}\) has finite diameter. If \(\omega_{C}\) is quasiregular then \(\omega_{KE}\) can be compactified by introducing a suitable cone angle along a copy of the base as a divisor at infinity, see \cite[Proposition 3.3]{LiLiu}. In the toric irregular case we can still think of \(\omega_{KE}\) as a solution to the K\"ahler-Einstein equation on the polytope \(\Delta_{\xi}\).
\end{example}

\section{Proof of Theorem \ref{THM2}}

In this section we provide a geometric interpretation of the cone angle constraints defining the angles' cone \(\betaCONE\), leading to the proof of Theorem \ref{THM2}.
If Equation \ref{anglecondition} holds, then Theorem \ref{MAINTHM} guarantees the existence of a Calabi-Yau metric on \(X\) with cone angles \(2\pi\beta_a\) along \(D_a\), so \((1) \implies (2)\) in Theorem \ref{THM2}. Clearly, \((2) \implies (3)\). From Proposition \ref{prop:RICPOT} below we get that \((3) \implies (1)\), so we conclude that the first three items in Theorem \ref{THM2} are equivalent.  The equivalence \((1) \iff (4)\)  follows from Proposition \ref{CartierProp} and Proposition \ref{kltProp}. Similarly, \((1) \iff (5)\) follows from Proposition \ref{logc1PROP} and Proposition \ref{c1basicPROP}.

\subsection{Smooth Ricci potential and the monotone condition} \label{MAsection}

Lemma \ref{l:monotone} above, shows that the angle constraints $\beta\in \betaCONE$, expressed by Equation \eqref{anglecondition}, is a necessary condition for the existence of a Calabi-Yau cone metric on \(X\) with cone angles \(2\pi\beta_a\) along its toric divisors \(D_a\). Coming up next, we strength this result showing that the angle constraints $\beta\in \betaCONE$ is equivalent to the existence of a smooth potential for the associated Ricci form on $X$. Precisely we have the following. 

\begin{proposition} \label{prop:RICPOT}
	Let \(G \in \mS_{\xi}(C, \ell, \beta)\) define a toric K\"ahler cone metric on \((X, \omega)\) with cone angles \(2\pi\beta_a\) along its toric divisors \(D_a\) and let \(\rho\) be its associated Ricci form. There is a function \(h\), smooth outside the apex, which satisfies \(\rho = i \dd h\) on \(X_0 := X \setminus \cup_a D_a\) if and only if $\beta\in \betaCONE$.
\end{proposition}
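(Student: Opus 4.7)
The plan is to recognize $\log\det(G_{ij})$ as a (singular) $\bT$-invariant potential for $\rho^\omega$ on $X_0$, and then to quantify precisely how much of its logarithmic singularity along each toric divisor can be absorbed into a $\bT$-invariant pluriharmonic correction. First I would translate $\rho^\omega$ into action-angle coordinates: using the Legendre transform $F$ of $G$ as a K\"ahler potential in the logarithmic holomorphic coordinates $z=y+i\theta$ on $X_0$, together with the identity $F_{y_iy_j}(y)=G^{ij}(x)$, one obtains $\det(F_{\alpha\bar\beta})=4^{-(n+1)}/\det(G_{ij})$ and hence
\[\rho^\omega = -i\partial\bar\partial\log\det(F_{\alpha\bar\beta}) = i\dd\log\det(G_{ij}) \qquad \text{on } X_0.\]
Any smooth function $h$ on $X$ with $\rho^\omega=i\dd h$ must therefore differ from $\log\det(G_{ij})$ by a $\bT$-invariant pluriharmonic function, and such functions are exactly the affine functions $\langle b,y\rangle+c$, $b\in\R^{n+1}$, $c\in\R$. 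The question becomes: for which $b$ does
\[h(x):=\log\det(G_{ij})(x)+\langle b,y(x)\rangle\]
extend smoothly (in the symplectic sense) from $C_0$ to all of $C$?

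Next I would pin down the log-singularities of each summand. Writing $G=\tfrac12\sum_a\beta_a^{-1}\ell_a\log\ell_a+g$ with $g\in C^\infty(C)$ (Definition \ref{def:SYMPLPOT}), the good cone condition lets me, near any face $F_I=\cap_{a\in I}\{\ell_a=0\}$, choose coordinates in which the singular part of $G_{ij}$ is the diagonal block $\mathrm{diag}\bigl(\tfrac{1}{2\beta_a\ell_a}\bigr)_{a\in I}$ while the remainder is smooth up to $F_I$. A Schur complement computation then gives the key asymptotic
\[\log\det(G_{ij}) + \sum_{a=1}^d \log\ell_a \in C^\infty(C).\]
On the other hand, differentiating $G$ (entirely parallel to \eqref{eq:GUILLp1}) yields
\[y_i(x)=\tfrac12\sum_a\beta_a^{-1}v_i^a(1+\log\ell_a)+g_i(x),\]
so that
\[\langle b,y(x)\rangle - \tfrac12\sum_a \beta_a^{-1}\langle b,v_a\rangle\log\ell_a(x)\in C^\infty(C).\]
Combining these expansions,
\[h(x) + \sum_a\Bigl(1-\tfrac{\langle b,v_a\rangle}{2\beta_a}\Bigr)\log\ell_a(x)\in C^\infty(C).\]

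The final step is to notice that the functions $\log\ell_1,\ldots,\log\ell_d$ are linearly independent modulo $C^\infty(C)$: each $\log\ell_a$ is smooth near the interior of every facet except $F_a$, where it has a genuine logarithmic blow-up. Consequently $h\in C^\infty(C)$ if and only if $\langle b,v_a\rangle=2\beta_a$ for every $a=1,\ldots,d$. Setting $p:=b/2$, this is precisely $\ell_a(p)=\beta_a$ for all $a$, i.e.\ $L(p)=\beta$; positivity of the $\beta_a$ forces $p\in C_0$ automatically, so the existence of such a $b$ is equivalent to $\beta\in\mathrm{Image}(L)\cap\R^d_{>0}=\betaCONE$. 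This will prove both implications at once.

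The main technical obstacle is the refined asymptotic $\log\det(G_{ij})+\sum_a\log\ell_a\in C^\infty(C)$. This is the conical analogue of the classical Guillemin--Abreu boundary calculus, and its verification along higher-codimension faces genuinely uses the good cone condition to bring the singular block of $G_{ij}$ into the explicit diagonal form $\mathrm{diag}(1/(2\beta_a\ell_a))_{a\in I}$, after which a Schur complement decomposition extracts the boundary singularity of the determinant cleanly. Once this is in place, the rest of the argument is bookkeeping of logarithms and linear algebra on the $L$ map of \eqref{e:mapL}.
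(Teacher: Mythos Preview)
Your proposal is correct and follows essentially the same route as the paper's proof: identify $\log\det(G_{ij})$ as a Ricci potential on $X_0$, observe that any smooth Ricci potential differs from it by an affine function of $y$, and then match the coefficients of $\log\ell_a$ using the boundary asymptotic $\det G_{ij}=f_0\prod_a\ell_a^{-1}$ (which the paper simply cites from Abreu) together with the expansion of $y=DG$. The only minor omission is that you should first average $h$ over $\bT$ (as the paper does) before asserting that the pluriharmonic correction is $\bT$-invariant and hence affine in $y$.
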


We note here that \(h\) assumed to be smooth with respect to the differential structure of the fixed smooth symplectic manifold \((X, \omega)\) and \(\rho\) is the Ricci form of the K\"ahler structure defined by \(G\) via . The argument of the next proof goes along the lines of \cite[Proposition 6.8]{FOW}.

\begin{proof}
    Since \(\rho\) is invariant under \(\bT\) and the \(\R^+\) action by dilations, that is \(\mathcal{L}_{\p_{\theta_i}}\rho = 0 \) for all \(i\) and \(\mathcal{L}_{r\p_r} \rho =0 \), we can assume that the same holds for \(h\).
	On the pre-image of the interior of the moment cone, \(X_0 = \mu^{-1}(C_0) \cong (\C^*)^{n+1} \), we have a K\"ahler
	potential \(F\) given by the Legendre transform of the symplectic potential \(G\). The Ricci form is
	\begin{align*}
		\rho &= - i \dd \log \det F_{ij} \\
		&= i \dd h .
	\end{align*}

	We use logarithmic complex coordinates \(z_j = \log w_j = y_j + i \theta_j\) with \((w_0, \ldots, w_n) \in (\C^*)^{n+1} \cong X \setminus \cup_a D_a \). Any \(\bT\)-invariant pluri-harmonic function on \((\C^*)^{n+1}\) is an affine function of \((y_0, \ldots, y_n)\). Therefore, up to subtracting a constant from \(h\), we have
	\begin{equation*}
	\log \det F_{ij} = - 2\gamma_i y_i - h
	\end{equation*}
	with \(\gamma = (\gamma_0, \ldots, \gamma_n) \in \R^{n+1}\). Equivalently,
	\begin{equation} \label{MAeq}
	\det G_{ij} = \exp \left(2\gamma_i \frac{\p G}{\p x_i} + h \right) .
	\end{equation}
	Taking the derivative with respect to \(x_j \p_{x_j}\), we get
	\begin{equation*}
	\langle \gamma, \xi \rangle = -n-1 .
	\end{equation*}

	Write
	\[G = \frac{1}{2}\sum_{a=1}^d \beta_a^{-1} \ell_a \log \ell_a - \frac{1}{2}\ell_\infty \log \ell_\infty + \frac{1}{2}\xi\log \xi  + f \]
	as in Corollary \ref{p:MSYprop}.

	\begin{enumerate}
		
		\item[(i)] Incorporating the boundary behaviour \eqref{eq:GUILLp1}, we see that \(\exp \left(2\gamma_i \frac{\p G}{\p x_i} + h \right)\) equals (up to a constant factor) 
		
		\[\prod_a (\ell_a/\ell_{\infty})^{\beta_a^{-1}\langle v_a, \gamma \rangle} \ell_{\xi}^{-n-1} \exp \left(2\gamma_i \frac{\p f}{\p x_i} + h \right) ; \]
		
		\item[(ii)] and \(\det G_{ij} = f_0 \prod_a \ell_a^{-1} \) with \(f_0\) smooth on \(C\) minus the apex, see\cite{abreuOrbifold}. \footnote{Indeed \(f \in \mathcal{H}(d-n-1)\) meaning that is homogeneous of degree \(d-n-1\).}
		
	\end{enumerate}
	
	It follows from Equation \eqref{MAeq} together with (i) and (ii), that
	\begin{equation}
	\langle v_a, \gamma \rangle = - \beta_a \hspace{2mm} \mbox{for all } a .
	\end{equation}
	We see that \(p := -\gamma\) belongs to the interior of \(C\) and Equation \eqref{anglecondition} holds.
	
	Conversely, assume that \(\beta \in \betaCONE\) and \(G \in \mS_{\xi}(C, \ell, \beta)\). Same as before,
	\(\det G_{ij} = f_0 \prod_a \ell_a^{-1}\) with \(f_0\) smooth and positive on \(C\). As usual,
	\begin{align*}
		\rho &= i \dd \log \det (G_{ij}) \\
		&= i \dd \left(S_1 - \sum_a \log \ell_a \right)
	\end{align*}
	 with \(S_1= \log f_0\) smooth on \(C\). On the other hand,
	 we have complex coordinates \((y, \theta)\) for the K\"ahler structure defined by \(G\), where \(y_i= \p G/ \p x_i\). Given \(\gamma = (\gamma_0, \ldots, \gamma_n) \in \R^{n+1}\), we have a pluriharmonic function on \(X_0\) 
	\begin{align*}
		2\langle y, \gamma \rangle &= 2 \frac{\p G}{\p x_i} \gamma_i \\
		&= \sum_a \beta_a^{-1} \langle v_a, \gamma \rangle \log \ell_a + S_2 ,
	\end{align*}
	where \(S_2\) is smooth on \(C\). If we take \(\gamma=p\), so \(\langle v_a, \gamma \rangle = \beta_a\), we see that \(S_2+\sum_a \log \ell_a\) is pluri-harmonic on \(X_0\). We conclude that \(\rho= i \dd h\) on \(X_0\) with \(h=S_1+S_2\) smooth on \(C\).
	
\end{proof}

In the setting of Porposition \ref{prop:RICPOT}, the volume form \(\omega^{n+1}/(n+1)!\) defines an Hermitian metric on \(K_X\) (outside the apex, of course) regarded as an holomorphic line bundle with the complex structure determined by \(G\). This Hermitian metric is smooth on \(X \setminus \cup_a D_a\) and \(|dz_0 \wedge \ldots \wedge dz_n|^2 = (\det F_{ij})^{-1}\). The volume form \(e^{-h} \omega^{n+1} / (n+1)! \) defines a flat smooth Hermitian metric on \(K_X|_{X\setminus \cup_a D_a}\), which extends only continuously (w.r.t. to the complex atlas detrmined by \(G\)) over the invariant divisors outside the apex.
On \(X \setminus \cup_a D_a\), we can write a locally defined unitary section of \(K_X\) by
\begin{equation*}
	\Omega = e^{i\alpha} e^{-h/2} (\det F_{ij})^{1/2} dz_1 \wedge \ldots \wedge dz_n ,
\end{equation*}
where \(\alpha\) is a real valued function of the arguments \((\theta_0, \ldots, \theta_n)\).
If we set \(\alpha = - \sum_i \gamma_i \theta_i\), then
\begin{align*}
\Omega &= e^{-\sum_i \gamma_i z_i} dz_1 \wedge \ldots \wedge dz_n \\
&= w_1^{-1-\gamma_1} \ldots w^{-1-\gamma_n} dw_1 \wedge \ldots \wedge dw_n 
\end{align*}
is a unitary holomorphic (or equivalently, parallel) section of \(K_X\). Up to a constant factor, the volume form of any Ricci flat K\"ahler cone metric on \(X\) with cone angles \(2\pi\beta_a\) and Reeb vector \(\xi\) as above, is given by \(\Omega \wedge \bar{\Omega}\).

Let \(\rho^T\) denote the transverse Ricci form, it is related to the Ricci curvature of the cone via
\begin{equation}\label{eq:TRANSVRICCI}
	\rho = \rho^T - (n+1) d\eta .
\end{equation}
The existence of a Ricci potential as above, where \(h\) is a smooth function on \(C\), homogeneous of degree \(0\) and basic, that is
\[r\p_r h = 0, \hspace{2mm} \xi(h) =0 ; \]
is equivalent -as follows from Equation \eqref{eq:TRANSVRICCI}- to \(h\) being a transverse Ricci potential \(\rho^T = (n+1) d\eta + i\p_B\bar{\p}_B h\) on the complement of the toric submanifolds \(\Sigma_a \subset S\) of the Sasakian link. Here \(\p_B\) denotes the basic \(\p\)-operator, see \cite{FOW}. Globally, we have the current equation
\[\rho^T - \sum_a (1-\beta_a)[\Sigma_a] = (n+1) d\eta + i \p_B\bar{\p}_B h  \]
on \(S\).
The existence of such a Ricci potential \(h\) is guaranteed if we assume that \(c_1^B - \sum_a (1-\beta_a) [\Sigma_a]_B \in \R \cdot [d\eta]_B \subset H^2_B(S)\). Indeed after a \(D\)-homothetic transformation one can always assume that the constant multiple of \([d\eta]_B\) is equal to \(n+1\). Finally, we note that \(c_1^B - \sum_a (1-\beta_a) [\Sigma_a]_B \in \R \cdot [d\eta]_B \subset H^2_B(S)\) is equivalent to \(c_1(H) = \sum_a (1-\beta_a) [\Sigma_a]\) as cohomology classes in \(H^2(S, \R)\).

The content of Proposition \ref{prop:RICPOT} can also be interpreted in terms of \emph{monotone labelled cones} as mentioned in Remark~\ref{r:monotone}. More precisely, we let $(C,\tilde{\ell})$ with labelling $\tilde{\ell_a}:=\beta_a^{-1}\ell_a$. We say that $(C,\tilde{\ell})$ is monotone if any of the following (equivalent) conditions holds
\begin{itemize}
	\item[(i)] there exists a ray $E_{\tilde{\ell}} \subset C$ such that $\forall y\in E_{\tilde{\ell}}$, $$ \langle y, \tilde{\ell}_1\rangle = \cdots = \langle y, \tilde{\ell}_d\rangle;$$ 
	\item[(ii)] the inward normals $\tilde{\ell}_1,\dots, \tilde{\ell}_d$ are contained in an affine hyperplane of $\kt$;
	\item[(iii)] $\exists \xi \in C_0^*$ such that $(P_\xi,\tilde{\ell})$ is a monotone labelled polytope;
	\item[(iv)] $\forall \xi \in C_0^*$, $(P_\xi,\tilde{\ell})$ is a monotone labelled polytope.
\end{itemize}
With this notion, the condition \(\beta \in \betaCONE\) is equivalent to $(C,\tilde{\ell})$ being monotone.

\subsection{Algebraic point of view} \label{sect:ALG}
We setup some standard algebraic geometry notation, following Cox-Little-Schenck's book \cite{Cox}.
Our toric K\"ahler cone \(\bar{X}\) is isomorphic to a complex affine variety
\begin{equation*}
\bar{X} = U_{\sigma} = \mbox{Spec}(\C[S_{\sigma}]) .
\end{equation*}
Here, \(\sigma = C^*\) and \(S_{\sigma}\) is the semigroup given by \(\Lambda^* \cap C\) where \(\Lambda^*\) is the dual lattice to the kernel of the exponential map \(\Lambda = \ker (\exp) \subset \mathfrak{t}\).\footnote{The standard algebraic notation (\cite{Cox}) is \(\Lambda = N\), \(\Lambda^* = M\). So \(\kt = N \otimes \R\), \(\kt^*= M \otimes {\R}\) and our complexified torus \(\bT \otimes \C\) agrees with \(T_N = (N \otimes \C) / N\).} 
Our affine toric variety \(\bar{X}\) corresponds to the fan that consists of the single cone \(\sigma\) and all of its faces. 

\begin{remark}
	This is the only section of the paper with purely algebro-geometric content.
	For simplicity of notation, in this section we will write \(X\) for the cone with the apex. Similarly, we also write \(D_a\) for the invariant Weil divisors, so \(o \in D_a\).
\end{remark}

\(X\) is a normal affine variety with an isolated singularity at \(\{o\}\), so \(K_X\) is well-defined as a Weil divisor, see \cite[Definition 8.0.20]{Cox}. We also have the toric Weil divisors \(D_1, \ldots, D_d\). The basic fact, \cite[Theorem 8.2.3]{Cox}, is that 
\[K_X = -\sum_{a=1}^d D_a , \]
meaning that  \(K_X + \sum_{a=1}^{d} D_a\) is the principal divisor of a meromorphic function. In particular, \(K_X + \sum_{a=1}^{d} D_a\) is always Cartier.

\begin{proposition} \label{CartierProp}
	The angle constraint expressed by Equation \eqref{anglecondition} is equivalent to the log canonical divisor
	\[K_X + \sum_{a=1}^{d}(1-\beta_a)D_a \]
	being \(\R\)-Cartier.
\end{proposition}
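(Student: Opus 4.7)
The plan is to reduce the statement to a direct application of the standard characterization of principal (hence Cartier) divisors on an affine toric variety, adapted to $\R$-coefficients.

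First, using the toric formula $K_{\bar{X}} = -\sum_{a=1}^d \bar{D}_a$ recalled above, I would rewrite
\[
K_{\bar{X}} + \sum_{a=1}^d (1-\beta_a) \bar{D}_a \;=\; -\sum_{a=1}^d \beta_a \bar{D}_a ,
\]
so the question becomes whether the $\R$-Weil divisor $\sum_a \beta_a \bar{D}_a$ is $\R$-Cartier. Next I would recall that the fan of $\bar{X} = U_{\sigma}$ consists of the single cone $\sigma = C^* \subset \kt$ and its faces, whose rays are precisely the half-lines generated by the primitive inward normals $v_a$; the toric prime divisor $\bar{D}_a$ corresponds to the ray through $v_a$.

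The key algebraic input is the following standard fact (see e.g.\ \cite[Ch.\ 4]{Cox}, extended from $\Z$- to $\R$-coefficients): on an affine toric variety $U_{\sigma}$, every Cartier divisor is principal, and an $\R$-Weil divisor $\sum_a a_a \bar{D}_a$ is $\R$-Cartier if and only if there exists $m \in \kt^* = \Lambda^* \otimes \R$ with
\[
\langle m, v_a \rangle = a_a \quad \text{for all } a = 1, \ldots, d.
\]
Applied to $\sum_a (-\beta_a) \bar{D}_a$, this says the log canonical is $\R$-Cartier iff there exists $m \in \kt^*$ with $\ell_a(m) = \langle m, v_a \rangle = -\beta_a$ for every $a$, equivalently iff the point $p := -m \in \kt^*$ satisfies $\ell_a(p) = \beta_a$ for all $a$.

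Finally, since $\beta_a > 0$, any such $p$ satisfies $\ell_a(p) > 0$ for all $a$, which by definition of $C$ places $p$ in the interior $C_0$. Hence the existence of $p$ is exactly the angle condition \eqref{anglecondition}, i.e.\ $\beta \in \betaCONE$. The only non-routine point is to make sure one is using the correct toric dictionary --- that the rays of $\sigma = C^*$ are generated by the $v_a$ (which is explicit in the paper from the formula $C^* = \{\sum_a \lambda_a v_a : \lambda_a \geq 0\}$) --- and to extend the $\Z$-coefficient version of the Cartier criterion to $\R$-coefficients, which is a direct linearity argument since, on a single cone, the Cartier data is a single character.
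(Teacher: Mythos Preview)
Your proposal is correct and follows essentially the same route as the paper's proof: both rewrite the log canonical divisor as $-\sum_a \beta_a \bar D_a$, invoke the standard criterion \cite[Theorem~4.2.8]{Cox} for an $\R$-Weil divisor on an affine toric variety to be $\R$-Cartier, and then observe that the resulting linear condition $\ell_a(p)=\beta_a$ together with $\beta_a>0$ forces $p\in C_0$. The only difference is cosmetic --- you spell out the extension to $\R$-coefficients and the identification of the ray generators of $\sigma$ a bit more explicitly than the paper does.
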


\begin{proof}
	Given real coefficients \(c_a\), the \(\R\)-divisor \(E = \sum_{a=1}^{d} c_a D_a\) is \(\R\)-Cartier if and only if, there is \(p \in \kt^*\) such that \(c_a = -\langle p, v_a \rangle\), where \(v_a \in \Lambda \subset \kt\) are the vectors corresponding to \(D_a\) in the fan description; see \cite[Theorem 4.2.8]{Cox}. In our notation, \(\ell_a = \langle \cdot, v_a \rangle\) are the linear  functions defining the facets of the moment cone \(C\) of \(X\) and \(\Lambda^* \otimes \R = \mathfrak{t}^*\). We are interested in the case where
	\begin{align*}
	E &= K_X + \sum_{a=1}^{d} (1-\beta_a) D_a \\
	&= - \sum_{a=1}^{d} \beta_a D_a .
	\end{align*}
	We conclude that the divisor \(E\) is \(\R\)-Cartier if and only if there is \(p \in \mathfrak{t}^*\) such that
	\(\beta_a = \ell_a(p)\) for every \(a=1, \ldots, d\). Since \(\beta_a>0\) we must necessarily have that \(p\) belongs to the interior of \(C \subset \mathfrak{t}^*\).
\end{proof}

Assume that \(K_X  + \sum_a (1-\beta)_a D_a \) is \(\R\)-Cartier and write \(\Delta = \sum_a (1-\beta)_a D_a\), then  \((X, \Delta)\) is said to be a log pair. Let \(\pi: Y \to X\) be a log smooth resolution, that is a proper birational morphism with \(Y\) smooth and \(\mbox{Exc}(\pi) \cup_a \pi^{-1}(D_a)\) a simple normal crossing divisor. Write \(\Delta'\) for the proper transform of \(\Delta\), that is \(\Delta' = \sum_a (1-\beta)_a D_a'\) with \(D_a'\) equal to the closure of \(\pi^{-1}(D_a \cap X^{reg})\), with \(X^{reg} = X \setminus \{o\}\). Write \(E_i\) for the irreducible divisors lying on the exceptional locus of \(\pi\), so \(\mbox{Exc}(\pi) = \cup_i E_i\). Since \(K_X + \Delta\) is \(\R\)-Cartier, we can pull-it back to the resolution and write
\begin{equation}
K_Y + \Delta' = \pi^*(K_X + \Delta) + \sum_i a_i E_i ,
\end{equation}
for some \(a_i \in \R\). The numbers \(a_i\) are the (log) discrepancies. The pair \((X, \Delta)\) is klt if the discrepancies satisfy \(a_i>-1\) for all \(i\). \footnote{We have assumed that \(\beta_a>0\) for all \(a\), so the coefficients of \(\Delta\) are always strictly less than one, and this avoids the log canonical case.}

\begin{proposition} \label{kltProp}
	If \(K_X  + \sum_a (1-\beta)_a D_a \) is \(\R\)-Cartier, then the pair \((X, \sum_a (1-\beta_a)D_a)\) is klt.
\end{proposition}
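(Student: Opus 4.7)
The plan is to reduce the klt condition to a positivity statement via the standard discrepancy computation on a toric log resolution, then deduce the positivity from the fact that the point $p$ given by Proposition~\ref{CartierProp} lies in the interior of the moment cone.

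First, invoking Proposition~\ref{CartierProp}, the $\R$-Cartier hypothesis produces a point $p$ in the interior of $C \subset \kt^*$ such that $\beta_a = \ell_a(p) = \langle p, v_a \rangle$ for every $a=1,\dots,d$. Equivalently, the $\R$-divisor $K_X + \sum_a(1-\beta_a)D_a = -\sum_a \beta_a D_a$ corresponds to the globally linear support function $\psi(\cdot) = -\langle p, \cdot \rangle$ on the fan cone $\sigma = C^* \subset \kt$.

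Next, I would choose a toric log resolution $\pi\colon Y\to X$ obtained by refining $\sigma$ to a smooth fan $\Sigma'$ (such a refinement always exists). The rays of $\Sigma'$ split into the original rays $v_1,\dots,v_d$ (which contribute the strict transforms $D'_a$) and new rays with primitive generators $u_1,\dots,u_N \in \sigma \setminus \{0\}$ (which contribute the exceptional divisors $E_i$). The toric boundary of $Y$ is simple normal crossing and contains both the exceptional locus of $\pi$ and the strict transform of $\Delta := \sum_a(1-\beta_a)D_a$.

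The key step is the discrepancy computation. Using $K_Y = -\sum_{\rho\in \Sigma'(1)} D_\rho$ and the fact that pullback of an $\R$-Cartier toric divisor is obtained by evaluating the same support function on the refined fan, one reads off
\[
a_i \;=\; \text{coeff}_{E_i}\!\bigl(K_Y + \Delta' - \pi^*(K_X+\Delta)\bigr) \;=\; -1 + 0 - \psi(u_i) \;=\; \langle p, u_i\rangle - 1.
\]
Since $u_i$ lies in $\sigma\setminus\{0\} = C^*\setminus\{0\}$ and $p$ is in the interior of $C = (C^*)^*$, cone duality gives $\langle p, u_i\rangle > 0$ (a point in the interior of $C$ cannot lie on any nontrivial supporting hyperplane $\{u_i = 0\}$). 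Therefore $a_i > -1$ for every exceptional divisor, which is exactly the klt condition for $(X,\Delta)$.

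The only mild subtlety is making sure one uses a toric log resolution so that both the exceptional divisors and the strict transform of the toric boundary are computed via the support function $\psi$; once that is set up, the proof is a one-line positivity check from cone duality, so there is no genuine obstacle.
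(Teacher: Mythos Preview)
Your proof is correct and follows essentially the same approach as the paper: both pass to a toric log resolution, identify the discrepancy at an exceptional ray with primitive generator $u$ as $\langle p, u\rangle - 1$, and conclude by showing $\langle p, u\rangle > 0$. The only cosmetic difference is that the paper verifies this positivity by writing $u = \sum_a \lambda_a v_a$ with $\lambda_a \geq 0$ not all zero, whereas you phrase it as cone duality between $C_0$ and $C^*\setminus\{0\}$; these are the same observation.
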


\begin{proof}
	Recall that, the fan of \(X\) consists of the single cone \(\sigma\) and all its faces, with \(\sigma = \mbox{Cone}(v_1, \ldots, v_d) \). The generating rays of \(\sigma\) are \(v_1, \ldots, v_d\) and we write \(\sigma(1) = \{v_1, \ldots, v_d\} \).
	We take a toric log smooth resolution \(\pi: Y \to X \). The variety \(Y\) is given by a fan \(\Sigma\) which is obtained by adding some vectors \(v'\) in the interior of the cone \(\sigma\). That is, the generating rays of \(\Sigma\) are given by \(v_1, \ldots, v_d\) together with some vectors \(v_i'\) lying on the interior of \(\sigma\). We write \(\Sigma(1) = \sigma(1) \cup \{v_i'\} \). The map \(\pi\) corresponds to the inclusion map of fans of \(\Sigma\) into \(\sigma\) and the \(v_i'\) correspond to the irreducible components \(E_i \subset Y\) of the exceptional divisor. 
	
	Let \(p \in C_0 \subset \mathfrak{t}^*\) be as in the proof of Proposition \ref{CartierProp}, so \(\langle p, v_a \rangle = \beta_a\) for \(a=1, \ldots, d\). 
	We have
	\begin{align*}
	\pi^*(K_X + \Delta) &= -\sum_{u \in \Sigma(1)} \langle p, u \rangle D_u \\ &= K_Y +  \sum_{u \in \Sigma(1)} (1-\langle p, u \rangle) D_u \\
	&= K_Y + \tilde{\Delta} + E -  \sum_{v'} \langle p, v' \rangle D_{v'} ;
	\end{align*}
	where \(\Delta' = \sum_{u \in \sigma(1)} (1-\langle p, u \rangle)) D_{u}\) is the strict transform of \(\Delta = \sum_{a=1}^d (1-\beta_a) D_a\) and \(E = \sum_{v'} D_{v'}\) is the exceptional divisor. Therefore,
	\begin{equation*}
	K_Y + \Delta' + E = \pi^*(K_X + \Delta) + \sum_{v'} \langle p, v' \rangle D_{v'} .
	\end{equation*}
	For each \(v' \in \Sigma(1) \setminus \sigma(1)\), we write \(v' = \sum_a \lambda_a v_a\) with \(\lambda_a \geq 0\). So, \(\langle p, v'\rangle = \sum_a \lambda_a \beta_a\). Since \(\beta_a>0\) for all \(a\), \(\lambda_a \geq 0\) for all \(a\) and not all \(\lambda_a\) vanish, we see that \(\langle p, v'\rangle >0\) for all \(v'\). We conclude that
	\begin{equation*}
	K_Y + \Delta' = \pi^*(K_X + \Delta) + \sum_{v'} a_{v'} D_{v'} ,
	\end{equation*}
	with \(a_{v'} = \langle p, v' \rangle -1 > -1\). Hence, the pair \((X, \Delta)\) is klt.
\end{proof}

Proposition \ref{kltProp} is a pair version of the well known fact that toric \(\Q\)-Gorenstein singularities are automatically klt.

\begin{remark}
	It follows from Proposition \ref{CartierProp} and Proposition \ref{kltProp} that, if \(\beta = (\beta_1, \ldots, \beta_d) \) belongs to the \(n+1\)-dimensional polytope \(\betaCONE \cap (0,1)^d\), where \(\betaCONE \subset \R^d\) is the angles' cone, then the pair \((X, \Delta)\)  is klt and \(\Delta = \sum_a (1-\beta_a)D_a\) is effective. 
	
	The above is an affine analogue of the fact that projective toric varieties are of `log Fano type', see \cite[Example 11.4.26]{Cox}. Indeed, let \(Z\) be a projective toric variety, for simplicity, assume it is smooth. Let \(L\) be an ample line bundle on \(Z\), so it gives a Delzant lattice polytope. Multyplying by a sufficiently large constant and translating we can assume that the origin is an interior point of the polytope, and this implies that \(L=\sum_{\rho}a_{\rho}E_{\rho}\) where \(E_{\rho}\) are the toric divisors, and \(a_{\rho}>0\). If \(\epsilon>0\) is small so that \(\epsilon a_{\rho} < 1\) for all \(\rho\), then \(\sum_{\rho}(1-\epsilon a_{\rho})E_{\rho} \) is effective and the pair \((Z, \sum_{\rho}(1-\epsilon a_{\rho})E_{\rho})\) is log Fano. The last assertion follows from the following
	\begin{align*}
	c_1(Z) - \sum_{\rho}(1-\epsilon a_{\rho})c_1(E_{\rho}) &= \epsilon \sum_{\rho} a_{\rho}c_1(E_{\rho}) \\
	&= \epsilon c_1(L) \\
	&> 0 .
	\end{align*}
	
\end{remark}

\subsection{Sasakian point of view}

 We denote by \([\Sigma_a] \in H^2(S, \R) \)  the Poincar\'e duals of the toric sumbanifolds \(\Sigma_a \subset S\) and by \(H = \xi^{\perp} = \ker(\eta)  \subset TS\)  the contact distribution, with first Chern class \(c_1(H) \in H^2(S, \R) \). 

\begin{proposition} \label{logc1PROP}
	 The angle constraint, given by Equation \eqref{anglecondition} are equivalent to
	\begin{equation}\label{logc1}
	c_1(H) = \sum_a (1-\beta_a)[\Sigma_a]
	\end{equation}
	as de Rham co-homology classes in \(H^2(S, \R)\).
	
\end{proposition}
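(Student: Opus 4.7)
The plan is to reduce the cohomological equivalence to two structural facts about the second cohomology of the link $S$: a Sasakian Chern-class formula and an explicit presentation of $H^2(S,\R)$ via the toric submanifolds $\Sigma_a$.

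First, I would establish the Sasakian analogue of the toric formula $-K_X = \sum_a D_a$ at the level of cohomology, namely $c_1(H) = \sum_a [\Sigma_a]$ in $H^2(S,\R)$. For this, restrict $TX$ to $S$ and use the topological splitting $TX|_S \cong H \oplus \C\cdot(r\partial_r + i\xi)$ where the trivial complex line summand is spanned by the Reeb and radial directions; this gives $c_1(TX|_S) = c_1(H)$. On $X$, the $\bT$-invariant holomorphic $(n+1,0)$-form $w_1^{-1}\cdots w_d^{-1}\, dw_1 \wedge \cdots \wedge dw_{n+1}$ (in appropriate logarithmic coordinates) exhibits $K_X$ as $\mathcal{O}(-\sum_a D_a)$, so $c_1(TX) = \sum_a[D_a]$ on $X$; restricting to $S$ and using that $[D_a]|_S = [\Sigma_a]$ finishes this step.

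Next, I would establish the short exact sequence
\begin{equation*}
0 \to \kt^* \xrightarrow{L} \R^d \xrightarrow{\pi} H^2(S,\R) \to 0,
\end{equation*}
where $\pi$ sends the $a$-th basis vector to $[\Sigma_a]$. Injectivity of $L$ is given (it was used in the Introduction). For the relations $\pi \circ L = 0$, I would construct for each $p \in \kt^*$ the invariant $1$-form $\alpha_p := \sum_i p_i\, d\theta_i$ on the free orbit $X_0$. Near $D_a$, choose local holomorphic coordinates with $w_a = |w_a|e^{i\theta_a}$ where $\theta_a = \langle v_a, \theta\rangle$; writing $\alpha_p = \ell_a(p)\, d\theta_a + (\text{smooth})$ near $D_a$ and applying Poincar\'e--Lelong to $d\theta_a = \tfrac{1}{2}d^c \log |w_a|^2$, one obtains the current identity $d\alpha_p = 2\pi \sum_a \ell_a(p)\, \delta_{D_a}$ on $X$. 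Restricting to $S$ shows that $\sum_a \ell_a(p)[\Sigma_a] = 0$, which is precisely the statement that $L(\kt^*) \subseteq \ker \pi$. That $\ker\pi$ is no larger reduces to the dimension count $b_2(S) = d - n - 1$, standard for smooth compact toric Sasaki manifolds and provable via the Leray spectral sequence for the torus fibration over the moment base or by the Delzant-type contact reduction description.

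Finally, combining the two steps,
\begin{equation*}
c_1(H) - \sum_a(1-\beta_a)[\Sigma_a] = \sum_a \beta_a[\Sigma_a],
\end{equation*}
so the class on the left vanishes in $H^2(S,\R)$ if and only if $(\beta_1,\dots,\beta_d) \in \mathrm{Image}(L)$, i.e.\ $\beta_a = \ell_a(p)$ for some $p \in \kt^*$. Since $\beta_a > 0$ forces $p \in C_0$, this is exactly Equation~\eqref{anglecondition}. The main obstacle is the second part of Step~2: while the explicit $1$-form construction yields the relations for free, checking that these are all the relations depends on the $b_2(S) = d-n-1$ computation, which is the least elementary ingredient and the point where the smoothness of the cross-section (Lerman's goodness condition on $C$) enters the argument.
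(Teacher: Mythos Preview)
Your proposal is correct and follows the same overall strategy as the paper: reduce the statement to $\sum_a \beta_a [\Sigma_a] = 0$ in $H^2(S,\R)$ via the identity $c_1(H) = \sum_a [\Sigma_a]$, exhibit the relations coming from $\kt^*$, and close with the dimension count $b_2(S)=d-n-1$. The paper's implementation differs in two places. First, rather than working directly in $H^2(S,\R)$, the paper passes through basic cohomology and the exact sequence $H^0_B \to H^2_B \to H^2(S,\R) \to H^1_B$ (invoking Lerman's finiteness of $\pi_1(S)$ to kill the last term and obtain $H^2_B(S)\cong H^2(S,\R)\oplus\R[d\eta]_B$); this detour is not strictly needed here but feeds into the proof of Proposition~\ref{c1basicPROP}. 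Second, the paper derives the relations $\sum_a \ell_a(p)[\Sigma_a]=0$ from the pluriharmonicity identity $\sum_a \langle p, v_a\rangle\, \partial\bar\partial \log\ell_a = 0$ (interpreting $\tfrac{i}{2\pi}\partial\bar\partial\log\ell_a$ as a curvature form for the line bundle of $D_a$), which is the Chern--Weil counterpart of your current equation $d\alpha_p = 2\pi\sum_a \ell_a(p)\,\delta_{\Sigma_a}$. Your route is a bit more economical. One small slip: in your holomorphic volume form the product should run over the $n{+}1$ torus coordinates, $\tfrac{dw_0}{w_0}\wedge\cdots\wedge\tfrac{dw_n}{w_n}$; this form then acquires a simple pole along each of the $d$ toric divisors, which is exactly the fact $K_X = -\sum_a D_a$ you want.
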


\begin{proof}
	The main ingredient is the following exact sequence, see \cite[Equation 7.2.1]{BGbook},
	\begin{equation}\label{exactsequence}
	H^0_B(S) \overset{\alpha}{\to} H^2_B(S) \overset{\imath}{\to} H^2(S, \R) \to H^1_B(S)
	\end{equation}
	where \(\alpha(a) = a [d\eta]_B \) and \(\imath [\cdot]_B = [\cdot] \). On the other hand \(H^1_B(S) \cong H^1(S, \R) \), see \cite[Proposition 7.2.3, item (v)]{BGbook}. A well known result of Lerman asserts that the fundamental group of \(S\) is finite, \(\pi_1(S) = \mbox{span}_{\Z}\{v_1, \ldots v_d\} / \Z^{n+1}\), so \(H^1(S, \R)\) vanishes and the last term in the sequence \eqref{exactsequence} is \(0\). We can then split the sequence and write
	\[ H^2_B(S) = H^2(S, \R) \oplus \R \cdot [d\eta]_B . \]
	 We recall  that \(\imath (c_1^B) = c_1 (H)\), see \cite[proof of Proposition 4.3]{FOW}. It follows that Equation \eqref{logc1} is equivalent to
	\begin{equation} \label{logc1-2}
	c_1^B - \sum_a (1-\beta_a)[\Sigma_a]_B \in \R \cdot [d\eta]_B .
	\end{equation}
	On the other hand, it follows from the formula for the transverse Ricci curvature, that
	\[c_1^B = \sum_a[\Sigma_a]_B ,\]
	which is a Sasakian analogue to a well-known toric formula. It follows that Equation \eqref{logc1-2} is equivalent to
	\begin{equation}
	\sum_a \beta_a [\Sigma_a]_B \in \R \cdot [d\eta]_B .
	\end{equation}
	Consider the linear map \(L\) from the vector space \(\oplus_a \R \cdot [\Sigma_a] \cong \R^d\) to \(H^2_B / \R \cdot [d\eta]_B \cong \R^{d-n-1}\) that sends \((\alpha_1, \ldots, \alpha_d)\) to \(\Pi \circ (\sum_a \alpha_a [\Sigma_a]_B) \) where \(\Pi : H^2_B \to H^2_B / \R \cdot[d\eta]_B \) is the quotient projection. The conclusion is that Equation \eqref{logc1} is equivalent to \((\beta_1, \ldots, \beta_d)\) belong to the kernel of the above linear map. Because the linear map is surjective, the dimension of its kernel is equal to \(n+1\). 
	
	Recall that \(\bar{\p} \p \log \ell_a \) can be interpreted as the curvature of a smooth Hermitian metric on the line bundle associated to \(D_a \subset X\), see \cite{guillMET}. (Here we are working on the outside the apex, so \(X\) and \(D_a\) are smooth.) We can restrict this line to a complex line bundle \(L_a\) on \(S\). The pull-back of \(\frac{i}{2\pi} \dd \log \ell_a \) by the inclusion \(S \subset X\) gives a representative of \( c_1(L_a) \cong [\Sigma_a] \in H^2(S, \R) \), see \cite{FOW} for a reference on Chern classes of basic complex vector bundles. On the other hand, if \(p \in \kt^*\) then 
	\begin{equation} \label{eq:DDBAR}
	\sum_a \langle p, v_a \rangle \dd \log \ell_a = 0 
	\end{equation}
	on \(X\). Indeed, we let
	\(G = \frac{1}{2} \sum_a \ell_a \log \ell_a\), with \(\ell_a = \langle v_a, \cdot \rangle \); so
	\begin{align*}
	y &= DG(x) \\ 
	&=\frac{1}{2} \sum_a v_a \left( 1 + \log \ell_a(x) \right) .
	\end{align*}
	Given some fixed \(p \in \R^{n+1}\), the affine linear function of the \(y\) coordinates \(\langle p, y \rangle \) is given by taking inner product with \(p\) in the above equation:
	\begin{equation*}
	2 \langle p, y \rangle = \sum_a \langle p, v_a \rangle \log \ell_a(x) + C ,
	\end{equation*}
	with constant \(C = \sum_a \langle p, v_a \rangle \). In particular, since affine linear functions of \(y\) are pluri-harmonic, \eqref{eq:DDBAR} holds.
	Restricting Equation \eqref{eq:DDBAR} to \(S\), we see that \(\sum_a \beta_a [\Sigma_a] = 0\) in \(H^2(S, \R)\) (equivalently \(\beta \in \ker(L)\)) if there is some \(p \in \kt^*\) such that \(\beta_a = \ell_a(p)\) for \(a=1, \ldots, d\).
	By dimension counting, we conclude that if \(\beta \in \ker(L)\) then there must be some \(p \in \mathfrak{t}^*\) such that \(\beta_a=\ell_a(p)\) for  all \(a=1, \ldots, d\). Hence, the lemma follows.
	

\end{proof}

\begin{remark}
	The above proof shows that Equation \eqref{logc1} is equivalent to \(\sum_a \beta_a [\Sigma_a]=0\) in \(H^2(S, \R)\).
	It is well known that if \(M\) is a compact toric manifold, then \(H^2(M, \R)\) is generated by the Poincar\'e duals of its toric divisors subject to the relations \(\sum_a \langle v_a, p \rangle [D_a]=0\) for every \(p \in \mathfrak{t}^*\). The above argument gives a  Sasakian analogue, showing that \(H^2(S, \R)\) is generated by the Poinacr\'e duals of \(\Sigma_a\) subject to \(\sum_a \langle v_a, p \rangle [\Sigma_a]=0\) for every \(p \in \mathfrak{t}^*\). 
\end{remark}

We now prove that if the logarithmic first Chern class of the contact distribution vanishes, then the logarithmic basic first Chern class is automatically positive. This is a special feature of the toric set up, and it can be considered as a Sasakian analogue of Proposition \ref{kltProp}.

\begin{proposition} \label{c1basicPROP}
	If Equation \eqref{logc1} holds, then
	\begin{equation}
			c_1^B - \sum_a (1-\beta_a)[\Sigma_a]_B >0 .
	\end{equation}
\end{proposition}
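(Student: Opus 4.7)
The plan is to identify the real scalar $c$ in the cohomological identity $\sum_a \beta_a[\Sigma_a]_B = c\,[d\eta]_B$ produced by Equation~\eqref{logc1}, and to prove $c>0$ by a direct volume integration, independent of any Calabi-Yau existence statement.

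First, I would revisit the proof of Proposition~\ref{logc1PROP}: the toric identity $c_1^B = \sum_a[\Sigma_a]_B$ rewrites
\[ c_1^B - \sum_a(1-\beta_a)[\Sigma_a]_B \;=\; \sum_a \beta_a [\Sigma_a]_B, \]
and Equation~\eqref{logc1} places this in the kernel of $\imath:H^2_B(S)\to H^2(S,\R)$. The splitting $H^2_B(S)\cong H^2(S,\R)\oplus\R\cdot[d\eta]_B$ established there then yields a unique $c\in\R$ with $\sum_a\beta_a[\Sigma_a]_B=c\,[d\eta]_B$, and the positivity statement of the proposition is exactly $c>0$.

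Next, I would extract $c$ by pairing both sides with the basic $(2n-2)$-form $(d\eta)^{n-1}/(n-1)!$ and integrating against $\eta$ on $S$. The map $\alpha\mapsto\int_S \alpha\wedge\eta$ descends to top-degree basic cohomology: for basic $\gamma$ of degree $2n-1$, the product $\gamma\wedge d\eta$ is a top-degree form annihilated by $\iota_\xi$, while $\iota_\xi(\eta\wedge(d\eta)^n)\neq 0$, so $\gamma\wedge d\eta=0$ and Stokes yields $\int_S d\gamma\wedge\eta = -\int_S\gamma\wedge d\eta=0$. Standard basic Poincar\'e duality for the closed $\xi$-invariant submanifolds $\Sigma_a\subset S$ then gives
\[ \int_S [\Sigma_a]_B\wedge\frac{(d\eta)^{n-1}}{(n-1)!}\wedge\eta \;=\; \int_{\Sigma_a}\frac{(d\eta)^{n-1}}{(n-1)!}\wedge\eta\big|_{\Sigma_a}, \]
which is a positive multiple of $\mathrm{vol}(\Sigma_a)$, while pairing $[d\eta]_B$ against the same form yields $\int_S (d\eta)^n/(n-1)!\wedge\eta$, a positive multiple of $\mathrm{vol}(S)$. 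The resulting identity has the shape
\[ \sum_a \beta_a\,\mathrm{vol}(\Sigma_a) \;=\; c\cdot K\cdot\mathrm{vol}(S) \]
with $K>0$ a fixed dimensional constant, so $c>0$ follows at once from $\beta_a>0$ and the positivity of the Sasaki volumes.

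The main (mild) obstacle is verifying that the top-degree pairing $\alpha\mapsto\int_S\alpha\wedge\eta$ really descends to basic cohomology in this conical setting, and that basic Poincar\'e duality applies to the $\Sigma_a$ with the twist by $\eta$; both are standard consequences of the $\xi$-invariance of $\Sigma_a$ and of the identity $\gamma\wedge d\eta=0$ for basic $\gamma$ of degree $2n-1$ noted above. No further analytic input is required.
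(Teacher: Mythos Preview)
Your proof is correct and follows essentially the same approach as the paper's own argument: both identify the class as a multiple $\tau[d\eta]_B$, then pair against $(d\eta)^{n-1}\wedge\eta$ and integrate over $S$ to show $\tau>0$ by positivity of the resulting volumes. The only cosmetic difference is that the paper phrases the left-hand side as the total transverse scalar curvature (via Equation~\eqref{e:totScal=intBND}), whereas you invoke basic Poincar\'e duality to reach the same sum $\sum_a\beta_a\,\mathrm{vol}(\Sigma_a)$ directly; your worry about the ``conical setting'' is unnecessary, since basic cohomology and the classes $[\Sigma_a]_B$ depend only on the smooth Sasaki structure and the Reeb foliation, not on any singular transverse metric.
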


\begin{proof}
	Combining the exact sequence \eqref{exactsequence}, together with Equation \eqref{logc1}, we get that
	\[	c_1^B - \sum_a (1-\beta_a)[\Sigma_a]_B = \tau [d \eta]_B ,\]
	for some \(\tau \in \R\). The fact that \(\tau>0\) follows by taking the wedge product with \((d\eta)^{n-1} \wedge \eta\) integrating over \(S\). Up to positive dimensional factors, we have: 
	\begin{itemize}
		\item The l.h.s. is the transverse scalar curvature, which is positive and given by the volume of the boundary of the cross section polytope determined by the Reeb vector.
		\item The r.h.s. is \(\tau \mbox{vol}(S)\) .
	\end{itemize}
	We deduce from the above two bullets that \(\tau>0\).
	
	Alternatively, we can also argue as follows. If Equation \eqref{anglecondition} holds, then the compact polytope \(P_{\xi}\) labelled by \(\beta_a^{-1}\ell_a\) is monotone. The constant \(\tau\) is given by evaluating \(\beta_a^{-1} \ell_a\) at some interior point of \(P_{\xi}\), so this number can only be positive.
	
\end{proof}

\section{Examples}

In this section we provide examples and compare our work with previously known results about  K\"ahler-Einstein metrics on toric manifolds with conical singularities, \cite{RKE_legendre}, \cite{Datar}.

We make explicit the cone angle constraints expressed by Equation \eqref{anglecondition}.
In order to determine the beta cone \(\betaCONE \subset \R^d\), in the general context of Theorem \ref{MAINTHM}, we consider the linear map \(\R^{n+1} \to \R^d\) given by the \(d\times(n+1)\) matrix \(A\) whose \(d\) rows are the vectors \(v_1^T, \ldots, v_d^T\). Then \(\betaCONE = \mbox{Image}(A) \cap \R^d_{>0}\). In practice, we find a basis \(\{\eta_1, \ldots, \eta_{d-n-1}\}\) for the kernel of \(A^T = (v_1, \ldots, v_d)\) and then \(\betaCONE\) is the intersection of the positive octant \(\R^{d}_{>0}\) with the linear space given by \(\langle \beta, \eta_i \rangle =0\) for \(i=1, \ldots,d-n-1\). The condition that \(\bar{X}\) admits a smooth Calabi-Yau cone metrc, that is the affine toric singularity \(\bar{X}\) is \(Q\)-Gorenstein, is equivalent to \(\R_{>0} \times (1, \ldots, 1) \in \betaCONE\), in other words the image of \(A\) contains the ray where all the entries are equal.

\subsection{Cones over projective toric varieties}
Let \((M, L)\) be a compact polarized toric manifold, so it corresponds to a lattice Delzant polytope 
\[ P = \cap_{a=1}^d \{ \bl_a \geq 0 \} \subset \R^n , \]
where \(\bl_a = \langle \bv_a, \cdot \rangle + \lambda_a\) with \(\bv_a \in \Z^n\) the primitive inward normals. Under this correspondence,
\[ L = \sum_a \lambda_a \bD_a , \] 
where \(\bD_a\) are the toric divisors corresponding to \(\bl_a\). We consider the cone over \(P\), that is the affine toric manifold \(\bar{X}\) whose moment cone  is
\begin{equation*}
C = \{(p, s) \in \R^{n+1} |\, s \geq 0, \,\,  p \in sP \} .
\end{equation*}
As an algebraic variety, \(\bar{X}\) is the Spec of the \(\C\)-algebra of the semi-group \(C \cap \Z^{n+1}\), and it is isomorphic to the total space of the dual of \(L\) with its zero section contracted to a point, \((L^*)^{\times}\). The facet normals to \(C\) are
\[v_a= (\bv_a, \lambda_a) . \]
A well-known family of examples is when \(P\) is the anit-canonical polytope of a Fano variety, in which case \(P\) is reflexive and we can take \(\lambda_a=1\) for all \(a\). Then, the singularity \(\bar{X}\) is Gorenstein and it admits a smooth Calabi-Yau cone metric by \cite{FOW}. But in general, there is no reason for the vectors \(v_a\) to lie on a hyperplane, which correspond to the \(\Q\)-Gorenstein condition, necessary to admit  smooth Calabi-Yau cone metrics.

For any polytope \(P\) as above, not necessarily reflexive, there is one parameter family of toric K\"ahler-Einstein metrics \(\omega_t\), for \(t>0\), with cone angles \(2\pi t \beta_a\) along \(\bD_a\) and
\begin{equation*}
\beta_a = \bl_a (\hat{p}) , \hspace{2mm} \mbox{with } \hat{p} = \mbox{Barycenter}(P) .
\end{equation*}
The K\"ahler-Einstein equation is
\begin{equation*}
\mbox{Ric}(\omega_t) = \alpha_t \omega_t + \sum_a (1- t\beta_a)[\bD]_a .
\end{equation*}
We recall that, up to \(2\pi\) factors, the class of \(\mbox{Ric}(\omega_t)\) is \(c_1(M)\) and it is equal to \(\sum_a [\bD]_a\). On the other hand, for any \(p \in \R^2\) we have \(\sum_a \bl_a(p) [\bD]_a = c_1(L) \). We conclude that \(\alpha_t = t\). The supremum of \(t>0\) such that \(t\beta_a \leq 1\) for all \(a\) is the invariant \(R(M, L)\) studied by Datar-Guo-Song-Wang \cite{Datar}. We can lift these conical K\"ahler-Einstein metrics on the polytope \(P\) to regular Calabi-Yau cone metrics on \(\bar{X}\) with cone angles \(2\pi\beta_a\) along \(D_a\). Recall that \(\xi \in \mathfrak{t}_+\) defines a cross section \(\{\langle y, \xi \rangle = n+1\}\), so this cross section is equal to \(P \times \{1\}\) for \(\xi = (0, n+1)\). Note that \(\R_{>0} \cdot (0,1) \subset \mathfrak{t}_+\). The Reeb/cone angle correspondence restricted to this ray of regular structures is then
\[ t(\beta_1, \ldots, \beta_d) = t (\bl_1(\hat{p}), \ldots, \bl_d(\hat{p})) \leftrightarrow t^{-1}(0,n+1) .\]

\subsection{Three dimensional cones}
Consider first the case when \(\bar{X} = (K_M)^{\times}\) is the canonical bundle of a smooth del Pezzo surface \(M\) with its zero section contracted at the apex. 
The corresponding \(M\) is either \(\mathbb{CP}^2\), \(\mathbb{CP}^1 \times \mathbb{CP}^1\) or the blow ups of the projective plane at either one, two or three points: \(dP_1, dP_2, dP_3\). All the corresponding \(\bar{X}\) admit smooth Calabi-Yau cone metrics, the structure is irregular in the \(dP_1\) and \(dP_2\) cases and a regular lift of a smooth K\"ahler-Einstein metric on the base in the remaining cases. Since \(K_{\bar{X}}\) is Cartier in all these cases, the beta cone is detrmined by the requirement that \(\sum_a \beta_a \bD_a\) is \(\R\)-Cartier. 
The list of reflexive polygons, taken from \cite[Section 8.3]{Cox}, and associated cones goes as follows. 
\begin{enumerate}
	\item \(M = \mathbb{CP}^1 \times \mathbb{CP}^1\), then
	\[\mbox{vertices of } P = \{ (-1,-1), (-1,1), (1,1), (1,-1) \} .\]
	In the following we use \(\sigma = C^*\) and \(\sigma(1)\) is the set of facet normals to \(C\), equivalently, the ray generators of \(\sigma\).
	\[ \sigma(1) = \{ v_1 = ( 1,  0, 1), v_2 =( 0,  1, 1), v_3 =(-1,  0, 1), v_4=( 0, -1, 1) \} \]
	\[\mbox{beta cone } B= \{\beta_1 + \beta_3 = \beta_2 + \beta_4 \}  .\]
	The fan of \(\bar{X}\) is consists of the single cone \(\sigma\) with generating rays \(v_1, \ldots, v_4\). There is a sub-cone \(\sigma' \subset \sigma\) with generating rays 
	\[\{ v_1' = ( 0,  0, 1), v'_2 =( 0,  1, 1), v'_3 =(1,  1, 1), v'_4=( 1, 0, 1) \} . \]
	The toric variety associated to \(\sigma'\) is known as the conifold, or \(T^{1,1}\)-singularity,
	\[ \mathcal{C} = U_{\sigma'} = \{UV=ZW\} \subset \C^4 . \]
	The variety \(\mathcal{C}\) is isomorphic to the total space of \(\mathcal{O}(-1, -1)\) over \(\mathbb{CP}^1 \times \mathbb{CP}^1\) with its zero section contracted to a point. The inclusion \(\sigma' \subset \sigma\) realizes a two-fold covering map and \(\bar{X} = \mathcal{C} / \Z_2\). The base of the conifold, \(S = \mathcal{C} \cap S^7\), is identified with \(S^2 \times S^3\). The K\"ahler-Einstein product metric on \(\mathbb{CP}^1 \times \mathbb{CP}^1\) lifts to a smooth homogeneous Sasaki-Einstein metric, which leads to a regular Calabi-Yau cone on \(\mathcal{C}\), known as the Stenzel metric. More generally, the ray of regular Calabi-Yau cone metrics on \(\mathcal{C}\) with equal cone angle \(2\pi\beta\) along the four toric divisors, are lifts of \(\mathbb{CP}_{\beta} \times \mathbb{CP}_{\beta} \). Here we have used \(\mathbb{CP}_{\beta}\) to denote the Riemann sphere endowed with the rugby ball metric with cone angles \(2\pi\beta\) and polarized by \(\mathcal{O}(1)\) (area = \(2\pi\)), its symplectic potential is
	\begin{equation*}
	\beta^{-1} y \log y + \beta^{-1} (1-y) \log(1-y) .
	\end{equation*}
	
	The link \(S\) of \(\mathcal{C}\) can be identified with the set of unit vectors on \(\mathcal{O}(-1,-1)\) on each factor. Write \(\Sigma_i = D_i \cap S\) for the real codimension two submanifolds of \(S\) given by the intersection of the four toric divisors with the link. Each \(\Sigma_i\) is diffeormorphic to \(S^3\). We have \(H^2(S) = \R\), and each of the Poincar\'e duals of \(\Sigma_i\) is a generator. Moreover \([\Sigma_1] = [\Sigma_3] \), \([\Sigma_2] = [\Sigma_4]\) and \([\Sigma_1] = -[\Sigma_2]\). On the other hand, \(c_1(H) =0\), where \(H \subset TS\) is the contact distribution. We see that the requirement associated to the beta cone condition: \(\beta_1 + \beta_3 = \beta_2 + \beta_4\); is equivalent to \(c_1(H) = \sum_i (1-\beta_i)[\Sigma_i] \).
	
	Up to a constant factor, the volume functional \(\mbox{vol}(\Delta_{\xi})\) in \(\sigma'\), with coordinates \((\xi_1,\xi_2,\xi_3)\), is equal to
	\[\frac{\xi_3}{\xi_1 \xi_2(\xi_3-\xi_2)(\xi_3-\xi_1)} . \]
	Given \(\beta = \beta(p) \in \betaCONE\), the Reeb vector is given by minimizing \(\mbox{vol}(\Delta_{\xi})\) on \(C^* \cap \{\langle \xi, p  \rangle =3\} \).

	\item \(M=dP_1\), the blow up of \(\mathbb{CP}^2\) at one point, then
	\[\mbox{vertices of } P = \{ (-1,-1), (-1,1), (0,1), (2,-1) \} ,\]
	\[ \sigma(1) = \{ v_1 =(1,  0,  1), v_2 =(0, -1,  1), v_3 =(-1, -1,  1), v_4 =(0,  1,  1)
	\} \]
	\[\mbox{beta cone } \betaCONE= \{2\beta_1 + 2\beta_3 = 3\beta_2 + \beta_4 \}  .\]

	The barycenter of \(P\) is located at 
	\[P_c = \frac{1}{4} \left(\frac{1}{3}, -\frac{2}{3} \right) .\]
	The affine linear functions defining \(P\) are
	\[\bl_1 = x + 1, \hspace{1mm} \bl_2 = -y + 1, \hspace{1mm} \bl_3 = -x-y+1, \hspace{1mm} \bl_4 = y + 1 . \]
	Evaluating at the barycenter, \(\beta_a = \bl_a(P_c) \), we have
	\[\beta_1 = \frac{13}{12}, \hspace{1mm} \beta_2 = \frac{7}{6}, \hspace{1mm} \beta_3 = \frac{13}{12}, \hspace{1mm} \beta_4 = \frac{5}{6} . \]
	The vector \(\beta = (13/12, 7/6, 13/12, 5/6) \)  satisfies \(2\beta_2 + 2\beta_3 = 3\beta_2 + \beta_4 \).
	The family of K\"ahler-Einstein metrics \(\omega_t\) on \(M\), which solve
	\[\mbox{Ric}(\omega_t) = t \omega_t + \sum_a (1-t\beta_a)[\bD_a] , \]
	lifts to a family of regular Calabi-Yau cone metrics on \(\bar{X}\) with Reeb vector field and cone angles related by 
	\[ \xi = t^{-1}(0,0,3) \leftrightarrow t \beta . \]
	The `upper Ricci lower bound invariant' \(R=R(M, K_M)\) is the supremum of all \(t>0\) such that the entries of \(t\beta\) are all \(\leq 1\), equivalently \(R = (\max_a \beta_a)^{-1}\). In the case \(M= dP_1\), we easily see that \(R=6/7\). On the other hand, the ray of angles \(\R_{>0} \cdot (1,1,1,1) \) corresponds to a ray of irrational vector fields, see \cite{MSY}, which give rise, when all cone angles are equal to \(2\pi\), to a smooth irregular Sasaki-Einstein metric.

	\item \(M=dP_2\), the blow up of \(\mathbb{CP}^2\) at two points, then
	\[\mbox{vertices of } P = \{ (-1,-1), (-1,1), (0,1), (1,0), (1,-1) \} ,\]
	\[ \sigma(1) = \{ v_1, \ldots, v_5
	\} \]
	with
	\begin{align*}
	v_1 = (1,  0,  1), \hspace{1mm} v_2 = (0, -1,  1), \hspace{1mm} v_3 = (0,  1,  1) \\
	v_4 =  (-1,  0,  1), \hspace{1mm} v_5 = (-1, -1,  1)
	\end{align*}
	\[\betaCONE= \{\beta_1 + 3\beta_4 = 2\beta_3 + 2\beta_5, \hspace{1mm} \beta_2 + 2 \beta_4 = \beta_3 + 2 \beta_5 \}  .\]
	The barycenter of \(P\) is located at 
	\[P_c = \frac{2}{7} \left(-\frac{1}{3}, -\frac{1}{3} \right) .\]
	The affine linear functions defining \(P\) are
	\[\bl_1 = x + 1, \hspace{1mm} \bl_2 = -y + 1,\]
	\[\hspace{1mm} \bl_3 = y+1, \hspace{1mm} \bl_4 = -x + 1, \bl_5 = -x-y+1 . \]
	Evaluating at the barycenter, \(\beta_a = \bl_a(P_c) \), we have
	\[\beta_1 = \frac{19}{21}, \hspace{1mm} \beta_2 = \frac{23}{21}, \hspace{1mm} \beta_3 = \frac{19}{21}, \hspace{1mm} \beta_4 = \frac{23}{21}, \hspace{1mm} \beta_5 = \frac{25}{21} ; \]
	and \(R=21/25\). The same commentaries as in the \(dP_1\) example apply.
	
	\item \(M=dP_3\), the blow up of \(\mathbb{CP}^2\) at three points, then
	\[\mbox{vertices of } P = \{ (-1,-1), (-1,0), (0,1), (1,1), (1,0), (0,-1) \} ,\]
	\[ \sigma(1) = \{ v_1, \ldots, v_6
	\} \]
	with
	\begin{align*}
	v_1 = (1,  0,  1), \hspace{1mm} v_2 = (0, 1,  1), \hspace{1mm} v_3 = (1,  -1,  1) \\
	v_4 =  (-1,  1,  1), \hspace{1mm} v_5 = (-1, 0,  1), \hspace{1mm} v_6=(0,-1,1) .
	\end{align*}
	\[\betaCONE= \{\beta_1 + 3\beta_5 = 2\beta_4 + 2\beta_6, \hspace{1mm} \beta_2 + 2 \beta_5 = 2\beta_4 +  \beta_6, \hspace{1mm} \beta_3 + 2\beta_5 = \beta_2 + 2 \beta_6 \}  .\]
	The barycenter of \(P\) is located at zero and \(M\) admits a family of conical K\"ahler-Einstein metrics \(\omega_t\) with cone angles \(t\cdot(1, \ldots,1)\), smooth at \(t=1\). The \(\omega_t\) lift to regular Calabi-Yau cone metrics on \(X\) with cone angles \(2\pi t\) along the toric divisors and Reeb vector \(t^{-1}(0,0,3)\).
\end{enumerate} 

In the previous examples, we have considered the anti-canonical polarization and \(\bar{X}=(K_M)^{\times}\). However, we can still apply Theorem \ref{MAINTHM} to any polarization \((M,L)\) and \(\bar{X}=(L^{-1})^{\times}\). In general, if \(L\) is not a positive rational multiple of the anti-canonical, then the singularity \(\bar{X}\) is not \(\mathbb{Q}\)-Gorenstein, so its beta cone \(\betaCONE\) does not contain the ray \(\R_{>0} \cdot (1, \ldots, 1)\). Here are a few examples:

\begin{enumerate}
	\item If \(a\) and \(b\) are positive integers, with \(a< b\), we can take \(L=\mathcal{O}(a, b)\) over \(\mathbb{CP}^1 \times \mathbb{CP}^1\). Its moment polytope is the rectangle \(P=[0,a]\times[0,b]\) and \(C = \mbox{Cone}(P \times \{1\})\) is the moment cone of \(\bar{X}\), one easily checks that the facet normals of \(C\) do not lie on a hyperplane. The family of regular structures, with Reeb vector \(t^{-1}\cdot(0,0,3)\) and cone angles \(2\pi t  (a/2, b/2, a/2, b/2) \), are lifts of products of two Rugby ball metrics, \(\mathbb{CP}_{\beta_1} \times \mathbb{CP}_{\beta_2}\) with \(\beta_1 = ta/2\) and \(\beta_2 = tb/2\) and polarizations \(\mathcal{O}(a)\) on the first factor and \(\mathcal{O}(b)\) on the second.

	\item Let \(M\) be the blow up of \(\mathbb{CP}^2\) at one point and take as a moment polytope \(P\) the convex hull of \((1,0), (1,1), (2,2), (2,0)\) with inward normals \(\bv_1 = (1,0), \bv_2 = (1,-1), \bv_3 = (-1,0), \bv_4 = (0,1)\). The Picard group of \(M\) has rank two, it is generated by \(\bD_1, \ldots, \bD_4\) subject to  relations \(\bD_1 + \bD_2 = \bD_3\) and \(\bD_2 = \bD_4\).
	The anti-canonical class is \(-K_M = \sum_a \bD_a = 2 \bD_1 + 3\bD_3\). On the other hand, the polarization on \(M\) associated to \(P\), is \(L= \bD_1 + 2\bD_2\). There is a family of K\"ahler-Einstein metrics \(\omega_t\), in the cohomology class \(2\pi c_1(L)\), with \(\mbox{Ric}(\omega_t) = t \omega_t\) on \(M \setminus \cup_a \bD_a \cong (\C^*)^2\) and cone angles \(t (5/9,7/9,4/9,7/9)\). In particular, at \(t=9/7\) (which is the invariant \(R(M, L)\)), the cone angles are \((5/7,1,4/7,1)\). The metrics \(\omega_t\) can be written explicitly, see \cite[Section 6.4]{RKE_legendre}, and therefore so do their regular lifts to \(X\).
	
\end{enumerate}

There is no need for \(M\) to be Fano, so we can also consider polarized Hirzebruch surfaces \((M, L)\) with \(M = F_a = \mathbb{P}(\mathcal{O} \oplus \mathcal{O}(a))\). In general, \(\bar{X}\) does not have to be \((L^{-1})^{\times}\) with \(L\) an ample line bundle over a smooth compact toric surface. For example, the affine toric varieties \(\bar{X}\) corresponding to the \(Y^{p,q}\) singularities, see \cite{MSY_toric}. These are Gorenstein and admit smooth Calabi-Yau cone metrics, our Theorem \ref{MAINTHM} realizes each of these metrics as a member in a three dimensional family by introducing cone angles.
Following \cite{MSY_toric}, the facet normals to the moment cone of the \(Y^{p,q}\) singularity are
\[ \sigma(1) = \{ v_1 = ( 1,  0, 0), v_2 =( 1,  p-q-1, p-q), v_3 =(1,  p, p), v_4=( 1, 1, 0) \} \]
and  \(B\) given by \((p+q)\beta_1 + (p-q)\beta_3 = p\beta_2 + p \beta_4\).

\subsubsection{Explicit formula for symplectic potentials}
 As a final remark we give a recipe to get explicit toric conically singular Calabi-Yau metric on K\"ahler cone whose moment cone has $4$ facets. Recall that all the K\"ahler--Einstein potentials on convex quadrilateral are explicitely known by Apostolov--Calderbank--Gauduchon \cite{ACG:weakly} and the second author \cite{TGQ}. To write down the explicit formula one needs to put suitable coordinates on the quadrilateral that depends on the type (trapezoids, parallelogram or generic) of the quadrilateral. Thus, given any Calabi-Yau cone metric as in Theorem \ref{MAINTHM} with a four faced good moment cone the associated potential on the tranversal polytope has no choice to fall into the category of metrics studied by \cite{ACG:weakly}.     
On the other hand, we note that any two strictly convex four faced cones in \(\R^3\) are equivalent under \(SL(3, \R)\). Thus, the symplectic potential of a Calabi-Yau cone metric as in Theorem \ref{MAINTHM} with a four faced good moment cone can be realized (on the complement of the invariant divisors) as one of KE potential the family of a fixed cone $C$ (with base a square, say) and various labelling. Let's do an example in detail. To simplify assume the Calabi-Yau metric we started with on $X$ is a smooth one (i.e $\beta_1=\dots=\beta_4=1$) and the moment cone, image of $\mu: X\ra \R^3$, is some four face strictly convex cone $\tilde{C}\subset\R^3$ with inwards primitive normals $w_1, \dots,w_4 \in Z^3$. There is a unique automorphism $\phi\in SL(3, \R)$ sending $\tilde{C}$ to $C := \{ (x_0,x_1,x_2)\in \R^3\,|\, x_0 \pm x_1 \pm x_2 \geq 0\}$ and then $(\phi^{-1})*w_a = r_a v_a$ for some $r_a >0$ where the $v_1,\dots, v_4\in \Z^3$ are the unique inwards primitive normals to $C$. Then the symplectic potential associated to the Calabi-Yau metric on $X$ (with respect to the moment map $\phi\circ \mu$ and associated action-angle coordinates) is the only KE potential in $\mS_\xi(C, \ell,\beta)$  where $\beta=(r_1^{-1},\dots, r_4^{-1}) \in \betaCONE$, $\ell_a=v_a$ and $\xi$ is the Reeb vector field. Moreover, this KE potential can be written down explicitly using Hamiltonian 2-form coordinates~\cite{ACG:weakly,TGQ}.


	


\bibliographystyle{abbrv}

\end{document}